\documentclass[11pt]{article}

\usepackage{amsmath,amsthm,verbatim,amssymb,amsfonts,amscd, graphicx}
\usepackage{graphicx}
\usepackage{mathrsfs}
\usepackage{enumerate}
\usepackage{mathtools}
\usepackage{lipsum}         
\usepackage{xargs}                      
\usepackage{enumitem}
\usepackage{authblk}
\usepackage{cite}
\usepackage{hyperref}
\usepackage{varioref}  
\usepackage{cleveref}  
\usepackage{bbm}
\usepackage{float}
\usepackage[pdftex,dvipsnames]{xcolor}  
\usepackage{caption}
\usepackage{subcaption}
\usepackage{siunitx}
\usepackage[colorinlistoftodos,prependcaption,textsize=tiny]{todonotes}

\usepackage[a4paper, left=2.75cm, right=2.75cm, top=2.0cm, bottom=2.0cm]{geometry}

\theoremstyle{plain}

\newtheorem{lemma}{Lemma}

\newtheorem{proposition}{Proposition}

\theoremstyle{definition}
\newtheorem{definition}{Definition}

\usepackage{bm}

\newcommand{\N}{\mathbb{N}}
\newcommand{\C}{\mathbb{C}}
\newcommand{\R}{\mathbb{R}}

\newcommand{\Gkn}{\mathcal{G}_{\text{\tiny{KN}}}}
\newcommand{\Gw}{\mathcal{G}_{\text{\tiny{W}}}}
\newcommand{\Op}{\text{Op}}
\newcommand{\Opw}{\text{Op}_{\text{\tiny{W}}}}
\newcommand{\Opkn}{\text{Op}_{\text{\tiny{KN}}}}

\begin{document}

\title{Wave-Current-Bathymetry Interaction Revisited: Modeling, Analysis and Asymptotics}

\author[1]{Adrian Kirkeby}
\author[2]{Trygve Halsne}

\affil[1]{Simula Research Laboratory, Norway \\ \texttt{adrian@simula.no}}
\affil[2]{Norwegian Meteorological Institute, Norway \\ \texttt{trygveh@met.no}}

\maketitle

\begin{abstract}
Starting from the free surface Euler equations, we derive a leading-order system in terms of surface variables, depending on the surface current and on the bathymetry through the depth-dependent Dirichlet-to-Neumann (DN) operator. The resulting system is shown to be well-posed using the theory of hyperbolic systems of pseudo-differential operators.

We then consider wave propagation in slowly varying environments. As an explicit approximation to the DN operator, the semiclassical Weyl quantization of the symbol $g_b(X,\xi)=|\xi|\tanh(b(X)|\xi|)$ is shown to be both asymptotically accurate and consistent with the self-adjoint structure of the true operator, and to provide the natural framework for asymptotic analysis of the wave system. 

A central consequence of the resulting framework is that classical asymptotic models--including the wave action equation, the mild-slope equation, the Schrödinger equation, and the action balance equation--emerge systematically from a single formulation. By deriving these equations, we show how the simple leading order system with the Weyl quantization of the DN operator provides a unified and mathematically consistent framework for the asymptotic linear theory of wave–current–bathymetry interaction, hence providing a transparent, rigorous and accessible route from the primitive Euler equations to the mentioned asymptotic models. Throughout, numerical experiments are included to illustrate the analysis.
\end{abstract}

\section{Introduction}
The study of water waves interacting with ocean currents and bathymetry is a classical topic within applied mathematics and asymptotic analysis, and has produced important and useful insights into the physics of this immensely complex phenomenon. Much of the classical theory is described in works like \cite{mei1989applied,phillips1977upperocean,johnson1997modern,stoker2019water,peregrine1976interaction} and references therein, and some of its applications to areas like wave forecasting or coastal engineering are described in \cite{komen1994dynamics, holthuijsen2010waves,kamphuis2020introduction}. While this foundational theory is elegant and its insights powerful, classical methods were naturally developed before the advent of today's more rigorous mathematical frameworks. Consequently, the underlying modeling and asymptotic assumptions can sometimes be challenging to both understand and verify by modern standards, which may complicate their justification in applications. Furthermore, while modern mathematical analysis and numerical work on water waves now rely heavily on the surface formulation of the problem, the so-called Zakharov--Craig--Sulem formulation \cite{waterwavesprob}, the asymptotic analysis of classical wave-current-bathymetry interactions remains largely unexplored from this viewpoint. The aim of this article is therefore to bridge the gap between the classical and the modern and to provide a unified and mathematically sound linear theory for wave-current-bathymetry interaction. The paper is organized as follows:

\textbf{\Cref{sect: mathematical model}:}
We start by carefully constructing our mathematical model for smooth but arbitrary bathymetries and slowly varying background currents. Assuming that the vorticity of the background flow is small and localized in space, we show that the leading order wave velocity remains a potential flow. Appealing to Luke's variational principle as a selection criterion in the linearization, we then formulate the leading order system of partial differential equations (PDE) as a linear, nonlocal system of evolution equations for the wave amplitude and surface potential using the Dirichlet-to-Neumann (DN) operator.

\textbf{\Cref{sect: well-posed}:} In this section we employ techniques from the theory of hyperbolic systems of pseudo-differential operators to establish the well-posedness of the initial-value problem. The analysis relies on splitting the DN operator into an elliptic, well-behaved part and a smoothing operator depending on the bathymetry, and a diagonalization procedure. By proving the existence, uniqueness, and stability of solutions, we confirm that our model is mathematically reasonable.

\textbf{\Cref{sect: asymptotics}:} This section focuses on the asymptotic analysis of the wave system over slowly varying bathymetries and currents. We argue why the self-adjointness of the DN operator is an essential feature to obtain correct asymptotic model, and show that a self-adjoint and accurate approximation of the DN operator is obtained using the semiclassical Weyl quantization of the symbol $g_b(X,\xi) = |\xi|\tanh(b(X)|\xi|)$. From this result, the derivation of the various asymptotic models follow, and
the schematic below outlines the flow and structure of the section and the equations and techniques involved. 

\begin{center}
\makebox[\textwidth][c]{
\scalebox{0.76}{
\begin{tikzpicture}[
    ->, >=stealth, thick,
    level 1/.style={sibling distance=8cm, level distance=3.5cm},
    level 2/.style={sibling distance=5cm, level distance=2.5cm},
    every node/.style={draw, rounded corners, inner sep=6pt, font=\scriptsize, align=center},
    edge from parent/.style={draw, -stealth}
]
\node (top) { Surface system \\
    $\begin{cases}
        \partial_t \eta +  \nabla_X\cdot(\bar{U} \eta) = \mathcal{G}(b)\varphi, \\
        \partial_t \varphi + \bar{U}\cdot \nabla_{X} \varphi = - g\eta .
    \end{cases}$
}
    child { node {
        $\begin{aligned}
            \partial_t \mathcal{E}&= -  \nabla_X \cdot (\bar{U} \mathcal{E}) + \frac{1}{2}g\left( \overline{\eta}\mathcal{G}\varphi - \varphi \mathcal{G}\overline{\eta}\right) \\
            &\quad +\nabla_X \cdot \bar{U}\left(\frac{1}{2}\varphi \mathcal{G}\overline{\varphi} -\frac{1}{2}g  |\eta|^2\right)  + \frac{1}{2}\varphi [\bar{U}\cdot \nabla_X,\mathcal{G}] \overline{\varphi}
        \end{aligned}$ 
    }
        child[level distance=3.05 cm] { node {Total energy evolution \\ $\begin{aligned} \frac{\mathrm{d}}{\mathrm{d}t}\int \mathcal{E} \mathrm{d}X &= -\int\nabla_X \cdot \bar{U} \frac{1}{2}g|\eta|^2 \mathrm{d}X \\ &\quad + \int_{\Omega(0,b)}  \mathcal{P} \mathrm{d}X \end{aligned}$} 
            edge from parent node[above left, draw=none, pos = 0.85] {$\int \mathcal{E} \mathrm{d}X$}
        }
        child { node (wave_action) {Conservation of wave action \\ $\partial_t \mathcal{A} + \nabla_X\cdot ((\bar{U} + C_g) \mathcal{A}) = 0$} 
            edge from parent node[above right, draw=none, pos = 0.8] {WKB}
        }
        edge from parent node[above left, draw=none,pos = 0.6] {Energy density, $\mathcal{E} = \frac{1}{2}(g\eta^2 + \varphi \mathcal{G}\overline{\varphi})$}
    }
    child[level distance=5.5cm] { node (right_pde) {
        $\begin{cases}
            \partial_t^\mu \eta +  \nabla_X^\mu\cdot(\bar{U} \eta) = \Gw^\mu(b)\varphi, \\
            \partial_t^\mu \varphi + \bar{U}\cdot \nabla_{X}^\mu \varphi = - g\eta .
        \end{cases}$
    }
        child[level distance=3.5cm] { node {Mild-slope equation \\ $\nabla_X \cdot c \nabla \psi + c \kappa^2 \psi = 0$} 
            edge from parent node[above left, draw=none,pos = 0.7] {Time-harmonic waves}
        }
        child[level distance=3.5cm] { node {Schrödinger equation \\ $\begin{aligned} &\partial_t A_\varphi = -(\bar{U} + C_g)\cdot \nabla_X A_\varphi \\
        &-\frac{1}{2}\left(\nabla_X \cdot(\bar{U} + C_g)\right) A_\varphi \\ & - \frac{D_{\bar{U}+ C_g}\sigma }{2\sigma}A_\varphi  -\frac{i\mu}{2}\nabla_X \cdot (D \nabla_X A_\varphi) \end{aligned} $} 
            edge from parent node[left, draw=none] {WKB}
        }
        child[level distance=3.5cm] { node {Action balance equation \\ $\partial_t \mathcal{A} + \{\omega, \mathcal{A} \} = 0$} 
            edge from parent node[above right, draw=none, pos = 0.7] {Wigner distribution}
        }
        edge from parent node[above right, draw=none] {Slow horizontal coordinates $(\mu \ll 1)$ \\ $\mathcal{G} \mapsto \Gw^\mu(b)$}
    };

\path (top.north) ++(0, 1.5) node (euler) {Free surface Euler equations};

\draw[->] (euler) -- (top);

\draw[->] (right_pde) -- (wave_action);

\end{tikzpicture}
}}
\end{center}
We derive an evolution equation for the exact wave energy density $\mathcal{E}$, which is explored in detail; we find that it leads to a simple approximate evolution equation for the total wave energy depending on the bulk production---known from turbulence theory---and surface flow divergence of the current.  
We continue by considering the action of the Weyl operator on wave packets and derive an evolution equation for the phase-averaged energy density, and we show that this equation agrees with the classical wave action equation.

Next, we show how both the mild-slope equation and linear Schrödinger equation can be derived from the asymptotic formulation of the governing PDE. 

Last, we explore spectral energy dynamics and apply the Wigner transform to a diagonalized version of the system to derive a spectral energy evolution equation and the so-called energy balance equation. 

Although parts of the article are quite technical, we have tried to connect the results of the analysis to the classical theory in a clear manner so that it can hopefully be read and appreciated by a wider audience. The longer and more involved proofs are therefore moved to the Appendix. In addition, the paper is accompanied by a Jupyter Notebook/Python module to simulate wave--current--bathymetry interaction (available at \href{https://github.com/jfkirkeby/WaCuBa}{https://github.com/jfkirkeby/WaCuBa}), and we use this numerical framework to illustrate and verify several of our findings.

\section{Mathematical model}

\label{sect: mathematical model}
We denote the domain occupied by the fluid by $\Omega(\eta,b) = \{(X,z) \in \R^2 \times (-b(X),\eta(t,X))\}$, where $\eta(t,X)$ denotes the surface elevation and let $b(X)$ be the depth, and require $|\eta - b| > 0$. We assume the fluid is incompressible and satisfies the incompressible Euler equations, i.e., 
\begin{equation}
    \partial_t \bm{U} + (\bm{U}\cdot \nabla_{X,z})\bm{U} = -\frac{\nabla_{X,z}P}{\rho} -g\bm{e}_3, \quad \text{in} \quad \Omega(\eta,b), \quad \nabla_{X,z} \cdot\bm{U} = 0, \quad \text{in} \quad \Omega(\eta,b).
    \label{eq: euler}
\end{equation}
Here $\bm{U}$ denotes the fluid velocity vector and $\nabla_{X,z} = (\nabla_X,\partial_z)^\top$, $g$ is the gravitational acceleration and $\bm{e}_3 = (0,0,1)^\top$ We write $\bm{U} = (U,W)^\top$ with horizontal component $U = (U_1,U_2)$. With $\nu$ as the downward unit normal of the bottom surface, we assume the boundary conditions 
\begin{equation}
     \nu \cdot \bm{U}|_{z = -b(X)} = 0 \quad \text{and} \quad P|_{z = \eta} = P_{atm}.  
    \label{eq: bc euler}
\end{equation}
Incompressibility and conservation of mass leads to conservation law
\begin{equation}
    \partial_t \eta + \nabla_X \cdot\int_{-b}^\eta  U \mathrm{d}z = 0.
    \label{eq: mass cons}
\end{equation}
Using Leibniz' rule and $\nabla_X \cdot U = -\partial_z W $, one finds that  
\begin{align*}
     \nabla_X \cdot\int_{-b}^\eta  U \mathrm{d}z &= \nabla_X \eta \cdot U|_{z=\eta} + \nabla_X b \cdot U|_{z=-b} - \int_{-b}^\eta \partial_z W \mathrm{d}z  \\
     &= \nabla_X \eta \cdot U|_{z=\eta}  - W|_{z=\eta} + \nabla_X b \cdot U|_{z=-b}+ W|_{z=-b}.
\end{align*}
Due to \eqref{eq: bc euler} the bottom terms cancel and we get the so-called kinematic boundary condition:
\begin{equation}
    \partial_t \eta + \nabla_X \eta \cdot U|_{z=\eta} = W|_{z=\eta}
    \label{eq: kin full}
\end{equation}
The above equations constitute the model from which we will derive our linearized system. 
\begin{center}
\vspace{2mm}
    \underline{\textbf{Bathymetry, Current and Linearization}}
    \vspace{2mm}
\end{center}    
The bathymetry is considered stationary, and described by a smooth, bounded function $b(X)$ such that  $b_{\text{\tiny{min}}} \leq b(X) \leq b_{\text{\tiny{max}}} $ with $b_{\text{\tiny{min}}} > 0$. For technical reasons, we also assume that $b(X)$ eventually tends to $b_{\text{\tiny{max}}}$, i.e., there is some finite $R $ such that $b(X) = b_{\text{\tiny{max}}} $ for $|X| > R$.

Next, let  $\bar{\bm{U}}(t,X,z) = (\bar{U}, \bar{W})$ with $\bar{U} = (\bar{U}_1,\bar{U}_2)$ be the smooth background current,
characterized by the parameter $ \delta$ satisfying $0 < \delta < 1$. If $L$ is a typical length scale where $\mathcal{O}(1)$ changes in $U$ take place, we set $\delta = 1/L$ and enforce this by requiring $|D^\alpha \bar{U}_j| = \mathcal{O}(\delta^{|\alpha|})$ and $|D^\alpha \bar{W}| = \mathcal{O}(\delta^{|\alpha|})$ for any multi-index\footnote{Recall that for $\alpha = (\alpha_1,\alpha_2,\alpha_3)$,   $D^\alpha u = \partial_{x_1}^{\alpha_1}\partial_{x_2}^{\alpha_2}\partial_{z}^{\alpha_3} u$ and $|\alpha| = \sum \alpha_j.$} $\alpha$. 
We also assume that $\bar{\bm{U}}$ supports a background surface elevation $\bar{\eta}$ such that $\bar{\eta} \sim \delta$ and both $ |\partial_t \bar{\eta}| = \mathcal{O}(\delta)$ and $ |\nabla_X \bar{\eta}| = \mathcal{O}(\delta)$. Moreover, we assume the associated background pressure $\bar{P}$ satisfies $\bar{P}|_{z=\bar{\eta}} = P_{atm}$ for some given atmospheric surface pressure. 
\begin{figure}[htbp]
    \centering
    \includegraphics[width=0.9\linewidth]{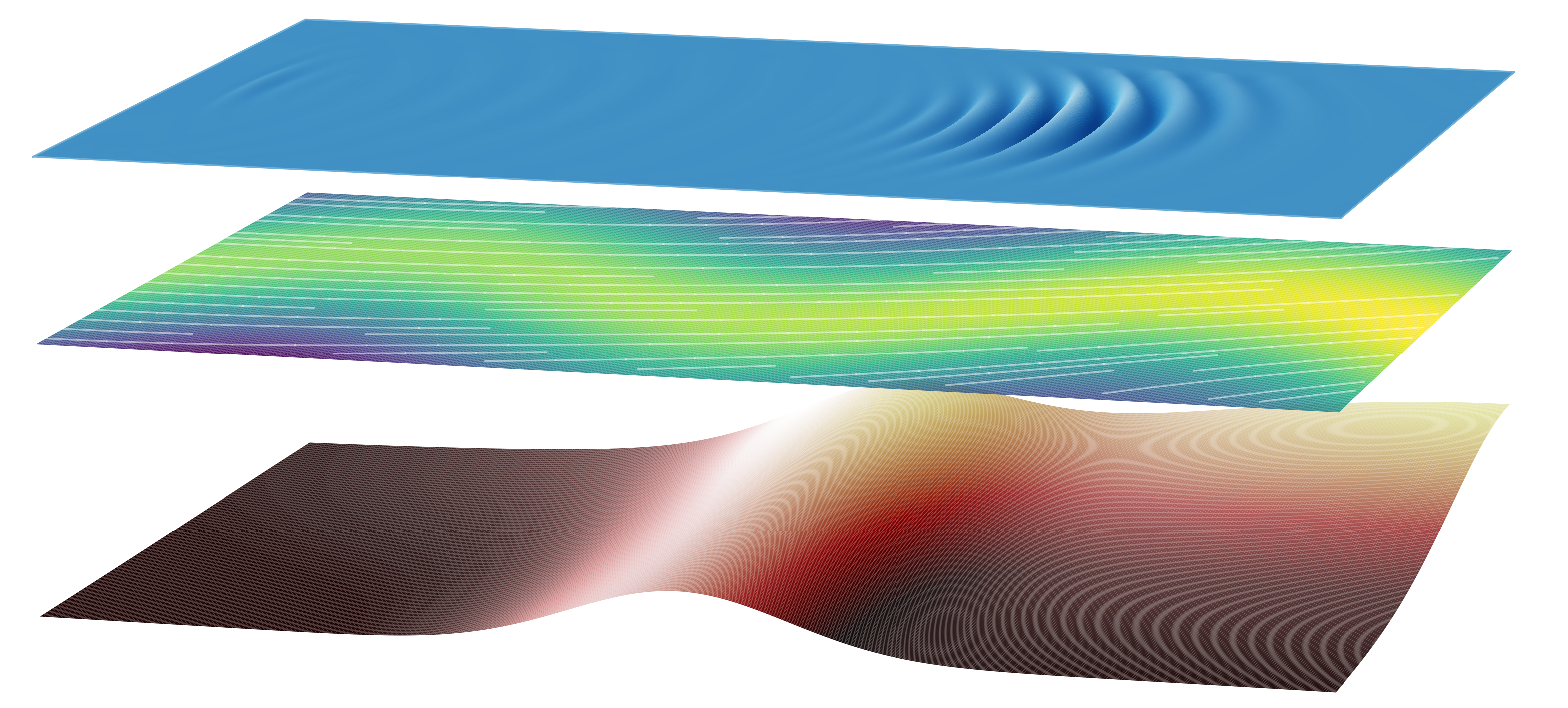}
    \caption{The figure illustrate the main components of our model; propagating waves (top), current (middle) and bathymetry (lower). Although the scales have been compressed for the purpose of illustration, the depicted current and bathymetry are used in the numerical simulation of the plotted wave in the numerical investigation of diffractive effects in \Cref{sect: diffraction}.}
    \label{fig: model}
\end{figure}

We now derive a linearized wave system around the mean surface $z = 0$, and use the shorthand notation $f_0 = f|_{z=0}$ for all quantities evaluated at $z = 0$. We consider a wave perturbation of the surface $\eta$ with corresponding velocity  $\bm{u} = (u,w)$ and pressure $p$. For a linearization parameter $0 < \varepsilon \ll 1 $, we assume small amplitude waves in the sense that $\eta \sim \varepsilon $, $ |\nabla_X \eta| \sim \varepsilon$ and $ |\bm{u}| = \mathcal{O}(\varepsilon).$ 
Taylor expanding $\bar{\bm{U}} + \bm{u} $ around $z = 0$  gives \[(\bar{\bm{U}} + \bm{u} )|_{z = \bar{\eta} + \eta} = (\bar{\bm{U}} + \bm{u})_0+ (\bar{\eta} +\eta)\partial_z (\bar{\bm{U}} + \bm{u} )_0+ \mathcal{O}(\varepsilon^2 +\delta^2).\]
The kinematic boundary condition linearized around $z=0$ consequently reads
\begin{equation}
    \partial_t \eta + \bar{U}_0\cdot \nabla_X \eta - w_0= -u_0\cdot \nabla_X \bar{\eta} + \eta (\partial_z \bar{W})_0+ \bar{\eta}(\partial_z w)_0+ \mathcal{O}(\varepsilon^2 + \delta^2).
    \label{eq: kin lin}
\end{equation}
We now consider the motion in the bulk. Linearizing the Euler equation \eqref{eq: euler} gives
\begin{equation}
    \partial_t \bm{u} + (\bar{\bm{U}}\cdot \nabla_{X,z})\bm{u}  = - \nabla_{X,z}p/\rho -(\bm{u}\cdot \nabla_{X,z})\bar{\bm{U}} + \mathcal{O}(\varepsilon^2), \quad \nabla_{X,z} \cdot \bm{u} = 0. 
\end{equation}
The following result shows that under the assumption that the vorticity of the background current is small and of finite extent in space, the wave perturbation $\bm{u}$ is a potential flow to leading order, i.e., $\bm{u} = \nabla_{X,z} \phi + \mathcal{O}(\delta \varepsilon)$ if it is initially a potential flow.  
\begin{proposition}
    Assume that $\nabla_{X,z} \times \bar{\bm{U}} = \bar{\bm{\omega}} = \mathcal{O}(\delta)$ and that there is some fixed $R \geq 0$ such that $\bar{\bm{\omega}} = 0$ for $|X| \geq R$. Moreover, assume the wave perturbation satisfies  $D^\alpha\bm{u} = \mathcal{O}(\varepsilon)$  for $|\alpha| \leq 3$ and $T = \mathcal{O}(1)$, and that it is initially irrotational, i.e, $\bm{\omega}|_{t=0} = 0$. For times $T = \mathcal{O}(1)$ it then holds  $\bm{u} = \nabla_{X,z} \phi + \tilde{\bm{u}}$, where $\nabla_{X,z} \cdot \tilde{\bm{u}} = 0$ and $\tilde{\bm{u}}$ is pointwise small in the sense that $\tilde{\bm{u}} = \mathcal{O}(\delta \varepsilon)$,$ \nabla_{X,z} \tilde{\bm{u}} = \mathcal{O}(\delta \varepsilon)$, and $\partial_t \tilde{\bm{u}} = \mathcal{O}(\delta \varepsilon)$ with constants depending on $R$ and $T$. 
    \label{prop: potpert}
\end{proposition}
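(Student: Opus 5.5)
We work with the vorticity $\bm{\omega} = \nabla_{X,z}\times\bm{u}$ of the wave perturbation. Taking the curl of the linearized momentum equation eliminates the pressure. Using $\nabla\cdot\bm{u}=0$ and $\nabla\cdot\bar{\bm{U}}=0$, we obtain the linearized vorticity equation
\begin{equation*}
\partial_t\bm{\omega} + (\bar{\bm{U}}\cdot\nabla_{X,z})\bm{\omega} = (\bm{\omega}\cdot\nabla_{X,z})\bar{\bm{U}} + (\bar{\bm{\omega}}\cdot\nabla_{X,z})\bm{u} - (\bm{u}\cdot\nabla_{X,z})\bar{\bm{\omega}} + \mathcal{O}(\varepsilon^2).
\end{equation*}
The first term on the right is $\mathcal{O}(\delta)\bm{\omega}$ because $\nabla\bar{\bm{U}}=\mathcal{O}(\delta)$. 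The last two terms are source terms of size $\mathcal{O}(\delta\varepsilon)$, since $\bar{\bm{\omega}}=\mathcal{O}(\delta)$, $\nabla\bar{\bm{\omega}}=\mathcal{O}(\delta^2)$ and $D^\alpha\bm{u}=\mathcal{O}(\varepsilon)$. Both source terms are supported in the compact set $|X|\le R$. We integrate along the characteristics of $\bar{\bm{U}}$, which are well defined for $T=\mathcal{O}(1)$ because $\bar{\bm{U}}$ is smooth and bounded. The initial condition $\bm{\omega}|_{t=0}=0$ and Gr\"onwall's inequality then give $\bm{\omega}=\mathcal{O}(\delta\varepsilon)$ pointwise for $t\le T$, with constant $e^{C\delta T}T$. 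Transport at bounded speed moves the support by at most $\|\bar{\bm{U}}\|_\infty T$, so $\bm{\omega}$ is supported in $|X|\le R+CT$. We repeat the argument for the differentiated equation to get $\nabla\bm{\omega}=\mathcal{O}(\delta\varepsilon)$; the commutator terms are of the same form, which is where the bound on $D^\alpha\bm{u}$ up to order $3$ is used. From the equation itself, $\partial_t\bm{\omega}=\mathcal{O}(\delta\varepsilon)$ as well.

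Next comes a Helmholtz-type decomposition in the fluid domain. We set $\tilde{\bm{u}}=\nabla\times\bm{\psi}$, with $\bm{\psi}$ solving $-\Delta\bm{\psi}=\bm{\omega}$ and $\nabla\cdot\bm{\psi}=0$. Then $\tilde{\bm{u}}$ is divergence free and $\nabla\times\tilde{\bm{u}}=\bm{\omega}$. Consequently $\bm{u}-\tilde{\bm{u}}$ is curl free, so on the simply connected slab-like domain it equals $\nabla\phi$. It remains to bound $\tilde{\bm{u}}$. Because $\bm{\omega}$ has compact horizontal support in a region of bounded vertical extent ($b\le b_{\max}$), we may extend it by zero and represent $\tilde{\bm{u}}$ with the Biot--Savart kernel,
\begin{equation*}
\tilde{\bm{u}}(x)=\frac{1}{4\pi}\int \frac{\bm{\omega}(y)\times(x-y)}{|x-y|^3}\,\mathrm{d}y .
\end{equation*}
Boundary conditions are not needed at this stage, since any harmonic correction can be absorbed into $\nabla\phi$. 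The kernel is locally integrable and the support of $\bm{\omega}$ has volume $\mathcal{O}((R+CT)^2 b_{\max})$, so $\|\tilde{\bm{u}}\|_\infty\le C(R,T)\|\bm{\omega}\|_\infty=\mathcal{O}(\delta\varepsilon)$. For the gradient we differentiate under the integral after moving the derivative onto $\bm{\omega}$, giving $\nabla\tilde{\bm{u}}=\mathcal{O}(\|\nabla\bm{\omega}\|_\infty)=\mathcal{O}(\delta\varepsilon)$. The same argument applies to $\partial_t\tilde{\bm{u}}$ using $\partial_t\bm{\omega}$.

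The main obstacle is the bookkeeping for the moving support of $\bm{\omega}$, which is needed to justify the $R$- and $T$-dependent constants. A further difficulty is that the fluid domain moves and has a boundary. The zero-extension Biot--Savart representation sidesteps the boundary entirely, but it yields a $\tilde{\bm{u}}$ that need not satisfy the bottom condition; I would accept this and let $\phi$ carry the boundary conditions. The remaining delicate point is the bound on $\nabla\tilde{\bm{u}}$. Moving the derivative onto $\bm{\omega}$ produces a boundary term where the zero extension of $\bm{\omega}$ is discontinuous at the bottom and surface. I would handle this by extending $\bm{\omega}$ smoothly, for instance by reflection across $z=-b$, and checking that this extension is still $\mathcal{O}(\delta\varepsilon)$ in $C^1$. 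If that fails, the fallback is a Calder\'on--Zygmund/H\"older estimate using the bound on $\nabla\bm{\omega}$.
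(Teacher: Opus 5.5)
Your vorticity estimates (characteristics, Gr\"onwall, propagation of support, the differentiated equation) are the same as the paper's and are fine. The gap is in the decomposition step. You require a $\bm{\psi}$ with both $-\Delta_{X,z}\bm{\psi}=\bm{\omega}$ and $\nabla_{X,z}\cdot\bm{\psi}=0$, and then take $\bm{\psi}$ to be the Newtonian potential of the zero extension $\bm{\omega}_{\mathrm{ext}}$. These two conditions hold together only if $\bm{\omega}_{\mathrm{ext}}$ is divergence free on $\mathbb{R}^3$, and it is not. With $n$ the outward normal of $\Omega(0,b)$, one has $\nabla_{X,z}\cdot\bm{\omega}_{\mathrm{ext}}=-(\bm{\omega}\cdot n)\,\mathrm{d}S$ on $\{z=0\}\cup\{z=-b\}$. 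Moreover $\bm{\omega}\cdot n$ is generically nonzero: tilting by the source $(\bar{\bm{\omega}}\cdot\nabla_{X,z})\bm{u}$ creates $\omega_3=\partial_{x_1}u_2-\partial_{x_2}u_1\neq0$ at $z=0$. Hence $\nabla_{X,z}\cdot\bm{\psi}=-h$, where $h$ is the single-layer potential of $\bm{\omega}\cdot n$, and inside the fluid
\[
\nabla_{X,z}\times\bm{u}_\omega=\nabla_{X,z}(\nabla_{X,z}\cdot\bm{\psi})-\Delta_{X,z}\bm{\psi}=\bm{\omega}-\nabla_{X,z}h .
\]
So $\nabla_{X,z}\times(\bm{u}-\bm{u}_\omega)=\nabla_{X,z}h$, which vanishes only if $\bm{\omega}\cdot n\equiv0$. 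Thus $\bm{u}-\bm{u}_\omega$ is not a gradient, and the decomposition $\bm{u}=\nabla_{X,z}\phi+\tilde{\bm{u}}$ is not established. Your remark that ``any harmonic correction can be absorbed into $\nabla\phi$'' fails because the defect is rotational, not a harmonic gradient. The reflection you mention for the gradient bound does not address this.

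To close the gap you need a compactly supported \emph{solenoidal} extension of $\bm{\omega}$ that is still $\mathcal{O}(\varepsilon\delta)$ in $C^1$. This is possible because the flux of $\bm{\omega}$ through $\{z=0\}$, and hence through the bottom, vanishes. Indeed, the circulation of $\bm{u}$ around a large horizontal loop on $z=0$, outside the supports of $\bm{\omega}$ and $\bar{\bm{\omega}}$, is conserved by the linearized equations and is zero at $t=0$. A Bogovskii correction in a bounded collar then produces the extension.

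The paper avoids this by fixing $\phi$ first. It takes the Helmholtz--Hodge splitting of the surface horizontal velocity, followed by the harmonic extension with $\partial_\nu\phi=0$ at the bottom. Then $\tilde{\bm{u}}:=\bm{u}-\nabla_{X,z}\phi$ is an exact splitting by construction, and $\tilde{\bm{u}}$ solves a div--curl problem with small data. Only the bounds remain, which it obtains from Biot--Savart plus a harmonic correction handled by layer potentials, a Fredholm equation and Schauder estimates. The paper's claim $\nabla_{X,z}\times\bm{u}^*=0$ tacitly needs the same solenoidal extension, but there it affects only the estimate, not the existence of the decomposition.

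Once repaired, your route is shorter: no strip Green's function, no Fredholm theory, and no boundary terms in the gradient bound. The price is that your $\phi$ has bottom Neumann data $-\tilde{\bm{u}}\cdot\nu=\mathcal{O}(\varepsilon\delta)$. The paper's $\phi$ instead satisfies exactly the homogeneous condition used in the leading-order system.
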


\textbf{Note}: The validity of above result hinges on the apriori smallness assumption on the wave perturbation $\bm{u}$, typically valid for wavenumbers $k = \mathcal{O}(1)$. That such an assumption seems reasonable can for example be verified by throwing a rock in a calm river and watching the waves develop gently. In general, however, stability of the linearized Euler equations is highly non-trivial (cf. \cite{shvydkoy2005recent}). 
The proof can be found in the Appendix and relies on basic estimates for the linearized vorticity equation, the div-curl problem for $\tilde{\bm{u}}$ and boundary integral equation techniques. \\ 

By inserting $\bm{u} = \nabla_{X,z} \phi + \tilde{\bm{u}}$ and calculating the Bernoulli formulation, we get 
\begin{equation}
\begin{split}
\nabla_{X,z}\left(\partial_t \phi + \bar{\bm{U}} \cdot \nabla_{X,z} \phi + p \right) = {} & \nabla_{X,z} \phi \times \bar{\bm{\omega}} - \partial_t \tilde{\bm{u}} \\
& + (\bar{\bm{U}} \cdot \nabla_{X,z} )\tilde{\bm{u}} + (\tilde{\bm{u}} \cdot \nabla_{X,z} )\bar{\bm{U}}  + \mathcal{O}(\varepsilon^2)
\end{split}
\label{eq: grad bernoulli}
\end{equation}
and the incompressibility constraints $\Delta_{X,z} \phi = 0$ and $\nabla_{X,z} \cdot \tilde{\bm{u}} = 0$. 
Before we evaluate the above equation at $z = 0$ and derive our leading order system of equations, we make a structural observation. 
\begin{center}
\vspace{2mm}
\underline{\textbf{A leading order system}}
\vspace{2mm}
\end{center}
All terms appearing on the right hand sides of both equation \eqref{eq: kin lin} and \eqref{eq: grad bernoulli} are $\mathcal{O}(\varepsilon \delta)$ and can be truncated to yield a consistent leading order $\mathcal{O}(\varepsilon)$ system. However, in the simplified case when $\bar{\bm{U}}$ is a potential flow, appealing to Luke's variational principle (Ch.13, \cite{whitham2011linear}) shows that only one of these $\mathcal{O}(\varepsilon \delta)$ terms are needed to keep the variational structure intact for the $\mathcal{O}(\varepsilon)$ equations. To see this, we consider the Lagrangian density
\begin{equation}
    \mathcal{L} =  \int_{-b}^\eta \left(\partial_t \Phi + \frac{1}{2}|\nabla_{X,z}\Phi|^2 \right)\mathrm{d}z  + \frac{1}{2}g\eta^2. 
\end{equation}
Inserting $\bar{\bm{U}} = \nabla_{X,z}\Phi$ and $\bar{\eta}$ and linearizing for $\eta, \phi$ around $z = 0$ while discarding all $\mathcal{O}(\varepsilon \delta)$ terms gives
\[\mathcal{L}^{(2)} = \int_{-b}^0 \frac{1}{2}|\nabla_{X,z} \phi|^2 \mathrm{d}z + \eta(\partial_t \phi_0 + \bar{U}_0\cdot \nabla_X \phi_0) + \frac{1}{2}g\eta^2. \]
Computing the first variation of the action functional $\mathcal{S} = \int_{t_0}^{t_1} \int_{\R^2} \mathcal{L}^{(2)} \mathrm{d}X \mathrm{d}t$, we get the leading order $O(\varepsilon)$ system 
\begin{alignat*}{2}
    &\partial_t \eta + \nabla_X\cdot (\bar{U}_0\eta) = \partial_z \phi_0, \quad && \partial_t \phi_0 + \bar{U}_0\cdot \nabla_X \phi_0 = -g \eta, \\
    &\Delta_{X,z} \phi = 0 \text{ in } \Omega(0,b), \quad && \partial_\nu \phi|_{z=-b} = 0. 
\end{alignat*}
Comparing the kinematic condition \eqref{eq: kin lin} with the one above and using that incompressibility gives $\nabla_X \cdot \bar{U} = - \partial_z \bar{W}$, we recover the above kinematic equation by keeping the $\mathcal{O}(\varepsilon \delta)$ term $\eta \partial_z \bar{W}_0 = - \eta\nabla_X \cdot \bar{U}_0 $, while discarding the remaining $\mathcal{O}(\varepsilon \delta) $ terms. By the same logic, we recover the dynamic equation above from \eqref{eq: grad bernoulli} by discarding all $\mathcal{O}(\varepsilon \delta)$ terms and taking the surface trace (and using that $p_0 =  -g\eta + \mathcal{O}(\varepsilon \delta)$). 

Therefore, motivated by both structural reasons (adhering to the variational principle), and for practical modeling reasons (stripping the model for typically unknown data like $\bar{\eta}, \nabla_X \bar{\eta}$ and $\bar{\bm{\omega}}$), we take as a leading order linear system the equations

\begin{equation}
    \begin{cases}
        \partial_t \eta +  \nabla_X\cdot (\bar{U}_0 \eta) - \partial_z \phi_0= \mathcal{O}(\varepsilon \delta), \\
        \partial_t \phi_0+ \bar{U}_0\cdot \nabla_{X} \phi_0+ g\eta = \mathcal{O}(\varepsilon \delta),\\
         \Delta_{X,z} \phi = 0 \quad \text{in} \quad \Omega(0,b), \quad \partial_\nu \phi|_{z=-b} = 0.  
    \end{cases}
    \label{eq: linerized1}
\end{equation}
We now introduce the surface velocity potential $\varphi(t,X) = \phi|_{z=0}$ and the Dirichlet-to-Neumann (DN) operator $\mathcal{G}(b)$ defined by 
\begin{equation}
    \mathcal{G}(b)\varphi = \partial_z \phi|_{z=0}, \quad \text{where } \phi \text{ solves } \quad \begin{cases}
        \Delta_{X,z} \phi = 0 \quad \text{in} \quad  \Omega(0,b), \\
        \phi = \varphi \quad \text{at} \quad z = 0, \\
        \partial_\nu \phi = 0 \quad \text{at} \quad z = -b(X).
    \end{cases}
    \label{eq: DN-def}
\end{equation}
Writing $\bar{U} = \bar{U}_0$ we express \eqref{eq: linerized1} in terms of $\eta, \varphi, \bar{U}$ and $ \mathcal{G}(b)$:
\begin{equation}
    \begin{cases}
        \partial_t \eta +  \nabla_X\cdot(\bar{U} \eta) = \mathcal{G}(b)\varphi, \\
        \partial_t \varphi + \bar{U}\cdot \nabla_{X} \varphi = - g\eta .\\  
    \end{cases}
    \label{eq: linerized2d}
\end{equation}
This form is known as the Zakharov--Craig--Sulem formulation (cf. \cite{waterwavesprob}), and casts the wave-current interaction problem as a 1D$+$2D problem, at the cost of introducing the nonlocal DN operator. The above system is a 2D version of the 1D system considered in the influential paper \cite{longuet1961changes} that also includes time-dependent currents with weak vorticity and variable bathymetry. As we will see in \Cref{sect: asymptotics}, equation \eqref{eq: linerized2d} has significant explanatory powers despite its simplicity.

\section{Well-Posedness of the Wave-Current-Bathymetry System}

\label{sect: well-posed}
We now consider the question of well-posedness of the leading order system for wave-current-bathymetry interaction. We consider the Cauchy problem 
\begin{equation}
    \begin{cases}
    \partial_t \eta + \nabla_X \cdot(\bar{U} \eta)- \mathcal{G}(b)\varphi = 0, \\
    \partial_t \varphi +  \bar{U} \cdot \nabla_X \varphi + g\eta   = 0, \\
    \left(\eta(0,X), \varphi(0,X)\right) = \left(\eta_0,\varphi_0\right).
    \end{cases}
    \label{eq: cauchy}
\end{equation}
Above, $\left(\eta_0,\varphi_0\right)$ are suitable initial conditions for the wave and surface potential, and we investigate the existence, uniqueness and stability of solutions. Equation \eqref{eq: cauchy} is a coupled, nonlocal wave-transport system, and even though the equation is linear, the spatially varying coefficients and nonlocality of $\mathcal{G}(b)$ makes the analysis more involved. An operator perturbation argument for $\mathcal{G}$ allows us to symmetrize the principal term in the equation and treat it as a first order hyperbolic system with matrix valued pseudo-differential symbol, and from this, energy estimates and well-posedness of the PDE can be established. We obtain the following result: 

\begin{proposition}
 Assume $\bar{U}(t,X) \in C^\infty_b(\R \times \mathbb{R}^2;\mathbb{R}^2)$ and that the bathymetry \\ $b(X) \in C^\infty_b(\R^2)$ satisfies the assumptions in \Cref{sect: mathematical model}. Let $(\eta_0,\varphi_0) \in H^s \times H^{s+1/2}  $ for $s \in \mathbb{R}$. Then equation \eqref{eq: cauchy} has a unique solution $(\eta,\varphi) \in C(\mathbb{R};H^s \times H^{s+1/2})  $. 
 \label{prop: well-posed}
\end{proposition}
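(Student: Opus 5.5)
The plan is to rewrite \eqref{eq: cauchy} as a first order hyperbolic system of pseudo-differential operators, symmetrize it, and close an energy estimate in $H^s\times H^{s+1/2}$; existence then follows by duality or regularization.

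\textbf{Splitting the DN operator.} Write $\mathcal{G}(b) = |D|\tanh(b_{\text{\tiny{max}}}|D|) + \mathcal{R}(b)$. Since $b=b_{\text{\tiny{max}}}$ outside a ball and $b\geq b_{\text{\tiny{min}}}>0$, the difference $\mathcal{R}(b)$ should be infinitely smoothing, i.e.\ bounded $H^{\sigma}\to H^{\sigma+N}$ for all $\sigma, N$. To see this, compare harmonic extensions on the flat strip and the true domain: the difference solves a Laplace problem with zero Dirichlet data at $z=0$. Its Neumann data is supported at depth $\geq b_{\text{\tiny{min}}}$, and interior elliptic regularity near $z=0$ then gives smoothing of arbitrary order. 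Moreover $|D|\tanh(b_{\text{\tiny{max}}}|D|) = |D| + S$ with $S$ of order $-\infty$. So $\mathcal{G}(b) = |D| + \mathcal{R}'$, where $\mathcal{R}'$ is smoothing. I expect this step to be the main technical obstacle, since it requires uniform-in-frequency control of the bottom influence. I would first try flattening the domain via $z\mapsto z\,b_{\text{\tiny{max}}}/b(X)$ and applying the Lannes-type analysis of the DN operator with a variable-coefficient elliptic operator, or alternatively a layer-potential representation whose kernel is smooth because the bottom stays a distance $b_{\text{\tiny{min}}}$ from the surface.

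\textbf{Symmetrization.} Set $\Lambda=(1+|D|^2)^{1/2}$ and introduce $\psi = \Lambda^{1/2}\varphi\in H^s$. The system becomes $\partial_t V + \bar U\cdot\nabla_X V + A(D)V + B(t)V = 0$ for $V=(\eta,\psi)$, where $A = \begin{pmatrix} 0 & -|D|\Lambda^{-1/2} \\ g\Lambda^{1/2} & 0\end{pmatrix}$. The lower-order terms $(\nabla\cdot\bar U)\eta$, the commutator $[\Lambda^{1/2},\bar U\cdot\nabla]\Lambda^{-1/2}$ (order $0$ by symbolic calculus, since $\bar U\in C_b^\infty$) and the smoothing parts are all collected in $B$, which is a bounded operator of order $0$. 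The principal symbol $A$ has eigenvalues $\pm i\sqrt{g|\xi|}(1+O(|\xi|^{-1}))$, of order $1/2$. To handle the skew-structure, I would use the symmetrizer $\mathcal{S}=\mathrm{diag}(g,\;|D|\Lambda^{-1})$ modified at low frequency to $\mathrm{diag}(g, 1)$ near $\xi=0$. Then $\mathcal{S}A$ is skew-adjoint modulo order $0$, and $\mathcal{S}$ is positive, bounded and boundedly invertible on $L^2\times L^2$. Equivalently, diagonalize $A$ by a zeroth-order pseudo-differential change of variables $V=PW$, which yields $\partial_t W + \bar U\cdot \nabla W + i\,\mathrm{diag}(\omega,-\omega)(D)W + \tilde B W = 0$ with $\omega$ real.

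\textbf{Energy estimate and existence.} For the symmetrized system, $\frac{d}{dt}\langle \mathcal{S}\Lambda^s V,\Lambda^s V\rangle \leq C\|V\|_{H^s}^2$. This holds because $\bar U\cdot\nabla$ is skew-adjoint up to $\nabla\cdot\bar U$, its commutators with $\Lambda^s$ and $\mathcal{S}$ are of order $0$ (Calder\'on--Vaillancourt), and $\mathcal{S}A$ is skew modulo order $0$. Gr\"onwall then gives $\|V(t)\|_{H^s}\leq e^{C|t|}\|V(0)\|_{H^s}$ on every bounded time interval, forward and backward, for all $s\in\R$; this yields uniqueness. The same estimate holds for the adjoint system, so existence in $C(\R;H^s)$ follows from the standard duality argument (Hahn--Banach with the adjoint estimate in $H^{-s}$), or from Friedrichs mollification plus compactness and weak-strong continuity. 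Finally, transforming back via $\varphi=\Lambda^{-1/2}\psi$ gives $(\eta,\varphi)\in C(\R;H^s\times H^{s+1/2})$.
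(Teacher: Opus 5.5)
Your proposal is correct and follows essentially the same route as the paper. Both arguments split $\mathcal{G}(b)$ into the flat-bottom multiplier plus an infinitely smoothing bathymetry remainder (the paper's \Cref{prop: G + K}), regularize at low frequency, shift half a derivative between $\eta$ and $\varphi$, and reduce to a symmetric hyperbolic pseudo-differential system whose principal part is $\bar U\cdot\nabla$ plus a skew-adjoint dispersive term of order $1/2$. The differences are only in implementation: the paper diagonalizes exactly with $S$ built from $\mathcal{G}^{\pm1/2}=\text{Op}(\gamma^{\pm1/2})$, cites Taylor's theorem, and proves the smoothing lemma with layer potentials, whereas you use the low-frequency-modified symmetrizer $\mathrm{diag}(g,|D|\langle D\rangle^{-1})$, a direct energy estimate plus duality, and elliptic regularity near $z=0$, which is an equally valid (and arguably cleaner) way to obtain the same result.
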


We now introduce the necessary tools and give a proof of the above result. 
\subsection{Pseudodifferential Analysis}

We briefly recall the definition of a pseudodifferential operator (PDO) (see, e.g., \cite{alinhac2007pseudo,taylor2013partial} for a proper introduction). For $m \in \mathbb{R}$ we denote by $S^m$ the set of  $a(X,\xi) \in C^\infty(\mathbb{R}^n \times \mathbb{R}^n)$ taking values in $\C$ and satisfying  
\[ |D_X^\alpha D_\xi^\beta a(X,\xi) | \leq C_{\alpha,\beta}(1 + |\xi|)^{m-|\beta|}, \quad \forall \alpha,\beta \in \mathbb{N}_0^n.\]
A function $a \in S^m$ is called a symbol, and we write $S^{-\infty} = \cap_{m} S^m$. We denote by $\text{Op}(a)$ the PDO associated with the symbol $a$. For suitable functions $g$, $\text{Op}(a)$ is defined as 
\[\text{Op}(a)g (X) =    \frac{1}{(2\pi)^n} \iint e^{i(X-Y)\cdot\xi} a(X,\xi) g(Y) \mathrm{d}Y\mathrm{d}\xi =\int e^{iX\cdot \xi} a(X,\xi) \widehat{g}(\xi) \mathrm{d}\xi, \]
where $\widehat{g}(\xi)$ is the Fourier transform of $g$. Note that this is the Kohn-Nirenberg (or standard) quantization of the symbol.  Moreover, the notation $\text{Op}(a) \in \mathrm{OPS}^m$ means $a \in S^m$, and for matrix valued pseudo-differential operators with symbol ${a_M = [a_{i,j}(X,\xi)]_{i,j = 1}^M}$ we say that \[a_M \in S^m \quad \text{if}  \quad  \| [D_X^\alpha D_\xi^\beta a(X,\xi)]_{i,j = 1}^M\| \leq C_{\alpha,\beta}(1 + |\xi|)^{m-|\beta|}, \quad \forall \alpha,\beta \in \mathbb{N}_0^n, \] where $\|\cdot\|$ is any proper matrix norm. 
We also allow for symbols with a simple, smooth time-dependence; for a function $a(t,X,\xi) \in C^\infty(\R^{2n + 1}) $, we say, with a slight notational overloading, that $a \in S^m$ if $\partial_t^k a(t,X,\xi) \in S^m$ for all $t \in \R$ and $k \in \N_0$.
We will also use the family of $L^2$-Sobolev spaces $H^s$ with $s\in \mathbb{R}$, defined as 
\[ H^s = \left\{ g \in \mathcal{S}': \int_{\mathbb{R}^2} (1 + |\xi|^2)^s|\widehat{g}(\xi)|^2 \mathrm{d}\xi \right\}, \]
where $\mathcal{S}'$ is the space of tempered distributions. Last, $C^\infty_b$ denotes the space of scalar or vector valued functions with the property that all derivatives are bounded and continuous. \\
\newline 
We proceed by constructing a useful modification of the constant depth DN operator. 
\begin{definition}
Fix $0 < \delta < 1 $, and let $h \in C^\infty(\mathbb{R})$ be a smooth characteristic function such that $h(\tau) \geq 0$, $h(\tau) = 0$  for $|\tau|> \delta$ and $h(\tau) = 1$ for $ |\tau| \leq \delta/2$. For a constant $B > 0$, set \begin{equation}
\gamma(\tau,B) = h(\tau) + \tau\tanh(B\tau) \quad \text{for } \tau \geq 0. 
\label{eq: mod symbol}
\end{equation}
We now define a family of PDOs as follows:
\[ \mathcal{G}^p(B) = \text{Op}( \gamma^p(|\xi|),B), \quad \text{for} \quad  p \in \mathbb{R}.\]
Also, we define the spectral cut-off operator $\mathcal{C}$ by $\mathcal{C}  = \text{Op}(h(|\xi|)) . $
\label{def: modDN}
\end{definition}
\noindent
We give some useful facts about the operators $\mathcal{G}^p$:  
\begin{lemma}
    The following holds for any $p\in \mathbb{R}$.
    \begin{itemize}
        \item[1:] The function $\gamma^p(|\xi|,B):\mathbb{R}^2 \to \mathbb{R}$ is a symbol and belongs to $S^p$. Consequently, \[\mathcal{G}^p(B)= \text{Op}(\gamma^p) :H^s \to H^{s-p} \quad \text{for all} \quad s \in \mathbb{R}.\] 
         \item[2:] For all $\alpha, p\in \mathbb{R}$, $\mathcal{G}^\alpha(B) \mathcal{G}^p(B)= \mathcal{G}^{\alpha + p}(B)$. In particular,  $\mathcal{G}^p(B): H^s \to H^{s-p}$ is invertible with inverse $\mathcal{G}^{-p}(B)$. 
         \end{itemize}
         \label{prop: G_pert}
\end{lemma}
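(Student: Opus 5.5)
The plan is to work with the one-variable function $\tau\mapsto\gamma(\tau,B)$, show it is smooth and strictly positive with the right growth, and then pass to the radial symbol $\gamma^p(|\xi|,B)$. The only delicate point is $\xi=0$: $|\xi|$ is not smooth there, and the non-smooth term $|\xi|\tanh(B|\xi|)$ makes $\gamma^p$ with $p$ non-integer problematic near zero.

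\textbf{Smoothness and positivity of $\gamma$.} The function $\tau\tanh(B\tau)$ is even and real-analytic in $\tau$, because $\tanh$ is odd. So $|\xi|\tanh(B|\xi|)$ is a smooth function of $|\xi|^2$, hence smooth on $\mathbb{R}^2$. The cutoff $h(|\xi|)$ is smooth because $h$ is constant near $\tau=0$ (it equals $1$ for $|\tau|\le\delta/2$). For positivity I will check $\gamma(\tau,B)\ge c>0$ on two ranges:
\begin{itemize}
\item for $\tau\le\delta/2$, $\gamma\ge h(\tau)=1$;
\item for $\tau\ge\delta/2$, $\gamma\ge\tau\tanh(B\tau)\ge(\delta/2)\tanh(B\delta/2)>0$.
\end{itemize}
Since $\gamma$ is smooth and bounded below by a positive constant, $\gamma^p=\exp(p\log\gamma)$ is smooth on $\mathbb{R}^2$ for every real $p$.

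\textbf{Symbol estimates.} For $|\xi|\ge\delta$ we have $h=0$ and $\gamma=|\xi|\tanh(B|\xi|)=|\xi|\,(1-r(|\xi|))$, where $r(\tau)=2/(e^{2B\tau}+1)$ and all derivatives of $r$ decay exponentially. Hence
\[
\gamma^p=|\xi|^p\,(1-r)^p .
\]
The factor $|\xi|^p$ is homogeneous of degree $p$, and $(1-r)^p$ is smooth and bounded away from zero on this region. It equals $1$ plus a term that is $O(e^{-2B|\xi|})$ with all its derivatives, so it lies in $S^0$. The Leibniz rule then gives
\[
|\partial_\xi^\beta\gamma^p|\le C_\beta(1+|\xi|)^{p-|\beta|}
\]
for $|\xi|\ge\delta$. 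On the compact set $|\xi|\le\delta$, every derivative of the smooth function $\gamma^p$ is bounded, which is trivially compatible with any weight. There is no $X$-dependence, so the $D_X^\alpha$ estimates are trivial. This gives $\gamma^p\in S^p$. The mapping property $H^s\to H^{s-p}$ follows from the standard boundedness of $\mathrm{OPS}^p$, or directly from Plancherel, since $\gamma^p(|\xi|)\lesssim(1+|\xi|^2)^{p/2}$ for a Fourier multiplier.

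\textbf{Composition and inversion.} Because these are Fourier multipliers independent of $X$, the Kohn--Nirenberg quantization composes exactly, with no lower-order remainder:
\[
\mathrm{Op}(a)\,\mathrm{Op}(b)\,g=\mathcal{F}^{-1}\!\left(a\,b\,\widehat g\right)=\mathrm{Op}(ab)\,g .
\]
Therefore $\mathcal{G}^\alpha\mathcal{G}^p=\mathrm{Op}(\gamma^{\alpha+p})=\mathcal{G}^{\alpha+p}$. Taking $\alpha=-p$ gives $\mathcal{G}^0=\mathrm{Op}(1)=I$, so $\mathcal{G}^{-p}$ is a two-sided inverse, and it maps $H^{s-p}\to H^s$ by part 1.

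The main obstacle is the smoothness at $\xi=0$. It is resolved by the two facts above: evenness of $\tau\tanh(B\tau)$, and the lower bound supplied by $h$. This lower bound is exactly why the modification $h$ is included in the definition, since without it $\gamma$ would vanish at $\xi=0$ and negative powers $\gamma^p$ would blow up there.
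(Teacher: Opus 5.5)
Your proof is correct, but you reach the symbol estimates by a different route than the paper. The overall architecture is the same: symbol estimates for the radial multiplier, then exact composition of $X$-independent Fourier multipliers.

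\textbf{The paper's route.} It works in the single variable $\tau$. It uses the two-sided bound $C(1+\tau)\le\gamma(\tau)\le 1+\tau$ and bounded $\tau$-derivatives of $\gamma$, obtained from holomorphy of $\tanh$ in a strip. It then applies a recursion $|\partial_\tau^k\gamma^p|\le C_k\gamma^{p-k}\le\tilde C_k(1+\tau)^{p-k}$, choosing the upper or lower bound according to the sign of $p-k$. Invertibility is proved by hand: injectivity from the lower bound, and surjectivity by setting $\widehat{u}=\gamma^{-p}\widehat{f}$ and estimating its $H^s$ norm.

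\textbf{Your route.} You split $\mathbb{R}^2$ into a ball, handled by compactness, and its exterior. On the exterior, $\gamma^p=|\xi|^p\tanh(B|\xi|)^p$ is a homogeneous symbol times an $S^0$ factor equal to $1$ up to exponentially small errors. You get invertibility directly from $\mathcal{G}^0=\text{Op}(1)=I$, which makes the injectivity/surjectivity argument unnecessary.

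\textbf{What each buys.} The paper's recursion is generic: it uses only $\gamma\asymp 1+\tau$ plus derivative control. As written, though, it tacitly needs $|\partial_\tau^m\gamma|\lesssim(1+\tau)^{1-m}$ rather than mere boundedness of $\partial_\tau^m\gamma$. Otherwise the Fa\`a di Bruno term $p\,\gamma^{p-1}\partial_\tau^k\gamma$ is only $O(\gamma^{p-1})$, not $O(\gamma^{p-k})$. It also leaves implicit the passage from $\tau$- to $\xi$-derivatives and the smoothness of $\gamma^p(|\xi|)$ at $\xi=0$.

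Your factorization supplies exactly the exponential decay of the higher derivatives that the recursion needs. Your evenness argument for $\tau\tanh(B\tau)$, together with $h\equiv 1$ near the origin, settles the smoothness on all of $\mathbb{R}^2$. Your version is therefore the more complete of the two, at the modest cost of relying on the specific structure of $\tanh$.
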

We proceed with a key result that allow us to split the DN operator into a sum of a constant depth operator and an infinitely smoothing operator that depends on the depth. This result is a version of the fact that the DN operator depends analytically on the bathymetry (cf. \cite{waterwavesprob} Ch. 3). 
\begin{proposition}
Let the bathymetry $b$ be as in \Cref{sect: mathematical model} and $\varphi \in H^s(\R^2)$. Then 
\[ \mathcal{G}(b)\varphi = \mathcal{G}(b_{\text{\tiny{max}}})\varphi + \mathcal{K}(b)\varphi ,\quad \text{where} \quad \mathcal{K}(b) \in \mathrm{OPS}^{-\infty}.\]
\label{prop: G + K}
\end{proposition}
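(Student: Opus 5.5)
We reduce the statement to an estimate on a localized boundary value problem. Because $b = b_{\text{\tiny{max}}}$ outside the ball $|X|\le R$, the two fluid domains $\Omega(0,b)$ and $\Omega(0,b_{\text{\tiny{max}}})$ coincide away from a compact region. The difference $\mathcal{K}(b) = \mathcal{G}(b)-\mathcal{G}(b_{\text{\tiny{max}}})$ should therefore be generated only by the "defect" near the bottom over $|X|\le R$. That defect lies at vertical distance at least $b_{\text{\tiny{min}}}>0$ from the surface $z=0$, and harmonic functions propagate information from it to the surface with exponential damping in frequency, roughly $e^{-b_{\text{\tiny{min}}}|\xi|}$. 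This damping is the source of the infinite smoothing.

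\textbf{Step 1 (difference of potentials).} Let $\phi$ solve \eqref{eq: DN-def} with depth $b$, and let $\phi_0$ be the explicit flat-bottom solution
\[\widehat{\phi_0}(\xi,z)=\frac{\cosh((z+b_{\text{\tiny{max}}})|\xi|)}{\cosh(b_{\text{\tiny{max}}}|\xi|)}\widehat\varphi(\xi).\]
Since $b\le b_{\text{\tiny{max}}}$, the function $\phi_0$ is defined and harmonic on $\Omega(0,b)$, with all $z$-derivatives of $\widehat{\phi_0}$ at depth $z\le -b_{\text{\tiny{min}}}$ bounded by $e^{-b_{\text{\tiny{min}}}|\xi|}|\xi|^k|\widehat\varphi|$. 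Set $v=\phi-\phi_0$ on $\Omega(0,b)$. Then
\[\Delta v=0 \text{ in } \Omega(0,b),\qquad v|_{z=0}=0,\qquad \partial_\nu v|_{z=-b}=-\partial_\nu\phi_0|_{z=-b}=:f.\]
The flat solution satisfies $\partial_z\phi_0=0$ at $z=-b_{\text{\tiny{max}}}$, so on the region $|X|>R$ it solves the flat problem exactly there. Consequently $f$ vanishes for $|X|>R$ up to smoothness issues: $\partial_\nu\phi_0$ involves $\nabla b$, which vanishes where $b=b_{\text{\tiny{max}}}$, and $\partial_z\phi_0(\cdot,-b_{\text{\tiny{max}}})=0$. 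Thus $f$ is compactly supported in $|X|\le R$ and satisfies $\|f\|_{H^k}\le C_{k,s}\|\varphi\|_{H^{s}}$ for every $k$ and every $s$, by the exponential factor above. By construction
\[\mathcal{K}(b)\varphi=\partial_z v|_{z=0}=\partial_z\phi|_{z=0}-\partial_z\phi_0|_{z=0}.\]

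\textbf{Step 2 (elliptic regularity up to the surface).} We apply standard variational theory for the mixed Dirichlet--Neumann problem on the strip-like domain, flattening the bottom by $z\mapsto z\,b_{\text{\tiny{max}}}/b(X)$, which is smooth with bounded derivatives. This gives $v\in H^1$ with $\|\nabla v\|_{L^2}\lesssim \|f\|_{H^{-1/2}}$. Elliptic regularity near the Dirichlet boundary $z=0$, which is flat and where the data is zero, then yields $\|\partial_z v|_{z=0}\|_{H^k}\lesssim_k \|f\|_{H^{k}}$. Combined with Step 1, this shows that $\mathcal{K}(b):H^s\to H^{k}$ is bounded for all $s,k$.

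\textbf{Step 3 (from smoothing to $\mathrm{OPS}^{-\infty}$).} A continuous operator mapping $H^{-k}\to H^{k}$ for every $k$ need not have a symbol in $S^{-\infty}$ uniformly in $X$; one also needs decay and smoothness of the Schwartz kernel. We would represent $\mathcal{K}(b)$ through its kernel $K(X,Y)$, which is smooth in both variables. To get the $S^{-\infty}$ bounds
\[a(X,\xi)=e^{-iX\cdot\xi}\,\mathcal{K}(b)\,e^{i(\cdot)\cdot\xi},\]
we would apply Steps 1--2 to the data $\varphi=e^{iY\cdot\xi}$, localized appropriately. Here $f_\xi$ is supported in $|X|\le R$ with all derivatives $\lesssim e^{-b_{\text{\tiny{min}}}|\xi|}(1+|\xi|)^N$, and the $\xi$-derivatives produce polynomial factors in $X$ that are harmless on the compact support. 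Away from the support, $\partial_z v|_{z=0}$ decays in $X$, as a harmonic extension from a compactly supported Neumann source, with all derivatives bounded. This gives $|D_X^\alpha D_\xi^\beta a|\le C_{\alpha\beta N}(1+|\xi|)^{-N}$.

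We expect this last step to be the main obstacle. The subtle points are the uniformity in $X$ (decay away from the bump, which requires Green's function estimates for the strip) and handling $\xi$-derivatives of the plane-wave data. If needed, we would first try to establish only that $\mathcal{K}(b):H^s\to H^{s+N}$ for all $N$, which is what the well-posedness argument actually uses.
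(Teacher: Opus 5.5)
Your argument is correct, but it takes a different route from the paper's at both substantive steps.

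After the same splitting $\Phi=\Phi_{b_{\text{\tiny{max}}}}+\phi$, the paper proceeds by boundary integral equations:
\begin{itemize}
\item It represents $\phi$ as a single-layer potential over the compact bump $\tilde{\Gamma}_b$. The potential is built from the explicit strip Green's function: Dirichlet at $z=0$, Neumann at $z=-b_{\text{\tiny{max}}}$, with decay $e^{-k_0|X-Y|}$, $k_0=\pi/(2b_{\text{\tiny{max}}})$.
\item It solves a second-kind Fredholm equation for the density.
\item It differentiates the kernel $\partial_z G(X,0,X',-b(X'))$ under the integral; the kernel is smooth because $b\geq b_{\text{\tiny{min}}}>0$. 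Young's inequality then gives $\mathcal{K}(b):L^2\to H^k$ for all $k$.
\item It concludes $\mathcal{K}(b)\in\mathrm{OPS}^{-\infty}$ with no further argument.
\end{itemize}

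Your Lax--Milgram plus elliptic-regularity route needs no solvability theory for boundary integral equations. The paper cites that theory from the bounded-domain setting in Kress, and it would need adapting to this mixed strip geometry. Your Step 2 can also be simplified. Odd reflection across $z=0$, plus interior estimates on translates of the slab $\{|z|<b_{\text{\tiny{min}}}/2\}$, already give $\|\partial_z v|_{z=0}\|_{H^k}\lesssim\|v\|_{H^1}\lesssim\|f\|_{H^{-1/2}}$. So the smoothing comes from the distance $b_{\text{\tiny{min}}}$ alone, and you need no regularity near the curved bottom. Your damping factor $e^{-b_{\text{\tiny{min}}}|\xi|}$ is the right mechanism. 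The paper's displayed bound on $m_z$ has exponent $-|\xi|(b_{\text{\tiny{max}}}-b(X))$ where $-|\xi|b(X)$ is meant.

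Your Step 3 fills a real omission in the paper: infinite smoothing alone does not give $S^{-\infty}$ bounds. For example, take $u\mapsto(1+|X|^2)^{-2}\int\chi u\,\mathrm{d}Y$ with $\chi\in C_c^\infty$ and $\int\chi\neq0$. It maps $H^{-k}\to H^k$ for every $k$, yet $\partial_\xi^\beta$ of its symbol is unbounded in $X$ once $|\beta|\geq 5$.

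The decay you identify as the main obstacle is exactly what the paper's Green's function supplies. For $|X|>R$ the bottom is flat, so $v_\xi=\sum_n c_n(X)\cos(k_n(z+b_{\text{\tiny{max}}}))$ with $(\Delta_X-k_n^2)c_n=0$ and $k_n=(n+\tfrac12)\pi/b_{\text{\tiny{max}}}$. Hence $\partial_\xi^\beta\partial_z v_\xi(X,0)$ and its $X$-derivatives decay like $e^{-k_0|X|}$, with constants $\lesssim(1+|\xi|)^{M}e^{-b_{\text{\tiny{min}}}|\xi|}$. That decay absorbs the factors $X^\beta$ produced by $\partial_\xi^\beta e^{-iX\cdot\xi}$.

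Your fallback remark is also sound. The energy estimates behind \Cref{prop: hyperbolic} only use that $\mathcal{G}^{-1/2}\mathcal{K}(b)$ is bounded on every $H^s$.
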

The above result follows from analyzing the difference $\mathcal{G}(b)\varphi - \mathcal{G}(b_{\text{\tiny{max}}})\varphi$ using boundary integral equations, and its proof, together with that of  \Cref{prop: G_pert}, can be found in the Appendix.

\subsection{Analysis of the Wave System}
\label{sect: well-posed analysis}
Using \Cref{prop: G + K} and the fact that $\mathcal{G}^1 = \mathcal{G}(b_{\text{\tiny{max}}}) + \mathcal{C}$, we have that $\mathcal{G}(b) = \mathcal{G}^1 - \mathcal{C}+ \mathcal{K}(b)$, where $\mathcal{C}, \mathcal{K}(b) \in \mathrm{OPS}^{-\infty}$. We define
\begin{equation}
    V = \begin{bmatrix}
        \eta \\ \varphi 
    \end{bmatrix}, \quad A = \begin{bmatrix}
        -\bar{U}\cdot \nabla_X -  \nabla_X \cdot \bar{U}  \quad & -\mathcal{C} + \mathcal{K}(b) \\ 0 \quad  &-\bar{U}\cdot \nabla_X 
    \end{bmatrix}, \quad
    B = \begin{bmatrix}
        0 \quad &\mathcal{G}^1 \\
        -g \quad &0
    \end{bmatrix},
\end{equation}
 and write \eqref{eq: cauchy} as  
\begin{equation}
    \partial_t V = AV + BV, \quad V(0) = V_0 = (\eta_0,\varphi_0)^\top.
    \label{eq: U sys}
\end{equation}
We want to diagonalize the above system, and we start by diagonalizing $B$. Due to proposition \Cref{prop: G_pert}, we can write $B = S\Lambda S^{-1}$ with
\begin{equation}
\Lambda = \begin{bmatrix}
        -i\sqrt{g}\mathcal{G}^{1/2} \quad &0 \\
        0 \quad &i\sqrt{g}\mathcal{G}^{1/2}
    \end{bmatrix}, \quad S = \begin{bmatrix}
        i\frac{\mathcal{G}^{1/2}}{\sqrt{g}} \quad &  -i\frac{\mathcal{G}^{1/2}}{\sqrt{g}} \\ 1 \quad &1 
    \end{bmatrix}, \quad  S^{-1} = \frac{1}{2}\begin{bmatrix}
        -i \sqrt{g}\mathcal{G}^{-1/2}\quad  & 1  \\
        i \sqrt{g}\mathcal{G}^{-1/2} \quad &1 
    \end{bmatrix}. 
\end{equation}
We now set $Q = S^{-1}V$. Applying $S^{-1}$ to \eqref{eq: U sys}, we get    
\begin{equation}
     \frac{\partial}{\partial t} Q = (\Lambda + S^{-1}AS)Q, \quad Q(0) = S^{-1}V_0. 
     \label{eq: diag}
\end{equation}
The operator $\widetilde{A}=S^{-1}AS$ takes the form $\widetilde{A} = \widetilde{A}_1 + \widetilde{A}_0$ with
\begin{align*}
\widetilde{A}_1 &= \begin{bmatrix}
   -\frac{1}{2}\mathcal{G}^{-1/2}\bar{U}\cdot \nabla_X \mathcal{G}^{1/2} - \frac{1}{2}\bar{U}\cdot \nabla_X \quad 
   &\frac{1}{2}\mathcal{G}^{-1/2}\bar{U}\cdot \nabla_X \mathcal{G}^{1/2} - \frac{1}{2}\bar{U}\cdot \nabla_X \\
   \frac{1}{2}\mathcal{G}^{-1/2}\bar{U}\cdot \nabla_X \mathcal{G}^{1/2} - \frac{1}{2}\bar{U}\cdot \nabla_X \quad 
   &-\frac{1}{2}\mathcal{G}^{-1/2}\bar{U}\cdot \nabla_X \mathcal{G}^{1/2} - \frac{1}{2}\bar{U}\cdot \nabla_X  
\end{bmatrix}, \\
\widetilde{A}_0 &= \begin{bmatrix}
    -a_0 \quad &\overline{a}_0 \\ \hspace{2mm}a_0 \quad &-\overline{a}_0
\end{bmatrix}, \quad  a_0(X,\xi) =\mathcal{G}^{-1/2} \nabla_X \cdot \bar{U} \mathcal{G}^{1/2} + i\sqrt{g} \mathcal{G}^{-1/2}(-\mathcal{C} + \mathcal{K}(b)) 
\end{align*}
Following  Ch.  7.7 in \cite{taylor2013partial} we say that a system 
\[ \partial_t V = K V \]
is symmetric hyperbolic if the $n\times n$ matrix-valued, time-dependent PDO $K \in \mathrm{OPS}^1$ and $K^* + K \in \mathrm{OPS}^0$. Here $K^*$ denotes the Hermitian adjoint of $K$ with respect to the $L^2(\mathbb{R}^n;\mathbb{C}^d)$ inner product. For a scalar PDO $\text{Op}(a) \in \mathrm{OPS}^m$, the symbol of $\text{Op}(a)^*$ is given by $a^* =  \bar{a} + r(X,\xi)$ for $r \in S^{m -1}$, where $\bar{a}$ denotes the complex conjugate. Hence, the statement that $K^* + K \in \mathrm{OPS}^0$ can be stated as $K = K_1 + K_0$, where $K_1 \in \mathrm{OPS}^1$ is symmetric and has a purely imaginary symbol, while $K_0 \in \mathrm{OPS}^0$. The following proposition shows that $\Lambda + \widetilde{A}$ is symmetric hyperbolic. 

\begin{proposition}
    Let $\kappa_\pm(X,\xi) = i(\bar{U}\cdot\xi \pm \sqrt{g}\gamma^{1/2}(|\xi|))$. The operator $K = \Lambda + \widetilde{A}$ takes the form
    \[ K = \begin{bmatrix}
        \text{Op}(\kappa_+) \quad & 0 \\
        0 \quad &\text{Op}(\kappa_-)
    \end{bmatrix} + R, 
    \]
    where $R  \in \mathrm{OPS}^0$. 
    \label{prop: imag}
\end{proposition}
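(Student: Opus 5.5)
The plan is to compute the principal symbol of each entry of $K=\Lambda+\widetilde{A}_1+\widetilde{A}_0$ with the symbolic calculus of $\mathrm{OPS}^m$. Everything that is not of order one will go into $R$.

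\textbf{Step 1: $\widetilde{A}_0$.} By \Cref{prop: G_pert}, $\mathcal{G}^{\pm1/2}\in\mathrm{OPS}^{\pm1/2}$. Multiplication by $\nabla_X\cdot\bar{U}\in C^\infty_b$ lies in $\mathrm{OPS}^0$, so the composition $\mathcal{G}^{-1/2}(\nabla_X\cdot\bar{U})\mathcal{G}^{1/2}$ is in $\mathrm{OPS}^0$. Since $\mathcal{C},\mathcal{K}(b)\in\mathrm{OPS}^{-\infty}$ by \Cref{prop: G + K}, the term $\mathcal{G}^{-1/2}(-\mathcal{C}+\mathcal{K}(b))$ is in $\mathrm{OPS}^{-\infty}$. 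Hence $a_0$, $\overline{a}_0$ and therefore $\widetilde{A}_0$ all lie in $\mathrm{OPS}^0$. I read $\overline{a}_0$ as the operator whose symbol is conjugated; it is still of order $0$.

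\textbf{Step 2: the commutator structure of $\widetilde{A}_1$.} Write $T=\bar{U}\cdot\nabla_X=\text{Op}(i\bar{U}\cdot\xi)\in\mathrm{OPS}^1$. Then
\[
\mathcal{G}^{-1/2}T\mathcal{G}^{1/2}=T+\mathcal{G}^{-1/2}[T,\mathcal{G}^{1/2}].
\]
The commutator $[T,\mathcal{G}^{1/2}]$ has symbol $-i\{i\bar{U}\cdot\xi,\gamma^{1/2}\}$ modulo $S^{-1/2}$. Its leading part is $\bar{U}_j\partial_{X_j}\gamma^{1/2}-\partial_{\xi_j}\gamma^{1/2}\,\xi_k\partial_{X_j}\bar{U}_k$ (up to constants). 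The first term vanishes because $\gamma$ is independent of $X$, and the second is in $S^{1/2}$. So $[T,\mathcal{G}^{1/2}]\in\mathrm{OPS}^{1/2}$, and after composing with $\mathcal{G}^{-1/2}$ we get $\mathcal{G}^{-1/2}T\mathcal{G}^{1/2}=T+\mathrm{OPS}^0$.

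It follows that the off-diagonal entries $\tfrac12(\mathcal{G}^{-1/2}T\mathcal{G}^{1/2}-T)$ lie in $\mathrm{OPS}^0$, and each diagonal entry equals $-T+\mathrm{OPS}^0$. The symbol of $-T$ is $-i\bar{U}\cdot\xi$.

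\textbf{Step 3: the diagonal $\Lambda$.} The diagonal entries of $\Lambda$ are $\mp i\sqrt{g}\,\text{Op}(\gamma^{1/2})$. Adding these to $-T$ gives diagonal symbols $-i(\bar{U}\cdot\xi\pm\sqrt{g}\gamma^{1/2})$, which is $\kappa_\pm$ up to the overall sign convention. Here I will check the sign convention for $\nabla_X$ against the Fourier quantization of \Cref{def: modDN}, and for the ordering of $\pm$ against $S$. If the statement's sign is literal, this is only a matter of matching which eigenvalue is attached to which component, and it does not affect order-$0$ membership.

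The main obstacle is Step 2. The compositions involve the time-dependent symbol $\bar{U}\cdot\xi$ and non-polynomial symbols $\gamma^{\pm1/2}$, so I must verify that the composition and commutator expansions hold in the classes $S^m$ with remainders uniform in $t$. That is where the hypotheses $\bar{U}\in C^\infty_b$ and the smoothness of $\gamma$ near $\xi=0$ are used; the cutoff $h$ ensures $\gamma^{1/2}\in S^{1/2}$ is smooth at the origin.
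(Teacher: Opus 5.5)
Your proof is correct and is essentially the paper's. Both arguments use the composition rule in $\mathrm{OPS}^m$ to get $\mathcal{G}^{-1/2}(\bar{U}\cdot\nabla_X)\mathcal{G}^{1/2}=\bar{U}\cdot\nabla_X+\mathrm{OPS}^0$ (you via the commutator, the paper via the product symbol), put $\widetilde{A}_0$ and the off-diagonal entries of $\widetilde{A}_1$ into $R$, and then add the diagonal of $\Lambda$.

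Your Step 3 caveat is justified. With the stated quantization ($\nabla_X\leftrightarrow i\xi$) the diagonal symbols are $-i(\bar{U}\cdot\xi\pm\sqrt{g}\gamma^{1/2})=-\kappa_\pm$, which is an overall sign flip rather than a relabelling of components, and the paper's proof also glosses over this (it writes $u=-i\bar{U}\cdot\xi$). Contrary to your remark, this order-one discrepancy cannot literally be absorbed into $R$. It is nonetheless harmless for how the proposition is used, which only needs a diagonal, purely imaginary principal symbol.
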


\begin{proof}
For $a \in S^{m_1}, b \in S^{m_2} $, the symbol $c$ of the composition $C = \text{Op}(a)\text{Op}(b)$ takes the form $c = a b + r$ for some $r \in S^{m_1 + m_2 -1}$. 
Writing  $\text{Op}(u) = \bar{U}\cdot \nabla_X$, we have $u(t,X,\xi) = -i\bar{U}(t,X)\cdot\xi \in S^1$. The diagonal terms $\kappa$ of $\widetilde{A}_1$ has the form
\[\text{Op}(\kappa) = -\frac{1}{2}\mathcal{G}^{-1/2}\text{Op}(u) \mathcal{G}^{1/2} - \frac{1}{2}\text{Op}(u), \]
and we therefore get
\begin{align*} \kappa &= -\frac{1}{2}\gamma^{-1/2}(|\xi|)u(t,X,\xi)\gamma^{1/2}(|\xi|) -\frac{1}{2} u(t,X,\xi) + r(t,X,\xi) \\
&= -i\bar{U}(t,X)\cdot\xi + r(t,X,\xi), \quad r(t,X,\xi) \in S^0.  
\end{align*}
By the same computation, the off-diagonal terms of $\widetilde{A}_1$ have symbol $r \in S^0$. Moreover, both $\mathcal{C}$ and $\mathcal{K}(b)$ are in $\mathrm{OPS}^{-\infty}$ and $\mathcal{G}^{-1/2} \nabla_X \cdot \bar{U} \mathcal{G}^{1/2} \in \mathrm{OPS}^0$, and therefore $\widetilde{A}_0 \in \mathrm{OPS}^0$. As the diagonal terms of $\Lambda$ are 
$\pm i\sqrt{g}\gamma^{1/2}(|\xi|) $, the result now follows. \end{proof}

Existence and uniqueness of \eqref{eq: U sys} now follows from energy estimates for hyperbolic systems. The following result can be found in  Ch. 7.7 in \cite{taylor2013partial}, or, with a less compressed proof, in Ch. 8 in \cite{hintzmicro}. 

\begin{proposition}
Let $K $ be $n\times n$ matrix valued PDO with smooth time dependence and such that $K \in \mathrm{OPS}^1$ and $K^* + K \in \mathrm{OPS}^0 $. Assume $V_0 \in H^s $ (here $H^s = H^s(\R^n;\C^n) $ denotes the space of vector-valued $H^s$ functions)  and $f \in C(\mathbb{R};H^s)$. Then there exists a unique solution $V$ to the problem 
\[\partial_t V = KV + f,  \quad  V(0) = V_0 , \]
and $V \in C(\mathbb{R};H^s)$ satisfies $\|V(t)\|_{H^s} \leq C(t)\left(\|U_0\|_{H^s} + \|f\|_{C([0,t];H^s)} \right)$. 
\label{prop: hyperbolic}
\end{proposition}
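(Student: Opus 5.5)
The plan is the classical energy method for symmetric hyperbolic systems of pseudodifferential operators, following Taylor Ch.~7.7. We first prove an a priori estimate, then obtain existence by regularization and compactness, and finally get uniqueness from the estimate applied to differences.

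\textbf{A priori estimate.} Let $\Lambda_s = \text{Op}((1+|\xi|^2)^{s/2}) \in \mathrm{OPS}^s$, and let $V$ be a smooth solution. Set $W = \Lambda_s V$. Then
\[ \partial_t W = KW + [\Lambda_s, K]\Lambda_{-s} W + \Lambda_s f. \]
Since $K \in \mathrm{OPS}^1$ has smooth time dependence and the principal symbol of $\Lambda_s$ is scalar, the symbolic calculus gives $[\Lambda_s,K]\Lambda_{-s} \in \mathrm{OPS}^0$. Here scalarity matters because $K$ is matrix valued; a scalar principal symbol commutes with it, so the commutator is one order lower than $1 + s$. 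Differentiating $\|W\|_{L^2}^2$ yields
\[ \tfrac{\mathrm{d}}{\mathrm{d}t}\|W\|^2 = \langle (K+K^*)W,W\rangle + 2\,\mathrm{Re}\langle [\Lambda_s,K]\Lambda_{-s}W + \Lambda_s f, W\rangle. \]
By hypothesis $K+K^* \in \mathrm{OPS}^0$, and $\mathrm{OPS}^0$ is $L^2$-bounded (Calderón--Vaillancourt), with bounds locally uniform in $t$. Hence
\[ \tfrac{\mathrm{d}}{\mathrm{d}t}\|W\|^2 \le C(t)\left(\|W\|^2 + \|f\|_{H^s}\|W\|\right). \]
Grönwall's inequality then gives $\|V(t)\|_{H^s} \le C(t)\left(\|V_0\|_{H^s} + \|f\|_{C([0,t];H^s)}\right)$. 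Negative times are handled the same way, since $-K$ satisfies the same hypotheses.

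\textbf{Existence.} This is the step I expect to be the main obstacle: the estimate above is formal and must be applied to genuine solutions. Take a Friedrichs mollifier $J_\epsilon = \text{Op}(\chi(\epsilon\xi))$ with $\chi \in C_c^\infty$ equal to $1$ near $0$, and set $K_\epsilon = J_\epsilon K J_\epsilon$. This operator is bounded on every $H^s$, so the ODE $\partial_t V_\epsilon = K_\epsilon V_\epsilon + f$ has a unique solution in $C(\mathbb{R};H^s)$ by Picard iteration.

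The key point is uniformity in $\epsilon$. We have $K_\epsilon^* + K_\epsilon = J_\epsilon(K+K^*)J_\epsilon$, using that $J_\epsilon$ is self-adjoint. Moreover, $J_\epsilon$ is bounded uniformly in $S^0$, so $[\Lambda_s,K_\epsilon]\Lambda_{-s}$ is bounded in $\mathrm{OPS}^0$ uniformly in $\epsilon$. The estimate therefore holds for $V_\epsilon$ with $\epsilon$-independent constants.

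To pass to the limit, estimate $V_\epsilon - V_{\epsilon'}$ in $H^{s-1}$. Its equation has source $(K_\epsilon - K_{\epsilon'})V_{\epsilon'}$, which tends to $0$ in $H^{s-1}$ when the data are taken in $H^{s+1}$ (by density). This shows $V_\epsilon$ is Cauchy in $C([-T,T];H^{s-1})$. The limit $V$ solves the equation, and by weak-$*$ compactness and the uniform bound, $V \in L^\infty H^s$. Strong continuity $V \in C(\mathbb{R};H^s)$ follows from the standard approximation argument: approximate general data by smooth data and use linearity together with the uniform estimate.

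\textbf{Uniqueness.} The difference of two solutions solves the homogeneous problem with zero data. Applying the energy estimate in $H^{s-1}$, where the computation is justified because $KV \in H^{s-1}$ and $\partial_t V \in H^{s-1}$ (pair in $H^{s-1/2}$, or mollify), gives that the difference vanishes.
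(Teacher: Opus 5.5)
Your proposal is correct and is essentially the standard energy-method proof from Taylor, Ch.~7.7: conjugation by $\Lambda_s$, Friedrichs regularization $K_\epsilon = J_\epsilon K J_\epsilon$ with $\epsilon$-uniform estimates, passage to the limit through smoother data, and uniqueness from the $H^{s-1}$ estimate. That is exactly the argument the paper relies on, since it gives no proof of its own and only cites Taylor and Hintz. The one detail worth stating explicitly is that $\chi$ must be real-valued, so that $J_\epsilon$ is self-adjoint; together with the fact that $J_\epsilon$ and $\Lambda_s$ commute as Fourier multipliers, this gives $K_\epsilon^*+K_\epsilon = J_\epsilon(K+K^*)J_\epsilon$ and $[\Lambda_s,K_\epsilon]\Lambda_{-s} = J_\epsilon[\Lambda_s,K]\Lambda_{-s}J_\epsilon$, both bounded on $L^2$ uniformly in $\epsilon$.
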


The well-posedness of the Cauchy problem \eqref{eq: cauchy} now follows.  
\begin{proof}
(\Cref{prop: well-posed})
We consider the diagonalized version of \eqref{eq: cauchy}, 
\[ \partial_t Q = (\Lambda + \widetilde{A}) Q, \quad Q(0) = Q_0. \]
We have $Q_0 = S^{-1}V_0 \in H^{s+1/2}\times H^{s+1/2}$, and so by \Cref{prop: imag} and the assumption that $\bar{U} \in C^\infty_b(\R^{n+1};\R^2)$, it follows that this system satisfies the assumptions of \Cref{prop: hyperbolic}. Hence there exists a unique solution $Q \in C(\mathbb{R};H^{s+1/2}\times H^{s+1/2})$. Since $\mathcal{G}^{1/2}$ is bijective, we recover $V$ uniquely:   
\[ \begin{bmatrix}
    \eta \\ \varphi
    \end{bmatrix} = V = SQ = \begin{bmatrix}
        &\frac{i}{\sqrt{g}}\mathcal{G}^{1/2}(Q_1 -Q_2) \\
        & Q_1 + Q_2 
    \end{bmatrix}, \]
Hence we loose one half derivative in Sobolev space regularity for $\eta$ relative to $\varphi$ and get $(\eta,\varphi) \in  C(\mathbb{R};H^{s}\times H^{s+1/2})$. 
\end{proof}

\section{Asymptotic Analysis: Energy, Wave Action and Diffraction}
\label{sect: asymptotics}
Having established the general properties of the wave-current-bathymetry system, we now want to develop an understanding of how waves evolve in non-uniform environments. To this end, we conduct a Wentzel–Kramers–Brillouin (WKB) analysis (cf. \cite{whitham2011linear,buhler2014waves}) of the system \eqref{eq: linerized2d}. The WKB method provides a robust framework for studying waves where the medium varies slowly compared to the wavelength. The underlying premise is that in a linear regime, waves locally resemble plane waves even as they adapt to their surroundings. This is captured by the wave packet ansatz:
\[\eta(t,X) = e^{iS(t,X)}A(t,X).\] Here, $S$ is a phase function and $A$ is an amplitude (or envelope) function of the wave packet. The real valued wave amplitude is   $\eta = \operatorname{Re}\left\{e^{iS}A \right\}$, but working with with complex valued wave simplifies notation and calculations. Defining the local frequency and wavenumber vector as 
\begin{equation} \omega(t,X) = -\partial_t S(t,X), \quad \bm{k}(t,X) = \nabla_X S(t,X),
\label{eq: freq wavenumber}
\end{equation}
it is clear that for well-behaved $S$ and $A$, $\eta$ approximates a plane wave solution locally, i.e.,  $\eta(t,X) \approx e^{i(\bm{k} \cdot X - \omega t) + i\alpha}A_{\text{const.}}$, with $\alpha \in[0,2\pi)$. For this observation to be reasonable, we want the change of $A$ and $S$ to be small over the distance of a wavelength $\lambda = \frac{2\pi}{|\bm{k}|}$. Therefore, if $L$ is a typical length scale where the current or depth changes significantly, we want $\lambda/L \ll 1$. Hence, we introduce the small parameter\footnote{$\mu$ must not be confused with the shallow water parameter $\mu_s = H/L $(or $(H/L)^2$. } $\mu = \lambda /L$ and assume $0<\mu \ll 1 $. Defining the slow coordinates $X' = \mu X $ and $t' = \mu t$, the slow variation in $A, \bm{k}$ and $\omega$ is achieved by writing 
\[ e^{iS(t',X')}A(t',X') = e^{i\tilde{S}(\mu t,\mu X)/\mu}A(\mu t,\mu X), \quad \text{for} \quad \tilde{S},A \in C^\infty_b(\R^2).\]
\noindent
We enforce the coordinate transform by scaling the differential operators (omitting the superscript)  
\[ \partial_{t} \mapsto \partial_{t}^\mu =  \mu \partial_t  \quad \text{and} \quad \partial_{x_j} \mapsto \partial_{x_j}^\mu = \mu \partial_{x_j}, \]
and we assume that the slowly varying current $\bar{\bm{U}}$ and bathymetry $b$ scale with $\mu$. 

To proceed, we also need to find a suitable asymptotic expression of the DN operator that respects the underlying mathematical/physical structure, as this is a key component in the asymptotic analysis of equation \eqref{eq: linerized2d}, and the next subsection is dedicated to this. We then proceed to derive several important models from the asymptotic version of equation \eqref{eq: linerized2d}.

\subsection{Asymptotics of the DN-operator}
When the bottom is flat, $b(X) = b_0$, an explicit representation of the DN operator is given by the Fourier multiplier $\mathcal{G}(b_0) \varphi = |D|\tanh(b_0|D|)\varphi$. It is therefore natural to try to generalize this expression to slowly varying depth $b(X)$. Hence we define the symbol $g_b(X,\xi) = |\xi|\tanh(b(X)|\xi|)$. There are several ways to associate the symbol $g_b$ with an operator, and the procedure of constructing an operator from a symbol is called quantization \cite{zworski2012semiclassical}. The most common quantizations are the Kohn-Nirenberg (or standard) and Weyl quantizations.
To also incorporate the asymptotic scaling by the slow parameter $\mu > 0$, we will use the so-called semiclassical scaling of the associated PDOs. 
The semiclassical Kohn-Nirenberg quantization of the DN operator takes the form 
\begin{equation}
    \Gkn^\mu(b) \varphi (X) = \frac{1}{(2\pi\mu)^2} \iint e^{i(X-Y)\cdot\xi/\mu} g_b(X,\xi) \varphi(Y) \mathrm{d}Y\mathrm{d}\xi, 
\end{equation}
while the semiclassical Weyl quantization is
\begin{equation}
    \Gw^\mu(b) \varphi (X) = \frac{1}{(2\pi\mu)^2} \iint e^{i(X-Y)\cdot\xi/\mu} g_b\left(\dfrac{X+Y}{2},\xi\right) \varphi(Y) \mathrm{d}Y\mathrm{d}\xi. 
\end{equation}
The semiclassical scaling implies that $\Op^\mu((i\xi)^\alpha) = \mu^{|\alpha|}D^\alpha $, i.e., it scales derivatives by $\mu$, which is what we want for the DN operator. However, the two quantizations differ in a crucial way that ultimately dictates our choice for the asymptotic analysis:
\begin{center}
\vspace{2mm}
\begin{minipage}{0.88\textwidth}
A central property of the DN operator  $\mathcal{G}(b)$ is its self-adjointness (Ch. 3 and A.2, \cite{waterwavesprob}). This property is, for example, essential for establishing energy conservation for \eqref{eq: linerized2d} in the absence of currents. As our asymptotic analysis focuses on energy dynamics, we require an approximation of $\mathcal{G}(b)$ that preserves this self-adjoint structure. Since the Weyl quantization of real-valued symbols is (formally) self-adjoint on $L^2$, $\Gw^\mu(b)$ therefore becomes the natural choice.
\end{minipage}
\vspace{2mm}
\end{center}

We note that the Weyl quantization of the DN operator also appears in the works \cite{akrish2023mild, smit2013evolution}, although without rigorous justification. Both the Weyl and Kohn-Nirenberg quantizations are quite accurate approximations to the true operator for slowly varying bathymetry, and we refer to Ch. 4 in \cite{zworski2012semiclassical} on the self-adjointness of Weyl quantization (and the lack thereof for Kohn-Nirenberg). Moreover, the Weyl quantization has additional approximation properties that are favorable in asymptotic analysis (see  \Cref{prop: w-expansion}).

The next proposition makes this quantitative. Below, $\mathcal{G}^\mu(b)$ is the DN operator associated with the (horizontally) scaled version of the boundary value problem for $\varphi$ in \eqref{eq: DN-def}, with  $\nabla_X^\mu = \mu \nabla_X$, $\Delta_{X,z}^\mu = \mu^2 \Delta_X + \partial^2_z$ and\footnote{We omit the normalization in the normal derivative, as the Neumann condition is either 0 or equal to some function with the same normalization.} $\partial_{\nu}^\mu = \partial_z +  \nabla_X^\mu b(X) \cdot \nabla_X^\mu.$

\begin{proposition}
Let $0<\mu < 1$ and assume the depth function $b$ satisfies $b_{\text{\tiny{min}}} \geq 1$ and $\|\nabla_X b\|_{L^\infty} \leq \delta $ and $\Delta_X b = \mathcal{O}(1)$, and write $M(b) = \frac{\delta(1+ \delta)}{b_{\text{\tiny{min}}}}$. For $\varphi \in H^1\cap C^1_b$ it holds that 
\[ \|\mathcal{G}^\mu(b)\varphi - \Gw^\mu(b)\varphi \|_{L^\infty} 
 \leq C\mu^2 M(b) \|\nabla_X \varphi \|_{L^\infty} + \mathcal{O}(\mu^3).  \] 
Moreover, $\Gw^\mu(b)$ is self-adjoint on $L^2$ with domain $H^1$. 
\label{prop: G est}
\end{proposition}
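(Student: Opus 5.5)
Self-adjointness is the easy half. $g_b$ is real valued, and $g_b(X,\xi)=|\xi|\tanh(b(X)|\xi|)$ lies in $S^1$ with seminorms controlled by $b\in C^\infty_b$. Since $\Gw^\mu(b)$ is the Weyl quantization of a real symbol, its Schwartz kernel $K(X,Y)=(2\pi\mu)^{-2}\int e^{i(X-Y)\cdot\xi/\mu}g_b(\tfrac{X+Y}{2},\xi)\,\mathrm{d}\xi$ satisfies $\overline{K(Y,X)}=K(X,Y)$, so the operator is symmetric on $\mathcal{S}$ (Ch. 4 of \cite{zworski2012semiclassical}). The symbol is elliptic of order one away from $\xi=0$: $g_b\geq c|\xi|$ for $|\xi|\geq 1$ because $b\geq b_{\text{\tiny{min}}}\geq 1$. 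Hence a parametrix and G\aa rding argument gives $\|\varphi\|_{H^1}\lesssim\|\Gw^\mu(b)\varphi\|_{L^2}+\|\varphi\|_{L^2}$, so the closure has domain $H^1$. Alternatively, we can write $\Gw^\mu(b)=\Gw^\mu(b_{\text{\tiny{max}}})+\mathrm{Op}^\mu_W(g_b-g_{b_{\text{\tiny{max}}}})$. Here $g_b-g_{b_{\text{\tiny{max}}}}$ is real, compactly supported in $X$ (since $b=b_{\text{\tiny{max}}}$ for $|X|>R$), and decays exponentially in $|\xi|$, so the second term is bounded and self-adjoint. Kato--Rellich then gives self-adjointness on $H^1$, the domain of the Fourier multiplier $\mathcal{G}^\mu(b_{\text{\tiny{max}}})$. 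I would use this second route.

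For the estimate, I would construct an approximate harmonic extension and compare it with the true solution. Let $\Phi_0=\mathrm{Op}^\mu_W\!\big(\tfrac{\cosh((z+b)|\xi|)}{\cosh(b|\xi|)}\big)\varphi$, with $b=b(\tfrac{X+Y}{2})$ inside the Weyl symbol. At $z=0$ this equals $\varphi$ exactly, and $\partial_z\Phi_0|_{z=0}=\Gw^\mu(b)\varphi$. I would then compute the residuals $\Delta^\mu_{X,z}\Phi_0$ in the strip and $\partial^\mu_\nu\Phi_0$ at $z=-b$. The leading symbol $\cosh((z+b)|\xi|)/\cosh(b|\xi|)$ annihilates $-|\xi|^2+\partial_z^2$ and has zero $z$-derivative at $z=-b$. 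Every surviving term therefore carries at least one $X$-derivative of $b$ with a factor $\mu$, and these terms are $\mathcal{O}(\mu\delta)$.

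The key point, and the reason for choosing Weyl, is the order-$\mu$ part of the Moyal product $\mu^2\Delta_X\#\,a$. It equals $\tfrac{\mu}{2i}\{|\xi|^2,a\}$, and in Weyl quantization it combines with the $\xi\cdot\nabla_X b$ terms coming from the Neumann condition $\partial_z+\mu^2\nabla b\cdot\nabla$. I expect these $\mathcal{O}(\mu)$ residuals to cancel or integrate to zero at $z=0$, because the symbol is even under the Weyl symmetry. The expected mechanism is that the Weyl calculus has no $\mathcal{O}(\mu)$ term for real symbols: $\mathrm{Op}^\mu_W(a)\mathrm{Op}^\mu_W(c)=\mathrm{Op}^\mu_W(ac+\tfrac{\mu}{2i}\{a,c\})+\mathcal{O}(\mu^2)$, and the Poisson bracket terms appear antisymmetrically. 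If the cancellation is only partial, I would add a first-order corrector $\mu\Phi_1$ that solves the flat-bottom ODE in $z$ with the $\mathcal{O}(\mu)$ residual as forcing and satisfies $\Phi_1|_{z=0}=0$, and then show that $\partial_z\Phi_1|_{z=0}=0$ by an explicit integration of the $\cosh/\sinh$ kernels. This cancellation step is the main obstacle.

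Once the residual is shown to be $\mathcal{O}(\mu^2)$, set $w=\phi-\Phi_0-\mu\Phi_1$. It solves a Laplace problem with zero Dirichlet data on $z=0$, interior forcing $f$ and Neumann forcing $\psi$, both of size $\mu^2(|\nabla b|+|\nabla b|^2+|\Delta b|\,\cdot)$ times $\nabla\varphi$. A maximum-principle or barrier estimate in the strip of depth at least $b_{\text{\tiny{min}}}$ then gives $\|\partial_z w|_{z=0}\|_{L^\infty}\lesssim b_{\text{\tiny{min}}}^{-1}\,\mu^2\,\delta(1+\delta)\,\|\nabla_X\varphi\|_{L^\infty}$, which is the factor $M(b)$. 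Terms involving $\Delta_X b$ or higher derivatives of $\varphi$ are absorbed into the $\mathcal{O}(\mu^3)$ remainder. The $L^\infty$ control of the Weyl-quantized extension needs $\varphi\in C^1_b$, and the $H^1$ assumption is needed to apply the Kato--Rellich domain result.
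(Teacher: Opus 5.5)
Your self-adjointness argument is correct. The bounded-perturbation route, Kato--Rellich applied to $\Gw^\mu(b_{\text{\tiny{max}}})$ perturbed by the bounded symmetric operator $\Opw^\mu(g_b-g_{b_{\text{\tiny{max}}}})$ whose real symbol lies in $S^{-\infty}$, is more elementary than the paper's. The paper uses the same splitting but cites an essential self-adjointness theorem for real $S^1$ Weyl symbols and then identifies the domain with an elliptic parametrix.

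The estimate, however, has a genuine gap. Its central step, the order-$\mu$ cancellation, is left open, and the mechanism you give for it is wrong. The residuals of $\Phi_0$ do not cancel. The Moyal product has an order-$\mu$ term, as you write yourself. The interior residual is $\Opw^\mu\big(\pm i\mu\,\xi\cdot\nabla_X k+\tfrac{\mu^2}{4}\Delta_X k\big)\varphi$, with $\xi\cdot\nabla_X k=(\xi\cdot\nabla_X b)\,|\xi|\sinh(|\xi|z)\operatorname{sech}^2(b|\xi|)\neq 0$, and this symbol is odd, not even, in $\xi$. The bottom residual also has a nonzero order-$\mu$ part, already coming from $\mu^2\nabla_X b\cdot\nabla_X\Phi_0$.

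What is true is your second alternative: the net effect on $\partial_z$ at $z=0$ vanishes. Solving the order-$\mu$ corrector ODE in $z$ gives the Kohn--Nirenberg subprincipal symbol $-i(\xi\cdot\nabla_X b)\operatorname{sech}^2(b|\xi|)\big(1-b|\xi|\tanh(b|\xi|)\big)=-\tfrac{i}{2}\nabla_X\cdot\nabla_\xi g_b$ for $\mathcal{G}^\mu(b)$, which is exactly that of $\Gw^\mu(b)$. Equivalently, the Weyl symbol of the real self-adjoint operator $\mathcal{G}^\mu(b)$ is real and even in $\xi$, while any order-$\mu$ term must have the odd form $(\xi\cdot\nabla_X b)F(b,|\xi|)$. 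Your corrector route could therefore be completed, but only by carrying out this computation, which the proposal does not do. It is also not needed for the stated bound.

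The missing idea is where the second power of $\mu$ actually comes from. Your $\Phi_0$ is the paper's $\Phi^\mu_{a,\text{\tiny{W}}}$. Its order-$\mu$ residual symbol carries an explicit factor $\xi$, and in the semiclassical quantization $\xi_j$ acts on $\varphi$ as $-i\mu\partial_{x_j}$. So the residual is already of size $\mu^2|\nabla_X b|\,\|\nabla_X\varphi\|_{L^\infty}$, with no cancellation required.

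The paper makes this quantitative in three steps:
\begin{itemize}
\item It writes the residual as convolutions $R_{b,\alpha,i}\ast\partial_{x_i}\varphi$ whose kernels have $L^1$ norm of order $1/b$ after the rescaling $\zeta'=\mu b\zeta$. This is the source of both the $\mu^2$ and the $1/b_{\text{\tiny{min}}}$ in $\mu^2M(b)$.
\item It transfers this bound to $\partial_z(\Phi-\Phi_a)|_{z=0}$ using the strip Green's function and layer potentials.
\item For the Weyl extension it only checks that the residuals have the same structure as in the Kohn--Nirenberg case.
\end{itemize}

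Your bookkeeping is inconsistent with this in two places.
\begin{itemize}
\item You count the order-$\mu$ terms as $\mathcal{O}(\mu\delta)$ and then assign $\mu^2\delta\|\nabla_X\varphi\|_{L^\infty}$ to the post-cancellation remainder. That remainder has no term linear in $\nabla_X b$ alone, since each of its terms is quadratic in derivatives of $b$ and again carries a factor $\xi$. It would be $\mathcal{O}(\mu^3)$ in the units of the statement.
\item A maximum-principle or barrier argument cannot produce the factor $b_{\text{\tiny{min}}}^{-1}$. For $-\Delta w=f$, $w|_{z=0}=0$, $\partial_\nu w=\psi$ in a strip of depth $b$, one only gets $\|\partial_z w|_{z=0}\|_{L^\infty}\lesssim b\|f\|_{L^\infty}+\|\psi\|_{L^\infty}$, and the horizontally constant case shows this is sharp. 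The $1/b$ has to come from the size of the residual itself.
\end{itemize}
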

\noindent
The proof is quite lengthy and can be found in the Appendix. \\
\newline
\textbf{Note:} 
Large bottom variations have small effects on the surface when they appear at a depth sufficiently large, and the above result reflects this. In coastal wave modeling, it is common to assume a so-called mild slope condition. For $0 < \delta \ll 1$, it is usually stated as $\nabla_X b/b \leq |\bm{k}|\delta$, where $|\bm{k}|$ is the local wavenumber, and invoking this assumption gives rise to the mild-slope equation (cf. \cite{mei1989applied}).\\

We now continue towards the WKB analysis of \eqref{eq: linerized2d}, and the following result gives the action of $\Gw^\mu(b)$ on a wave packet.
\begin{proposition}
    Let $A,S \in C^\infty_0$ and $\eta = Ae^{iS/\mu}$. For $\bm{k} = \nabla_X S$, we write 
    \[ \sigma(X,\bm{k}) = \sqrt{g|\bm{k}|\tanh(b(X)|\bm{k}|)}, \quad C_g(X,\bm{k}) = \nabla_{\bm{k}} \sigma(X,\bm{k}). \]
We then have
\begin{equation}
    g\Gw^\mu(b) \eta = e^{iS/\mu}\left(\sigma^2 A - i\mu  2\sigma C_g \cdot \nabla_X A - i\mu \left(\nabla_X \cdot \sigma C_g \right) A  \right)+ \mathcal{O}(\mu^2).
\end{equation}

\label{prop: w-expansion}
\end{proposition}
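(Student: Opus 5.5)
Note that $g\,g_b(X,\xi) = \sigma^2(X,\xi)$, where $g$ is the gravitational constant. So the claim is a first-order semiclassical expansion of the Weyl quantization of $\sigma^2$ acting on a WKB state $Ae^{iS/\mu}$. The plan is a stationary-phase / Taylor expansion of the Weyl integral around the point $\xi=\nabla_X S(X)$. Since $g_b$ is not smooth at $\xi=0$, we assume $\bm{k}=\nabla_X S$ stays away from zero on the support of $A$; otherwise a cutoff argument as in Definition~\ref{def: modDN} handles the region near $\xi=0$.

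Concretely, substitute $Y = X+\mu Z$ in the Weyl formula for $\Gw^\mu(b)$. This gives
\[
g\Gw^\mu(b)\eta(X)=\frac{1}{(2\pi)^2}\iint e^{-iZ\cdot\xi}\,\sigma^2\!\left(X+\tfrac{\mu Z}{2},\xi\right)A(X+\mu Z)\,e^{iS(X+\mu Z)/\mu}\,\mathrm{d}Z\,\mathrm{d}\xi .
\]
Taylor expand the phase as
\[
S(X+\mu Z)/\mu = S(X)/\mu + \bm{k}\cdot Z + \tfrac{\mu}{2}Z^\top H Z + \mathcal{O}(\mu^2|Z|^3),
\]
with $H=\nabla^2 S$. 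Next write $e^{i\mu Z^\top HZ/2}=1+\tfrac{i\mu}{2}Z^\top HZ+\dots$ and expand
\[
A(X+\mu Z)=A+\mu Z\cdot\nabla A+\dots,\qquad \sigma^2(X+\mu Z/2,\xi)=\sigma^2(X,\xi)+\tfrac{\mu}{2}Z\cdot\nabla_X\sigma^2+\dots
\]
After pulling out $e^{iS(X)/\mu}$, each term has the form $\iint e^{-iZ\cdot(\xi-\bm{k})}Z^\alpha f(\xi)\,\mathrm{d}Z\,\mathrm{d}\xi/(2\pi)^2 = (i\partial_\xi)^\alpha f(\bm{k})$, by Fourier inversion and integration by parts in $\xi$.

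Collecting orders:
\begin{itemize}
\item The $\mathcal{O}(1)$ term is $\sigma^2(X,\bm{k})A$.
\item The $\mathcal{O}(\mu)$ terms are:
\begin{itemize}
\item[(a)] $i\nabla_\xi\sigma^2\cdot\nabla A = 2i\sigma C_g\cdot\nabla A$, from the amplitude;
\item[(b)] $\tfrac{i}{2}\nabla_X\cdot\nabla_\xi\sigma^2\,A$, from the $X$-dependence of the symbol at the midpoint;
\item[(c)] $\tfrac{i}{2}\cdot i^2\,\mathrm{tr}(H\nabla_\xi^2\sigma^2)A$, from the phase curvature.
\end{itemize}
\end{itemize}
The identity $\nabla_X\cdot[(\nabla_\xi\sigma^2)(X,\nabla S(X))]=(\nabla_X\cdot\nabla_\xi\sigma^2)+\mathrm{tr}(\nabla_\xi^2\sigma^2 H)$ combines (b) and (c) into $\tfrac{i}{2}\nabla_X\cdot(2\sigma C_g)\,A=i(\nabla_X\cdot\sigma C_g)A$, where the divergence is of the composite function. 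This factor $\tfrac12$ coming from the Weyl midpoint is exactly what produces the symmetric divergence form.

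The sign of the stated result, $-i\mu$, must then come out of the convention $\Op(i\xi)=\mu\nabla$. In the paper's normalization, the pairing of $e^{i(X-Y)\cdot\xi/\mu}$ with $\xi$ versus $-\xi$ decides whether $\xi$ corresponds to $\pm\bm{k}$. I will track this carefully, because the stated signs correspond to evaluating at $-\bm{k}$, or equivalently $\mu\nabla\leftrightarrow -i\xi$; $\sigma^2$ is even in $\xi$, so only the first-order terms flip.

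The main obstacle is justifying the $\mathcal{O}(\mu^2)$ remainder rigorously. The Taylor remainders carry growing powers of $Z$, which is not integrable, so the $Z$-integral must be handled via integration by parts in $\xi$ (equivalently, via the symbol-class bounds $\sigma^2\in S^1$ from Lemma~\ref{prop: G_pert}-type estimates and the $C^\infty_0$ assumption on $A,S$). I would first try writing the remainder with the integral form of Taylor's theorem, then use $Z^\alpha e^{-iZ\cdot(\xi-\bm{k})}=(i\partial_\xi)^\alpha e^{-iZ\cdot(\xi-\bm{k})}$ to move the $Z$ factors onto derivatives of $\sigma^2$. These derivatives are bounded uniformly, and this yields a bound of order $\mu^2$ in $L^\infty$.
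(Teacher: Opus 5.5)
Your route (rescaling $Y=X+\mu Z$, Taylor expanding, and recombining the midpoint term with the phase-curvature term by the chain rule) is viable. As written, though, it has a sign error that breaks the key combination step, and the ``convention'' repair you propose cannot work. The correct identity is
\[
\frac{1}{(2\pi)^2}\iint e^{-iZ\cdot(\xi-\bm{k})}Z^\alpha f(\xi)\,\mathrm{d}Z\,\mathrm{d}\xi=(-i\partial_\xi)^\alpha f(\bm{k}),
\]
because $Z^\alpha e^{-iZ\cdot(\xi-\bm{k})}=(i\partial_\xi)^\alpha e^{-iZ\cdot(\xi-\bm{k})}$ and each integration by parts in $\xi$ costs a sign. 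As a sanity check, with $f=\xi_j$ and $\bm{k}=0$ your expansion must reproduce $\Opw^\mu(\xi_j)A=-i\mu\partial_{x_j}A$.

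With your sign, the $\mathcal{O}(\mu)$ coefficients (a) and (b) carry $+i$. The even-order term (c) is $-\tfrac{i}{2}\mathrm{tr}(H\nabla_\xi^2\sigma^2)A$ under either sign. So your (b)+(c) equals $\tfrac{i}{2}\bigl[(\nabla_X\cdot\nabla_\xi\sigma^2)-\mathrm{tr}(H\nabla_\xi^2\sigma^2)\bigr]A$. That is not the divergence of the composite, so the chain-rule identity you quote does not combine them.

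Nor can a convention be responsible. Since $\sigma^2$ is even in $\xi$, flipping the sign in the kernel $e^{i(X-Y)\cdot\xi/\mu}$ (i.e.\ $\xi\mapsto-\xi$) gives literally the same operator. Also, the paper's normalization is $\Op^\mu(i\xi)=\mu\nabla$, the opposite of your $\mu\nabla\leftrightarrow-i\xi$. In it one checks directly that $\Gw^\mu(b)e^{i\bm{k}\cdot X/\mu}=g_b(\bm{k})e^{i\bm{k}\cdot X/\mu}$ for constant $b$, so symbols are evaluated at $+\bm{k}$.

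With the corrected identity the coefficients become:
(a) $=-2i\sigma C_g\cdot\nabla_XA$;
(b) $=-\tfrac{i}{2}(\nabla_X\cdot\nabla_\xi\sigma^2)A$;
(c) $=-\tfrac{i}{2}\mathrm{tr}(H\nabla_\xi^2\sigma^2)A$.
Now your chain-rule identity does apply, giving (b)+(c) $=-\tfrac{i}{2}\nabla_X\cdot\bigl[(\nabla_\xi\sigma^2)(X,\nabla_XS)\bigr]A=-i(\nabla_X\cdot\sigma C_g)A$. This is exactly the statement, with no sign adjustment.

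For comparison, the paper's argument never produces a separate curvature term. It first conjugates inside the Weyl calculus, $e^{-iS/\mu}\Opw^\mu(a)e^{iS/\mu}=\Opw^\mu\bigl(a(X,\xi+\nabla_XS)\bigr)+\mathcal{O}(\mu^2)$, which works because the midpoint expansion of $S(X)-S(Y)$ has no quadratic term. It then converts via $\Opw^\mu(p)=\Opkn^\mu\bigl(p-\tfrac{i\mu}{2}\nabla_\xi\cdot\nabla_Xp\bigr)+\mathcal{O}(\mu^2)$. Here $\nabla_Xp$ differentiates through $\nabla_XS(X)$, so it produces your (b)+(c) in one stroke. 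Finally it expands the Kohn--Nirenberg operator on $A$ and controls the remainder in $L^\infty$ with the H\"older--Zygmund lemma.

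Your computation is more elementary and shows explicitly where the divergence form comes from. The cost is that you must do the remainder by hand, and $\xi$-integration by parts alone will not do it. The Taylor remainders still depend on $Z$ (through $A(X+t\mu Z)$, $\sigma^2(X+t\mu Z/2,\xi)$ and the cubic phase remainder), so the $Z$-integral does not collapse to a delta. Moreover, $\partial_\xi^\alpha\sigma^2\in S^{1-|\alpha|}$ is not integrable over $\R^2$ for small $|\alpha|$. You also need to integrate by parts in $Z$ to gain decay in $\xi-\bm{k}$; this is possible because $Z$-derivatives of the amplitude and of the residual phase are bounded.

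Finally, your caveat about $\xi=0$ is unnecessary. $|\xi|\tanh(b|\xi|)$ is an even analytic function of $|\xi|$, hence smooth in $\xi$, and $\sigma C_g=\tfrac12\nabla_\xi\sigma^2$ is smooth even though $C_g$ alone is not.
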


\noindent
\textbf{Note:} The term $\sigma$ is often referred to as the \emph{intrinsic} frequency of the waves, while the term $C_g$ is the group velocity \cite{buhler2014waves}. The term $\frac{i\mu }{2}\left(\nabla_X \cdot C_g \right)$ in the above expansion is unique to the Weyl quantization of the DN operator.
\begin{proof}
In the following, $a = b  + \mathcal{O}_{S^m}(\mu^k)$ will mean that  $ a = b + \mu^k r$ for $r\in S^m$, and $\Op(a) = \Op(b) + \mathcal{O}_{S^m}(\mu^k) $ means $\Op(a) = \Op(b) + \mu^k\Op(r)$.
We first note that for  $S \in C^\infty_0$ and real valued and $a \in S^m$ with $m \leq 1$, we have
        \[ e^{-iS/\mu} \Opw^\mu(a) e^{ iS/\mu} u = \Opw^\mu(p) u + \mathcal{O}_{S^{m-2}}(\mu^2) \quad \text{with} \quad  p(X,\xi) = a(X,\xi + \nabla_X S). \]
This result can be found in (the proof of) Theorem 10.6 in \cite{zworski2012semiclassical} and is obtained Taylor expanding the phase with $m = (X+Y)/2$  and $d = X -Y$, so that $S(X) - S(Y) = -d \cdot \nabla_X S(m) + \mathcal{O}(|d|^3)$. 
 We now use that for a symbol $a$, the Weyl and Kohn-Nirenberg quantizations are related through the formula  $Op_{\text{\tiny{W}}}^\mu(a) = Op_{\text{\tiny{KN}}}^\mu(e^{-\frac{i\mu}{2}(\nabla_X,\nabla_\xi)} a)$ (Ch. 4,\cite{zworski2012semiclassical}). Hence \[ \Opw^\mu(a) = Op_{\text{\tiny{KN}}}^\mu(a - \frac{i\mu}{2}(\nabla_\xi \cdot \nabla_X) a ) + \mathcal{O}(\mu^2)_{S^{m-2}}.\]
 The action of $\operatorname{Op}_\text{\tiny{KN}}(a)$ on a smooth function $u$ in terms of differential operators follows by standard Taylor expansion of the symbol (cf. Theorem 9.5 in \cite{zworski2012semiclassical}), and we have  
\begin{equation}  \operatorname{Op}_\text{\tiny{KN}}(a) u = \sum_{|\alpha| \leq N} \frac{1}{\alpha!} \partial^\alpha_\xi a(X,\xi)|_{\xi = 0}(\mu D_X)^\alpha u(X) + \mathcal{O}_{S^{m - N -1}}(\mu^{N+1}).
\label{eq: KN expansion}
\end{equation}
Using this and the formula for $ e^{-iS/\mu} \Opw^\mu(a) e^{ iS/\mu} u$ and keeping terms of order $\mu^1$ or lower, we get
\begin{align*} 
g\Gw^\mu(b) \eta &= ge^{iS/\mu}\bigg( g_b(X,\nabla_X S + \xi)|_{\xi = 0} A  - i \mu (\nabla_\xi g_b(X,\nabla_X S + \xi)|_{\xi = 0} )\cdot \nabla_X A \\
& - \frac{i\mu}{2} \left(\nabla_X \cdot( \nabla_\xi g_b(X,\nabla_X S + \xi)|_{\xi = 0})\right) A\bigg)  + \mathcal{O}(\mu^2),\end{align*}
where the result holds pointwise ($\mathcal{O}_{S^{-1}}(\mu^2) \to \mathcal{O}(\mu^2) $)  by \Cref{prop: pointwise} (in the Appendix) for smooth WKB-functions. 
\end{proof}
\noindent
We are now in a position to study the asymptotic properties of the water waves system \eqref{eq: linerized2d}. 

\subsection{Asymptotic Energy Dynamics}
\label{sect: energy and action}
An important feature of the WKB analysis of wave phenomena is that it provides a simple, interpretable approximation of the wave energy dynamics. However, the presence of a variable current and bathymetry complicates the situation, and for water waves, energy dynamics is usually studied using the concept of wave action. We now briefly recall some elementary notions of energy for water waves and establish an evolution equation for the non-asymptotic energy density. From this equation, we first compute an evolution equation for the total energy, and second, we derive an equation for the asymptotic energy density. Last, we revisit the wave action equation and show that it agrees with the asymptotic energy equation.  \\

In the absence of a current, total kinetic energy of the water is given by 
\[E_{K} =  \frac{\rho}{2}\int_{\mathbb{R}^2} \int_{-b(X)}^\eta |\nabla_{X,z} \phi |^2 \mathrm{d}z \mathrm{d}X = \frac{\rho}{2}\int_{\mathbb{R}^2} \varphi \mathcal{G}(b)\varphi  \mathrm{d}X + \mathcal{O}(\varepsilon^2).  \]
Consequently, the kinetic energy density is $e_K = \varphi \mathcal{G}(b)\overline{\varphi} + \mathcal{O}(\varepsilon^2)$. 
Similarly, the total potential energy of the wave is 
\[\frac{\rho g}{2}\int_{\mathbb{R}^2} \int_{0}^\eta z \mathrm{d}z \mathrm{d}X = \frac{\rho g}{2}\int_{\mathbb{R}^2}  \mathrm{d}X + \mathcal{O}(\varepsilon^2)\]
Hence, the potential energy density is $e_P = \frac{g\rho}{2} |\eta|^2+\mathcal{O}(\varepsilon^2)$. In the continuation, we set the constant density $\rho = 1$ and denote the total energy density by $\mathcal{E} =    \frac{1}{2}\left(g |\eta|^2 + \varphi\mathcal{G}\overline{\varphi}\right)$. Note that we allow for complex $\eta$ and $\varphi$.  \\
We now derive a equation for  $\mathcal{E}$ from the system \eqref{eq: linerized2d}.
We write $L = \bar{U}\cdot \nabla_X$ and $\mathcal{G} = \mathcal{G}(b)$ and $D = \nabla_X \cdot \bar{U}$, and compute 
\begin{align*}
\partial_t \mathcal{E} &= \frac{1}{2}\left(g(\overline{\eta}\partial_t \eta  +  \eta \partial_t \overline{\eta}) + \partial_t \varphi \mathcal{G}\overline{\varphi} + \varphi \mathcal{G}\partial_t\overline{\varphi} \right) = \frac{1}{2}\left( g( (- L\eta - D \eta + \mathcal{G}\varphi) \overline{\eta} \right) \\
&+ \frac{1}{2}\left( g\eta (-L\overline{\eta} - D\overline{\eta}+ \mathcal{G}\overline{\varphi})) + (-L\varphi - g\eta)\mathcal{G}\overline{\varphi} + \varphi \mathcal{G}(-L\overline{\varphi} - g\overline{\eta}) \right) \\
& = \frac{1}{2}g\left( \overline{\eta}\mathcal{G}\varphi - \varphi \mathcal{G}\overline{\eta}\right)- \nabla_X \cdot (\bar{U}\frac{1}{2}g  |\eta|^2) - D \frac{1}{2}g|\eta|^2 - \frac{1}{2}\mathcal{G}\overline{\varphi}L\varphi - \frac{1}{2}\varphi \mathcal{G} L\overline{\varphi}.
\end{align*} 
Using the commutator $[L,\mathcal{G}] = L\mathcal{G} - \mathcal{G}L$, we write $\varphi\mathcal{G} L\overline{\varphi} = \varphi L \mathcal{G}\overline{\varphi} - \varphi [L,\mathcal{G}] \overline{\varphi}.$
After rearrangement we now have 
{\small
\begin{equation}
    \partial_t \mathcal{E} + \nabla_X \cdot (\bar{U} \mathcal{E})= \frac{1}{2}g\left( \overline{\eta}\mathcal{G}\varphi - \varphi \mathcal{G}\overline{\eta}\right) +\nabla_X \cdot \bar{U}\left(\frac{1}{2}\varphi \mathcal{G}\overline{\varphi} -\frac{1}{2}g  |\eta|^2\right)  + \frac{1}{2}\varphi [\bar{U}\cdot \nabla_X,\mathcal{G}] \overline{\varphi}.
    \label{eq: energy density}
\end{equation} }
This equation has some interesting consequences, which we now explore. 
\begin{center}
\vspace{2mm}
\underline{\textbf{Evolution of total energy}}
\vspace{2mm}
\end{center}
An interesting first interpretation of this equation is found by considering the total energy $\int_{\R^2} \mathcal{E} \mathrm{d}X$. In the following we assume all quantities are real and have sufficient smoothness and decay at infinity. Integrating \eqref{eq: energy density} over $\R^2$, we first note that the term $\int_{\R^2} \left( \eta\mathcal{G}\varphi - \varphi \mathcal{G}\eta\right) \mathrm{d}X = 0$ due to $\mathcal{G}$ being self-adjoint. Next, integrating by parts and using the self-adjointness of $\mathcal{G}$ gives
\[ \int_{\R^2} \varphi [L,\mathcal{G}] \overline{\varphi} \mathrm{d}X = -2\int_{\R^2} (\bar{U}\cdot \nabla_X \varphi) \mathcal{G} \varphi  \mathrm{d}X -  \int_{\R^2} \nabla_X \cdot \bar{U} \varphi \mathcal{G}\varphi \mathrm{d}X.   \]
Recalling now our bulk potential $\phi$ with $\bm{u} = \nabla_{X,z} \phi$, the divergence theorem gives 
\[ \int_{\Omega(0,b)} \nabla_{X,z} \cdot \left(( \bar{\bm{U}} \cdot \bm{u} )\bm{u}\right) \mathrm{d}z \mathrm{d}X = \int_{\R^2}  (\bar{U}\cdot \nabla_X \varphi) \mathcal{G} \varphi \mathrm{d}X + \mathcal{O}(\varepsilon^2\delta).\]
Expanding the bulk integrand gives after some vector calculus
\[ \nabla_{X,z} \cdot \left(( \bar{\bm{U}} \cdot \bm{u} )\bm{u}\right) = \nabla_{X,z} \cdot \left(\frac{1}{2}|\bm{u}|^2 \bar{\bm{U}}\right) + \bm{u} \cdot S(\bar{\bm{U}})\bm{u}, \quad S(\bar{\bm{U}}) = \frac{1}{2}\left( \nabla_{X,z} \bar{\bm{U}} + (\nabla_{X,z} \bar{\bm{U}})^\top \right),\]
where $S(\bar{\bm{U}}) $ is the strain tensor of $\bar{\bm{U}}$. Inserting these expressions into the integrated \eqref{eq: energy density}, we apply again the divergence theorem and find  $ \int_{\Omega(0,b)} \nabla_{X,z} \cdot \left(\frac{1}{2}|\bm{u}|^2\bar{\bm{U}}\right) = \mathcal{O}(\varepsilon^2 \delta)$. Next, the $\nabla_X \cdot \bm{U} \varphi \mathcal{G}\varphi$ terms cancel, and the integral of $\nabla_X \cdot (\bar{U}\mathcal{E})$ vanish, and we are left with
\begin{equation*}
    \frac{\mathrm{d}}{\mathrm{d}t}\int_{\R^2} \mathcal{E} \mathrm{d}X = -\int_{\R^2}\nabla_X \cdot \bar{U} \frac{1}{2}g|\eta|^2 \mathrm{d}X - \int_{\Omega(0,b)}  \bm{u} \cdot S(\bar{\bm{U}})\bm{u} \mathrm{d}z \mathrm{d}X + \mathcal{O}(\varepsilon^2\delta)
\end{equation*}
In turbulence theory, the term $\mathcal{P} = -\bm{u} \cdot S(\bar{\bm{U}})\bm{u}$ is known as the production term (cf.  Ch. 5.3 in \cite{Pope2000TurbulentFlows} or  Ch. 5.6 in \cite{henningson2001StabilityTransition}), and is the term responsible for energy transfer from the mean flow/background flow to the turbulent/oscillatory motion and vice versa. The above equation therefore shows that the change in total wave energy results from two sources, each having a clear interpretation. 
\vspace{2mm}
\begin{itemize}
    \item[$\circ$] The term $-\int_{\R^2}\nabla_X \cdot \bar{\bm{U}} \frac{1}{2}g|\eta|^2 \mathrm{d}X$ represents geometric effect of the surface current in the wave amplitude.  Since $\eta$ satisfies the continuity equation $\partial_t \eta + \nabla_X\cdot (\bar{U} \eta) = \mathcal{G}\varphi$, we get (after momentarily discarding the DN term) the classical total energy evolution 
    \[\frac{\mathrm{d}}{\mathrm{d}t} \int_{\R^2} \eta^2 \mathrm{d}X = - \int_{\R^2} \nabla_X \cdot \bar{U} \eta^2 \mathrm{d}X.\]
    Locally, this means that a diverging current ($\nabla_X \cdot \bar{U} > 0$) spreads out the wave, leading to a decrease in amplitude and hence a decrease in potential energy, and vice versa for a converging current ($\nabla_X \cdot \bar{U} < 0$). 
    \item[$\circ$] The term  $\int_{\Omega(0,b)} \mathcal{P} \mathrm{d}z \mathrm{d}X$ represents the change in kinetic wave energy due to the interaction with the background current $\bar{\bm{U}}$ in the bulk. The strain tensor $S(\bar{\bm{U}})$ is symmetric and may be decomposed into principal directions and eigenvalues $\{D_i,\lambda_i\}_{i = 1}^3$, where $\lambda_i > 0$ or $\lambda_i < 0$ represents, respectively, the volumetric expansion or contraction of $\bar{\bm{U}}$ in direction $D_i$.
    Hence, for $\bm{u}$ parallel with $D_i$, we have 
    \[ \mathcal{P}= -\lambda_i |\bm{u}|^2,\]
    and we therefore arrive at a similar mechanism as for the wave amplitude: an expanding flow in the direction of the motion causes a decrease of wave kinetic energy, while a contracting flow in the direction of the wave motion causes an increase 
    kinetic wave energy. Following \cite{Pope2000TurbulentFlows}, we may interpret this as the background flow expansion resulting from the wave doing work on the background flow and losing energy, or the background flow doing work on the wave and the wave gaining energy. 
    
\end{itemize}
\vspace{2mm}
To the best of our knowledge, this specific form of the energy evolution in wave-current interaction has not been presented before, and we therefore summarize it in a proposition. 
\begin{proposition}
    Under the assumptions of \Cref{sect: mathematical model}, the total energy of the wave system \eqref{eq: linerized2d} satisfies the evolution equation 
    \begin{equation}
    \frac{\mathrm{d}}{\mathrm{d}t}\int_{\R^2} \mathcal{E} \mathrm{d}X = -\int_{\R^2}\nabla_X \cdot \bar{U} \frac{1}{2}g|\eta|^2 \mathrm{d}X + \int_{\Omega(0,b)}  \mathcal{P} \mathrm{d}X 
\end{equation}
where $\mathcal{P} = -\bm{u} \cdot S(\bar{\bm{U}})\bm{u}$ is the production term related to the bulk interaction of the background current $\bar{\bm{U}}$ and the wave velocity $\bm{u}$. 
\label{prop: total energy evo}
\end{proposition}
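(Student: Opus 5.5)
The proof will integrate the local energy density equation \eqref{eq: energy density} over $\R^2$, taking $\eta,\varphi$ real, smooth and decaying. The steps are as follows.

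\textbf{Step 1: eliminate the exact divergence and exchange terms.} The term $\int_{\R^2}\nabla_X\cdot(\bar{U}\mathcal{E})\,\mathrm{d}X$ vanishes by the divergence theorem. The exchange term $\frac12 g\int_{\R^2}(\eta\mathcal{G}\varphi-\varphi\mathcal{G}\eta)\,\mathrm{d}X$ vanishes because $\mathcal{G}(b)$ is self-adjoint.

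\textbf{Step 2: rewrite the commutator term.} With $L=\bar{U}\cdot\nabla_X$, I will use self-adjointness of $\mathcal{G}$ together with the identity $L^*=-L-\nabla_X\cdot\bar{U}$:
\[
\int\varphi L\mathcal{G}\varphi=\int (L^*\varphi)\mathcal{G}\varphi=-\int (L\varphi)\mathcal{G}\varphi-\int(\nabla_X\cdot\bar{U})\varphi\mathcal{G}\varphi .
\]
Subtracting $\int\varphi\mathcal{G}L\varphi=\int (L\varphi)\mathcal{G}\varphi$ then gives the identity stated in the text. Half of this identity cancels the term $\frac12\int\nabla_X\cdot\bar{U}\,\varphi\mathcal{G}\varphi$. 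What remains is
\[
\frac{\mathrm d}{\mathrm dt}\int\mathcal{E}=-\int\nabla_X\cdot\bar{U}\,\tfrac12 g\eta^2-\int(\bar{U}\cdot\nabla_X\varphi)\,\mathcal{G}\varphi .
\]

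\textbf{Step 3: lift the surface integral to the bulk.} Let $\phi$ be the harmonic extension in \eqref{eq: DN-def}, so that $\bm{u}=\nabla_{X,z}\phi$. I will apply the divergence theorem to the field $(\bar{\bm{U}}\cdot\bm{u})\bm{u}$ on $\Omega(0,b)$.

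On the bottom, the boundary term vanishes since $\partial_\nu\phi=0$.

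On $z=0$, the integrand is $(\bar{\bm{U}}\cdot\bm{u})\,\partial_z\phi$, which equals $(\bar{U}\cdot\nabla_X\varphi+\bar{W}\partial_z\phi)\,\mathcal{G}\varphi$. Since $\bar{W}_0=\mathcal{O}(\delta)$, the $\bar{W}$ contribution is $\mathcal{O}(\varepsilon^2\delta)$.

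Inside $\Omega(0,b)$, I will use $\nabla\cdot\bm{u}=0$ and $\nabla\times\bm{u}=0$ (so that $\nabla\bm{u}$ is symmetric) to obtain
\[
\nabla\cdot\big((\bar{\bm{U}}\cdot\bm{u})\bm{u}\big)=\bm{u}\cdot(\nabla\bar{\bm{U}})\bm{u}+\bar{\bm{U}}\cdot(\nabla\bm{u})\bm{u}=\bm{u}\cdot S(\bar{\bm{U}})\bm{u}+\tfrac12\bar{\bm{U}}\cdot\nabla|\bm{u}|^2 .
\]
I then write $\tfrac12\bar{\bm{U}}\cdot\nabla|\bm{u}|^2=\nabla\cdot(\tfrac12|\bm{u}|^2\bar{\bm{U}})$, using $\nabla\cdot\bar{\bm{U}}=0$.

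\textbf{Step 4: control the transport flux.} A second application of the divergence theorem to $\tfrac12|\bm{u}|^2\bar{\bm{U}}$ leaves only boundary fluxes. At the bottom the flux is $\bar{\bm{U}}\cdot\nu=0$. At the top it is $\bar{W}_0|\bm{u}|^2=\mathcal{O}(\varepsilon^2\delta)$. Combining Steps 2–4 yields the proposition, modulo $\mathcal{O}(\varepsilon^2\delta)$, which is consistent with the truncation order of \eqref{eq: linerized2d}.

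\textbf{Main obstacle.} The hard step is justifying Step 3 rigorously. The proposition is stated for the linearized system, but the production term involves the bulk field and the surface value $\bar{W}_0$. Precisely, one must show that the surface contributions $\int\bar{W}_0\,\partial_z\phi\,\mathcal{G}\varphi$ and $\int\bar{W}_0|\bm{u}|^2$ are $\mathcal{O}(\varepsilon^2\delta)$ and so consistent with neglected terms. This requires the decay of $\phi$ at infinity and $H^{1}$-type bounds, which follow from Proposition~\ref{prop: well-posed} and elliptic estimates for the DN problem.

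The identity is therefore exact up to $\mathcal{O}(\varepsilon^2\delta)$. I will present it as such, interpreting the equality in the proposition at that order.
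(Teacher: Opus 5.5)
Your proposal is correct and is essentially the paper's own argument: integrate the local energy identity, remove the exchange term by self-adjointness of $\mathcal{G}$, use $L^*=-L-\nabla_X\cdot\bar{U}$ to reduce everything except the surface-divergence term to $-\int(\bar{U}\cdot\nabla_X\varphi)\,\mathcal{G}\varphi$, lift this to the bulk via the divergence theorem for $(\bar{\bm{U}}\cdot\bm{u})\bm{u}$, split off $\nabla_{X,z}\cdot(\tfrac12|\bm{u}|^2\bar{\bm{U}})$, and drop the surface fluxes, so the stated equality holds modulo $\mathcal{O}(\varepsilon^2\delta)$ exactly as in the paper's derivation. One point to make explicit: $\bar{W}_0=\mathcal{O}(\delta)$ is not literally among the stated assumptions, which give only $\bar{W}=\mathcal{O}(1)$, but it follows from the background kinematic condition $\partial_t\bar{\eta}+\bar{U}\cdot\nabla_X\bar{\eta}=\bar{W}|_{z=\bar{\eta}}$ together with $\bar{\eta},\partial_t\bar{\eta},\nabla_X\bar{\eta},\partial_z\bar{W}=\mathcal{O}(\delta)$.
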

\begin{center}
\vspace{2mm}
\underline{\textbf{Energy dynamics}}
\vspace{2mm}
\end{center}
We now change perspective and study the evolution of the energy density in the asymptotic regime. We first define the WKB amplitude and surface potential \[\eta = A_\eta e^{iS/\mu}\quad \text{ and }  \varphi = A_\varphi e^{iS/mu}.\]
Replacing now $\mathcal{G}(b)$ by the asymptotic representation $\Gw^\mu(b)$ and using  \Cref{prop: w-expansion}, we get $\mu^0$ expression for kinetic and potential energy densities are \[e_K = \varphi \Gw^\mu(b) \overline{\varphi} = \frac{1}{2g}\sigma^2 A_\varphi^2 + \mathcal{O}(\mu) \quad \text{and} \quad e_P = \frac{g}{2} |\eta|^2 = \frac{g}{2}A_\eta^2.\]
Note that these expressions are usually obtained by considering the real part of $\eta$ and $\varphi$ for $e_K$ and $e_P$ and then phase averaging the result, and are therefore referred to as phase averaged energy densities. Recalling that $\omega = -\partial_t S$ and $\bm{k} = \nabla_X S$, we get from \eqref{eq: linerized2d} that \begin{equation}
    (\partial_t^\mu + \bar{U}\cdot \nabla_X^\mu)\varphi = -g \eta \implies -i(\omega - \bar{U}\cdot \bm{k})A_\varphi e^{iS/\mu}  = -g A_\eta e^{iS/\mu} + \mathcal{O}(\mu).
    \label{eq: A rel}
\end{equation}
Using the dispersion relation $\omega = \sigma + \bar{U}\cdot \bm{k}$ we therefore have that $\sigma^2 A_\varphi^2 = g^2 A_\eta^2 + \mathcal{O}(\mu) $, and consequently we have the classical equipartition of phase averaged energy density, i.e., 
\[e_K = \frac{1}{2g}\sigma^2 A_\varphi^2 = \frac{g}{2}A_\eta = e_P.\]
In the following, we denote the total phase averaged energy density by $E = e_K + e_P$. 

We now consider the asymptotic version of equation \eqref{eq: energy density}. We have immediately that $\mathcal{E}^\mu = \frac{1}{2}(g|\eta|^2 + \varphi \Gw^\mu \overline{\varphi}) = E + \mathcal{O}(\mu)$, and after a careful expansion of the different terms, (which we shall soon consider), we find that $E$ satisfies the evolution equation
    \begin{equation}
    \partial_t E + \nabla_X \cdot (\bar{U} + C_g)\cdot \nabla_X E =  \frac{E}{\sigma} \left(  \bar{U}\cdot\nabla_X \sigma  -  C_g\cdot\nabla_X (\bar{U}\cdot \bm{k}) \right) + \mathcal{O}(\mu). 
    \label{eq: alt energy}
\end{equation}
We compute the terms on the right hand side with $\eta = A_\eta e^{iS/\mu}$ and $\varphi = -i\frac{g}{\sigma}A_\eta e^{iS/\mu}$. Using the expansion in \Cref{prop: w-expansion},  have 
{\small
\begin{align*}
    \frac{1}{2}g\left( \overline{\eta}\mathcal{G}\varphi - \varphi \mathcal{G}\overline{\eta}\right) &= \frac{1}{2}\left( A_\eta \left(\sigma^2  - i\mu \left(\nabla_X \cdot \sigma C_g\right)  -i\mu  2\sigma C_g \cdot \nabla_X  \right) \left(\frac{-ig}{\sigma}A_\eta\right) \right) \\
    &- \frac{1}{2}\left(\left(\frac{-ig}{\sigma}A_\eta\right)\left(\sigma^2  + i\mu \left(\nabla_X \cdot \sigma C_g\right)  +i\mu  2\sigma C_g \cdot \nabla_X  \right)A_\eta  \right) \\
    &= \frac{1}{2}\left(  -2\mu \left(\nabla_X \cdot \sigma C_g\right)\frac{g}{\sigma}A_\eta^2 -  2\mu gA_\eta C_g \cdot \nabla_X A_\eta - 2\mu gA_\eta \sigma C_g \cdot \nabla_X \left(\frac{A_\eta}{\sigma}\right) \right)
\end{align*}
}
Using that \[\sigma C_g \cdot \nabla_X \left(\frac{A_\eta}{\sigma}\right) = C_g \cdot \nabla_X A_\eta - \frac{A_\eta}{\sigma} C_g \cdot \nabla_X \sigma,\]
and $E = gA_\eta^2$, we therefore have that 
\begin{equation*}
    \frac{1}{2}g\left( \overline{\eta}\mathcal{G}\varphi - \varphi \mathcal{G}\overline{\eta}\right) = - \nabla_X^\mu\cdot (C_gE) + \mathcal{O}(\mu^2). 
\end{equation*}
For the commutator term, we have 
{\small
\begin{align*}
    \bar{U}\cdot \nabla_X^\mu ( \Gw^\mu \overline{\varphi} )&= \bar{U}\cdot \nabla_X^\mu\left( \frac{e^{-iS/\mu}}{g}(\sigma^2 + i2\mu \sigma C_g \cdot \nabla_X  + i \mu \nabla_X \cdot (C_g \sigma) )A_\varphi \right),   \\
    & =  -i\bar{U}\cdot \bm{k} \left( \frac{e^{-iS/\mu}}{g}(\sigma^2 + i2\mu \sigma C_g \cdot \nabla_X + i \mu \nabla_X \cdot (C_g \sigma))A_\varphi \right) \\
    &+ \frac{e^{-iS/\mu}}{g} \bar{U}\cdot \left( (2\mu \sigma \nabla_X \sigma)A_\varphi + \mu\sigma^2 \nabla_X A_\varphi \right) + \mathcal{O}(\mu^2),  
\end{align*}
}
and 
{\small
\begin{align*}
    \Gw^\mu (\bar{U}\cdot \nabla_X^\mu \overline{\varphi}) &= \Gw^\mu( e^{-iS/\mu}(\bar{U}\cdot (-i\bm{k}A_\varphi + \mu\nabla_X A_\varphi)), \\
    & =  \frac{e^{-iS/\mu}}{g}(\sigma^2 + i2\mu \sigma C_g \cdot \nabla_X  + i \mu \nabla_X \cdot (C_g \sigma) )(\bar{U}\cdot (-i\bm{k}A_\varphi + \mu\nabla_X A_\varphi)), \\
    & = \frac{e^{-iS/\mu}}{g}\left( \sigma^2 (\bar{U}\cdot (-i\bm{k}A_\varphi + \mu\nabla_X A_\varphi)) + (2\mu \sigma C_g \cdot \nabla_X  +  \mu \nabla_X \cdot (C_g \sigma))(\bar{U}\cdot \bm{k} A_\varphi  \right) + \mathcal{O}(\mu^2).
\end{align*}
}
Summing up, we find that 
\begin{align*}
    \varphi[\bar{U}\cdot\nabla_X^\mu,\Gw^\mu(b) ] \overline{\varphi} &= \frac{\sigma A_\varphi^2}{g}\left(2 \bar{U}\cdot\nabla_X^\mu \sigma - 2 C_g \cdot \nabla_X^\mu (\bar{U}\cdot \bm{k}) \right) + \mathcal{O}(\mu^2) \\
       & = \frac{2E}{\sigma}\left( \bar{U}\cdot\nabla_X^\mu \sigma -  C_g \cdot \nabla_X^\mu (\bar{U}\cdot \bm{k}) \right) +  \mathcal{O}(\mu^2).
\end{align*}
Last, the term  $\nabla_X^\mu \cdot \bar{U}\left(\frac{1}{2}\varphi \Gw^\mu \overline{\varphi} -\frac{1}{2}g  |\eta|^2\right) = \mathcal{O}(\mu^2)$, and we get \eqref{eq: alt energy} after dividing by $\mu$.
\begin{center}
\vspace{2mm}
\underline{\textbf{Wave Action}}
\vspace{2mm}
\end{center}
If one consults the literature to learn about the interaction of water waves with currents and bathymetry, one soon finds that the go-to equation is the conservation law for wave action (cf. \cite{mei1989applied,buhler2014waves,phillips1977upperocean}). Wave action is defined as the phase averaged energy density divided by the intrinsic wave frequency, i.e.,  
\[\mathcal{A} = \frac{E}{\sigma},\] 
and the canonical origin for the conservation of wave action is the paper \cite{bretherton1968wavetrains}, showing that $\mathcal{A}$ satisfies the continuity equation
\begin{equation}
    \partial_t A + \nabla_X \cdot \left((\bar{U} + C_g)\mathcal{A}\right) = 0.
    \label{eq: wave action 1}
\end{equation}
This result is found through Whitham's averaged Lagrangian method\cite{whitham2011linear}, and although this method is elegant, the actual application to water waves (Ch. 4.2 in \cite{bretherton1968wavetrains}) is rather high-level and brief. Although the wave action equation has considerable historical clout, it seems to lack a standard, transparent WKB type derivation from a set of primitive PDEs; in the standard works \cite{phillips1977upperocean,buhler2014waves,komen1994dynamics}, equation \eqref{eq: wave action 1} is just stated, and in similar works \cite{mei1989applied,johnson1997modern}, the derivations in the setting of both varying currents and bathymetry are incomplete. For the sake of completeness, we first derive \eqref{eq: wave action 1} using Whitham's method of the averaged Lagrangian, and then show that it agrees with asymptotic energy equation \eqref{eq: alt energy} coming from \eqref{eq: linerized2d}. \\

After integrating by parts the volume term in the linearized Lagrangian from \Cref{sect: mathematical model}, the action of \eqref{eq: linerized2d} becomes 
\begin{equation}
    \mathcal{S} = \int_{t_1}^{t_2} \int_{\R^2} \mathcal{L} \mathrm{d}X \mathrm{d}t, \quad  \mathcal{L} = \frac{1}{2}\varphi \mathcal{G}\overline{\varphi} + \frac{1}{2}g|\eta|^2 + \overline{\eta}(\partial_t \varphi + \bar{U}\cdot \nabla_X \varphi). 
\end{equation}
Inserting WKB solutions for $\eta$ and $\varphi$ and the using asymptotic scaling, we apply  \Cref{prop: w-expansion} and equation \eqref{eq: A rel} to obtain an $\mathcal{O}(1)$, phase-averaged\footnote{Again, due to over use of complex quantities, there is no need to actually do the phase averaging.} approximation  $\overline{\mathcal{L}}$:
\[\overline{\mathcal{L}} = \frac{\sigma^2}{g}A_\varphi^2 - \frac{1}{g}A_\varphi^2(\partial_t S + \bar{U}\cdot \nabla_X S)^2 + \mathcal{O}(\mu).  \]
Crucially, the phase-averaged Lagrangian depends only on derivatives of the phase $S$. A stationary point of the averaged action functional is now found by computing the Euler-Lagrange equations (cf. Ch. 8, \cite{evans2022partial}): 
\begin{equation}
    \partial_{A_\varphi} \overline{\mathcal{L}} = 0 \quad  \text{and} \quad  \partial_t \left(\frac{\partial \overline{\mathcal{L}} }{\partial (\partial_t S)}\right) + \partial_{x_1}\left(  \frac{\partial \overline{\mathcal{L}} }{\partial(\partial_{x_1}S) }\right) + \partial_{x_2}\left(  \frac{\overline{\mathcal{L}} }{\partial (\partial_{x_1}S) }\right)= 0.  
\end{equation}
The first equation gives the Doppler shifted dispersion relation: 
\begin{equation}
    \partial_{A} \overline{\mathcal{L}} = 0 \implies \sigma^2(\bm{k}) - (\bar{U}\cdot \bm{k} - \omega )^2 = 0 \implies \omega = \bar{U}\cdot \bm{k} \pm \sigma(\bm{k}).
    \label{eq: doppler}
\end{equation} 
The second equation takes the form 
\begin{equation*}
    \partial_t \left( (-\omega + \bar{U}\cdot \bm{k})A^2_\varphi \right) + \nabla_X \cdot ( \sigma\nabla_{\bm{k}} \sigma A^2_\varphi + \bar{U} (\bar{U}\cdot \bm{k} -  \omega) A^2_\varphi) = 0. 
\end{equation*}
Using that $ \nabla_{\bm{k}} \sigma = C_g$ and $\mathcal{A} = E/\sigma = \sigma A^2_\varphi$  we therefore get the conservation law for wave action, 
\begin{equation*}
      \partial_t \mathcal{A} + \nabla_X ((\bar{U} + C_g) \mathcal{A}) = 0.
\end{equation*}

To compare the wave action equation with our asymptotic energy equation \eqref{eq: alt energy}, we re-write it as an energy equation:
\begin{equation}
    \partial_t E + \nabla_X \cdot \left((\bar{U} + C_g)  E\right) = \frac{E}{\sigma}\left(\partial_t \sigma  + (\bar{U} + C_g)\cdot \nabla_X \sigma \right).  
    \label{eq: wave action energy form}
\end{equation}
Using now the Hamilton-Jacobi equation $\partial_t \bm{k} + \nabla_X \omega = 0$ (which is exact) and the dispersion relation \eqref{eq: doppler}, we get  \[\partial_t \sigma = \nabla_{\bm{k}}\sigma \cdot \partial_t \bm{k} = - C_g \cdot \nabla_X \omega = - C_g \cdot \nabla_X (\sigma + \bar{U}\cdot \bm{k}).\]    
Inserting this into the right hand side of \eqref{eq: wave action energy form}, we see that it agrees with equation \eqref{eq: alt energy}. In other words, the asymptotic energy equation \eqref{eq: alt energy} is equivalent to the wave action equation, and so there is nothing new under the sun. Still, we hope that our alternative route from the Euler equations to phase averaged energy dynamics have illuminated some of the details of the mechanisms involved. 

\subsubsection{Numerical Examples}
\label{sect: num energy}
We now do a computational verification of the total energy evolution in  \Cref{prop: total energy evo} and the above asymptotic energy theory. We use the computational method described in the Appendix to compute the exact energy density $\mathcal{E}$ from \eqref{eq: linerized2d}, and we compute the asymptotic energy density $E$ from \eqref{eq: wave action energy form}. To avoid diffractive effects/decay dominating our comparison (see \Cref{sect: diffraction}), we use 1D simulations. 

\begin{center}
\vspace{2mm}
\underline{\textbf{Total energy evolution}}
\vspace{2mm}
\end{center}
We take the computational domain to be $\Omega_c = [0,L]$ with $L = 2000 \ m$, and the constant depth $b = 9 \ m$.  For our current, we set 
\begin{align*}
\bar{U}_1(x_1,z) &= 1 + \frac{1}{2}\tanh((x_1 - 2L/3)/300)\cos(\pi z/b), \\
\bar{W}(x_1,z) &= -\frac{b}{2\pi 300}\frac{1}{\cosh^2((x_1 - 2L/3)/300)}\sin(\pi z/b), 
\end{align*}
with units $m/s$. We define the gaussian $B(x_1; x_c,\sigma) = \exp\left(-(x_1 - x_c)^2/(2\sigma^2)\right)$ and solve the initial value problem for \eqref{eq: linerized2d} with $\bar{U} = \bar{U}_1(x_1)$ and initial condition $\varphi_0 = 0$ 
\[\eta_0(x_1) = \frac{1}{2}B(x_1;L/2,0.04L)\cos(k_0(x_1 - L/2)), \quad k_0 = \frac{2\pi }{(L/50)}.\]
In each time step we compute the terms 
\[ I_s(t) = -\int_{\Omega_c} \partial_{x_1} \bar{U}_1(x_1) g\eta^2(t,x_1) \mathrm{d}x_1 \quad \text{and} \quad  I_b(t) = \int_{\Omega_c} \int_{-b}^0 \mathcal{P}(t,x_1,z) \mathrm{d}z \mathrm{d}x_1.\]
Defining the total energy $E_T(t) = \int_{\Omega_c} \mathcal{E}(t,x_1) \mathrm{d}x_1$, we also compute the approximation 
\[ \tilde{E}_T(t) = E_T(0) + \int_0^\top I_s(\tau) + I_b(\tau) \mathrm{d}\tau,\]
and compare it with $E_T(t)$. 
\Cref{fig: total energy evo} illustrates the setup and shows the results. The numerical results shows excellent agreement between $E_T$ and $\tilde{E}_T$. Moreover, it shows that the main driver of total energy decrease is the surface divergence term $I_s(t)$ in our specific example.   

\begin{figure}[htbp]
    \centering
    \includegraphics[width=1\linewidth]{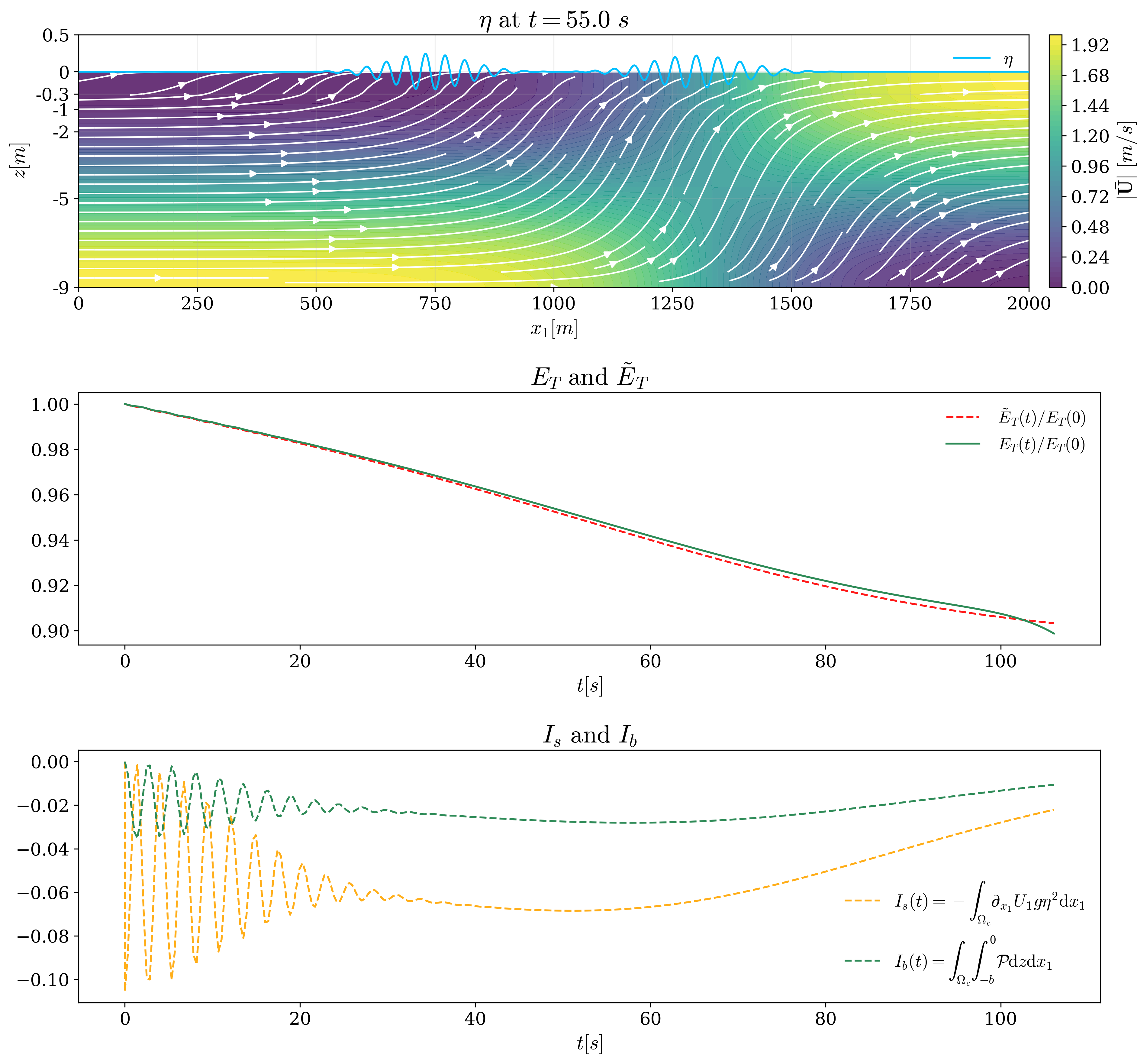}
    \caption{The first panel shows a snapshot of the propagating wave $\eta$ on top of the bulk current. In the bulk the $z$-direction is compressed for economic reasons, and the vertical velocity is slightly amplified for visibility. The second panel shows evolution of $E_T(t)$ and $\tilde{E}_T(t)$ throughout the simulation, while the third panel shows the computed values of $I_s(t)$ and $I_b(t)$. }
    \label{fig: total energy evo}
\end{figure}
    
\begin{center}
\vspace{2mm}
\underline{\textbf{Energy density dynamics over a bumpy bathymetry and current}}
\vspace{2mm}
\end{center}
We now consider the case of a moderately sloping, bumpy bathymetry and variable current. The depth profile is \[b(x_1) = 24 - 18B(x_1;2L/3,0.08L) -  14.5B(x_1;L/2,0.02L) -  3.6B(x_1;2.2L/3,0.01L).\]
We set the current to be {\small \[\bar{U}_1(x_1) = 1.2 + 0.5\left(B(x_1;2L/3,0.12L) +  0.8B(x_1;L/2,0.02L)+ 0.02B(x_1;2.2L/3,0.01L)\right), \]}
and $b$ and $\bar{U}_1$ has units $m$ and $m/s$, respectively. Such a current could, approximately, result from mass conservation over the bump for current with a suitable depth dependence. The maximal gradient of $b$ is now $\mathrm{max} |\partial_{x_1} b| \approx 0.18$ and for $\bar{U}$,  $\mathrm{max} |\partial_{x_1} \bar{U}_1| \approx 0.005$. Hence the asymptotic parameter $\mu$ is $\mu \approx 0.18$, i.e., not very small. For the initial condition we take $\varphi_0 = 0$ and \[\eta_0(x_1) = \frac{1}{2}B(x_1; x_0,0.04L)\cos(k_0(x_1-x_0))  \text{ with }  k_0 = \frac{2\pi}{(L/60)}, \quad x_0 = 3L/10. \]
In the full simulation, the initial wave splits into two components traveling in opposite direction. We track only the energy of the right-traveling component, and compare it to the wave action energy $E$. As initial condition $E_0$ for $E$, we note that for each of the right/left components, $E_0 \approx \frac{1}{2}\mathcal{E}_0 $, and so we set $ E_0 = \frac{1}{2}g(\frac{1}{2}B(x_1; 3L/10,0.04L))^2$. Moreover, some time is needed before $\mathcal{E}$ settles into an envelope-like function comparable to $E$. We therefore let the waves propagate and stabilize, and compare $\mathcal{E}$ and $E$ the subdomain $\mathcal{D}_m = [0.4 L,0.85L]$,

In this scenario the numerical simulations presented in \Cref{fig: current and bathy} shows that $E$ is an excellent approximation to $\mathcal{E}$, both in terms of pointwise accuracy and total energy.  We note that the energy density amplitudes decays as the wave propagates over the bump. This is perhaps counterintuitive, but can be explained by the fact that the group velocity is not a strictly decreasing function of depth. For stationary waves in the absence of a current, we have $ \partial_{x_1}(C_g E) =0$ and so $E_b/E_a = C_{g,a}/C_{g,b}$, where $E_a,E_b$ and $C_{g,a},C_{g,b}$ are the energy densities and group velocities at points $x_a,x_b$, respectively. As $C_g(b,k)$ takes on a maximum when $kb \approx 1.2$, which is (accidentally) approximately the value of $kb$ at the peak of the bump in our simulated example, this explains the decay in amplitude. \\
\begin{figure}[htbp]
    \centering
    \includegraphics[width=1\linewidth]{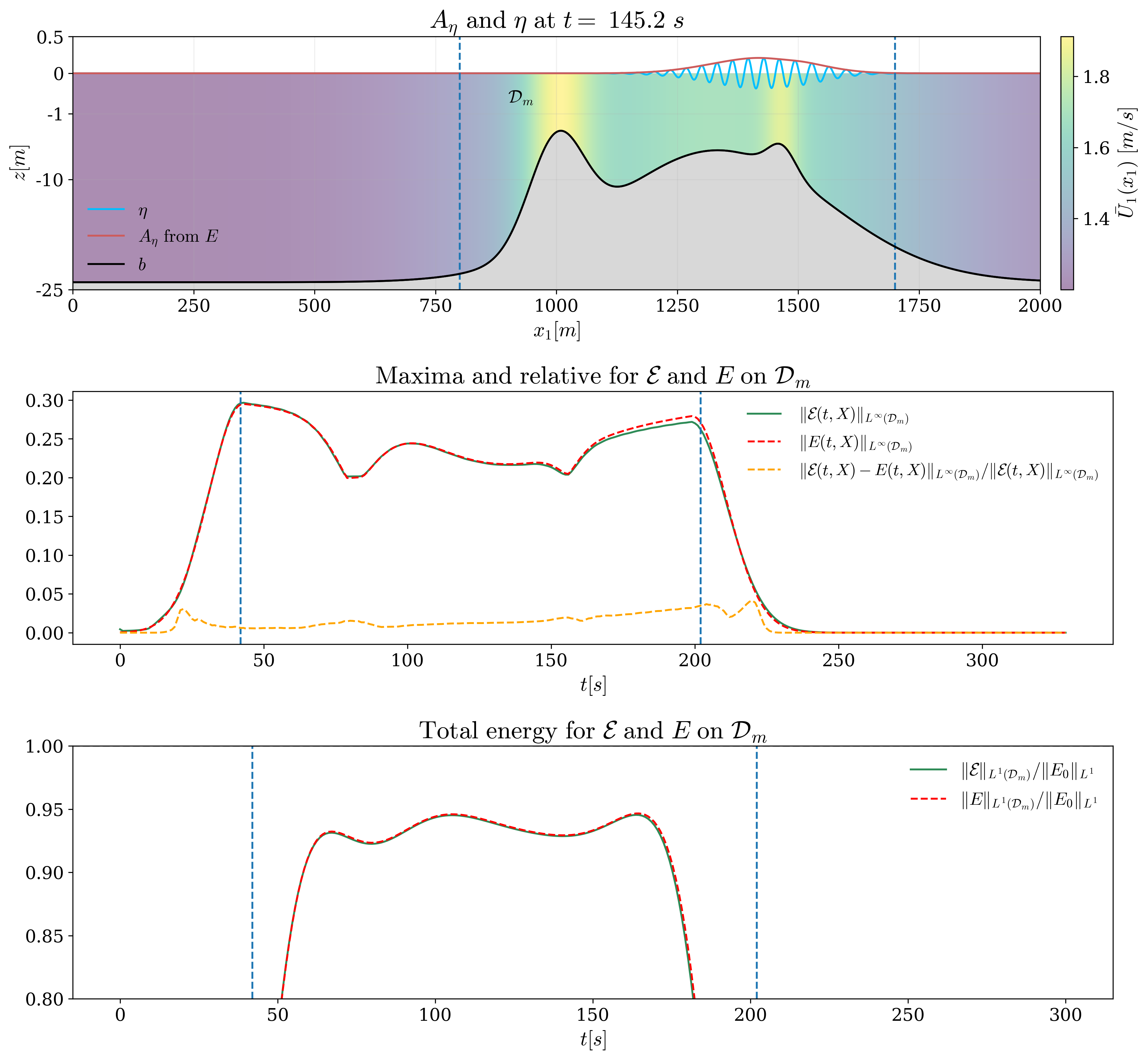}
    \caption{In the first panel we see the amplitude $A_\eta(t,X) = \sqrt{E/g}$ and the simulated wave packet $\eta(t,X)$ together with the bathymetry and 1D current. The vertical dashed lines indicate the measurement domain $\mathcal{D}_m$. In the second panel we plot the maxima and relative difference of $E$ and $\mathcal{E}$ in $\mathcal{D}_m$ as a function of time. The vertical dashed lines indicates approximately when the peak of the wave packet enters and leaves $\mathcal{D}_m$. The third panel shows the time evolution of the total energy of both $E$ and $\mathcal{E}$, both normalized by $\|E_0\|_{L^1}$}
    \label{fig: current and bathy}
\end{figure}

\subsection{Diffractive models}
\label{sect: diffraction}
The wave action equation derived in the previous section is fundamental in the understanding of large-scale wave evolution. An important reason for its applicability is that it allows using geometric wave theory. Geometric wave theory, which is often referred to as ray tracing when applied to water waves, yields an elegant and simple method for wave prediction \cite{buhler2014waves,halsne2023ocean}. 
To arrive at the ray tracing equations, we note that by the definition given in \eqref{eq: freq wavenumber}, the local wavenumber $\bm{k}$ and frequency $\omega$ of a wave packet $\eta = Ae^{iS}$ must satisfy $\partial_t \bm{k} + \nabla_X \omega = 0$. Coupling this equation with the dispersion relation $\omega(\bm{k},X)$, we get an evolution equation for $\bm{k}$, the Hamilton-Jacobi equation:
\begin{equation}
    \partial_t \bm{k} + \nabla_X \omega(\bm{k},X) = 0 \quad  \implies \quad \partial_t \bm{k} + \nabla_{\bm{k}} \omega \cdot \nabla_X \bm{k} + (\nabla_X \omega )\bm{k} = 0.
\end{equation}
Next, the characteristics of the wave action equation is given by $\dot{X}(t) = \bar{U} + C_g = \nabla_{\bm{k}} \omega$, and therefore get the system 
\begin{align*}
     \dot{X}(t) &=  \nabla_{\bm{k}}\omega(\bm{k},X),\\
      \dot{\bm{k}}(t,X(t)) &= -\nabla_X \omega(\bm{k},X) \bm{k}(t,X(t)), \\
      \dot{\mathcal{A}}(t,X(t)) &= - \nabla_X\cdot(\nabla_{\bm{k}}\omega(\bm{k},X))\mathcal{A}(t,X(t)).
\end{align*}
These equations constitute the ray tracing equations. They reduce the complex wave behavior to coupled system of ordinary differential equations for $\bm{k}$ and $\mathcal{A}$. Remarkably, this description of waves is fairly accurate, and, as we have seen, works in variable media. 
\newline 
While ray tracing is able to model effects of refraction, i.e., the effects of slowly varying currents and bathymetry, it cannot model the effect of diffraction \cite{buhler2014waves}. Diffraction is, loosely speaking, the spread of waves due to geometry; it can either be due to the geometry of the domain in which the wave propagates, i.e, due to the interaction with edges or corners, or due to the geometry of the wave itself\footnote{Or, in the words of the inventor of diffractive optics J.B. Keller, in the context of light, ``Diffraction is the process whereby light propagation differs from the predictions of geometrical optics.'' \cite{mcnamara1990introduction}}.
We now show how two standard models that that account for both refraction and diffraction can be derived from the system \eqref{eq: linerized1}. We start by deriving the so-called mild-slope equation (cf. \cite{dingemans1997water}) in the absence of currents, and continue by deriving a diffractive extension of the wave action equation in the form of a linear Schrödinger-type equation. 
\begin{center}
\vspace{2mm}
\underline{\textbf{The mild-slope equation}}
\vspace{2mm}
\end{center}
To keep the derivation simple and close to the standard framework (cf. \cite{mei1989applied}), we assume $\bar{U} = 0$ and set the asymptotic parameter $\mu = 1$. Moreover, we assume the mild-slope type condition is imposed according to  \Cref{prop: G est}, i.e., that $M(b) = \frac{|\nabla_X b|(1+|\nabla_X b|)}{b_{\text{\tiny{min}}}} \ll 1$. 
In the absence of current, \eqref{eq: linerized2d} reduces to the second order equation $\partial_t^2 \varphi + g\Gw(b)\varphi = 0$, and we now solve for a time harmonic solution $\varphi = e^{i\omega t}\psi(X)$ to the second order to this equation.  It follows that $\psi$ has to satisfy the eigenvalue problem
\begin{equation}
    g\Gw(b) \psi = \omega^2\psi.
    \label{eq: mild-slop orig}
\end{equation}  
The following proposition shows that for mild-sloping bathymetries, the solution to the mild-slope equation is an approximate solution to the above equation. 
\begin{proposition}
    For $\delta \ll 1 $, we assume the bathymetry $b(X)$ satisfies \[|\partial_{x_i}^m b| \leq \delta^m, m = 0,1,2,..., \quad \text{and} \quad  M(b) = \frac{\delta(1 +\delta)}{b_{\tiny{\text{min}}}} \ll 1. \]
    Let $\kappa_0(X) = |\bm{k}_0(X)|$ and set $\Omega(X,\kappa) = \sqrt{g\kappa\tanh(b(X)\kappa)}$. For a fixed temporal wave frequency $\omega$, let $\kappa_0(X)$ satisfy $\omega^2 = \Omega(X,\kappa_0(X))$. Moreover, set $c = \frac{\Omega(X,\kappa_0)}{\kappa_0}\partial_\kappa \Omega(X,\kappa_0)$.   If $\psi$ satisfies the mild-slope equation 
    \begin{equation}
        \nabla_X \cdot c \nabla_X \psi + \kappa_0^2c \psi = 0
        \label{eq: mild-slope}
    \end{equation}
    then 
    \[g\Gw(b) \psi = \omega^2\psi + \mathcal{O}(\delta^2).\]
\end{proposition}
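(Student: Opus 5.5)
We will prove the statement by a WKB-type expansion of $g\Gw(b)$ acting on a slowly modulated wave, using the semiclassical machinery already developed. We read $\omega^2 = \Omega^2(X,\kappa_0(X))$ and write $\sigma(X,\bm{k}) = \Omega(X,|\bm{k}|)$. The key observation is that under $|\partial^m_{x_i} b|\le \delta^m$ we may regard $b$ as a function of the slow variable $\delta X$. Rescaling $X' = \delta X$ then turns $\Gw(b)$ into the semiclassical operator $\Gw^\mu(b)$ with $\mu=\delta$. Consequently the expansions of \Cref{prop: w-expansion} and \eqref{eq: KN expansion} apply with errors $\mathcal{O}(\delta^2)$.

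\textbf{Step 1: reduce the mild-slope equation to a local symbol identity.} Write $\mathcal{M}\psi := \nabla_X\cdot(c\nabla_X\psi) + \kappa_0^2 c\psi$. Since $\Omega$ is the dispersion relation, we have $c = \Omega\,\partial_\kappa\Omega/\kappa_0 = \tfrac{1}{2}\partial_\kappa(\Omega^2)/\kappa$ evaluated at $\kappa_0$. In other words, $c\,\xi = \tfrac{1}{2}\nabla_\xi (g\,g_b)(X,\xi)$ on the shell $|\xi|=\kappa_0(X)$. I would then compute $g\Gw(b)\psi - \omega^2\psi$ through the Weyl-to-Kohn–Nirenberg relation $\Opw = \Opkn(a - \tfrac{i}{2}\nabla_\xi\cdot\nabla_X a) + \mathcal{O}(\delta^2)$. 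This step uses the fact that $X$-derivatives of $g_b$ carry a factor $\delta$, so the second-order correction is $\mathcal{O}(\delta^2)$.

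\textbf{Step 2: Taylor expand the symbol about the local wavenumber shell.} Solutions of $\mathcal{M}\psi=0$ are locally superpositions of waves with $|\nabla S| \approx \kappa_0$. I would therefore expand $a(X,\xi) := g\,g_b(X,\xi) - \omega^2$ to second order in $\xi$ around $|\xi| = \kappa_0(X)$. On the shell, $a$ vanishes, and its gradient is $2c\,\xi$. The key is to match $\Op(a)$ with $-\mathcal{M}$ as a second-order differential operator. Both operators share the same principal part on the characteristic set, namely $c(|\xi|^2-\kappa_0^2)$ to first order in $|\xi|-\kappa_0$. The Weyl choice exactly produces the symmetrized first-order term $\nabla_X c\cdot\nabla_X$. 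This is the same mechanism as the $\tfrac{i\mu}{2}\nabla_X\cdot(\sigma C_g)$ term in \Cref{prop: w-expansion}, since $\nabla_X\cdot(c\nabla_X) = c\Delta + \nabla c\cdot\nabla$ is the Weyl quantization of $-c|\xi|^2$ up to $\mathcal{O}(\delta^2)$. Concretely, I will use the WKB form $\psi = A e^{iS}$ with $|\nabla S| = \kappa_0$ and apply \Cref{prop: w-expansion}. This gives $g\Gw\psi = e^{iS}\bigl(\omega^2 A - 2i\sigma C_g\cdot\nabla A - i(\nabla\cdot\sigma C_g)A\bigr) + \mathcal{O}(\delta^2)$. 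Inserting the same ansatz into $\mathcal{M}\psi=0$ and noting that $\sigma C_g = c\nabla S$ yields the transport equation $2c\nabla S\cdot\nabla A + \nabla\cdot(c\nabla S)A = \mathcal{O}(\delta^2)$. It follows that the $\mathcal{O}(\delta)$ terms cancel, and $g\Gw(b)\psi - \omega^2\psi = \mathcal{O}(\delta^2)$.

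\textbf{Step 3: control the remainders and the discrepancy with the true DN operator.} All remainders are of the type $\mathcal{O}_{S^{m-2}}(\delta^2)$, and they become pointwise $\mathcal{O}(\delta^2)$ by \Cref{prop: pointwise} for smooth WKB data. The condition $M(b)\ll 1$ enters only if we want the conclusion also for $\mathcal{G}(b)$, through \Cref{prop: G est}. I expect the main obstacle to be Step 2. A general solution of the mild-slope equation is not a single WKB wave, so the symbol-matching must be justified for $\psi$ with frequency content concentrated near the shell. Away from it, $a$ and $-c(|\xi|^2-\kappa_0^2)$ differ at second order in $|\xi|-\kappa_0$. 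I would handle this by restricting to (superpositions of) such WKB solutions, which is the standard setting of the mild-slope derivation. The second-order mismatch is then absorbed into $\mathcal{O}(\delta^2)$, because $|\nabla S|-\kappa_0$ vanishes and $\nabla A = \mathcal{O}(\delta)$.
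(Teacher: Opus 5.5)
Your argument is correct in the setting you state, but it takes a different route from the paper.

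The paper works at the operator level. It Taylor expands the symbol in $K = |\xi|^2 - \kappa_0^2(X)$, writing $g\,g_b = \omega^2 + cK + c_1K^2 + \dots$. It then uses that $\Opw(cK) = -(\nabla_X\cdot c\nabla_X + \kappa_0^2 c)$ up to the multiplication operator $\tfrac14(\Delta_X c) = \mathcal{O}(\delta^2)$, so the truncated eigenvalue problem \emph{is} the mild-slope equation. Finally, it controls the higher-order terms on mild-slope solutions by factoring them with the Weyl composition formula, $\Opw(c_1K^2) = \Opw(c_2K)\Opw(cK) + \tfrac{1}{2i}\Opw(\{c_2K,cK\}) + \dots$ with $c_2 = c_1/c$. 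Each remainder thereby inherits the smallness of $\Opw(cK)\psi$.

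You never truncate the symbol. After rescaling $X' = \delta X$ you are in the semiclassical setting of \Cref{prop: w-expansion}, and you evaluate the exact symbol at $\xi = \nabla_X S$. Using $\sigma C_g = c\nabla_X S$ on $|\nabla_X S| = \kappa_0$, you get, for any such WKB function,
$g\Gw(b)\psi - \omega^2\psi = -(\nabla_X\cdot c\nabla_X\psi + \kappa_0^2 c\psi) + e^{iS}\nabla_X\cdot(c\nabla_X A) + \mathcal{O}(\delta^2)$.

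This buys two things:
\begin{itemize}
\item A cleaner remainder analysis: there is one expansion, the rescaled bathymetry is uniformly $C^\infty_b$, and nothing relies on the $K$-series, whose radius of convergence is finite.
\item An immediate extension to superpositions of phases with $|\nabla_X S_j| = \kappa_0$. The displayed identity is linear and does not use the mild-slope equation, so each component need not satisfy its own transport equation.
\end{itemize}

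What it does not buy is generality. It only covers $\psi$ of WKB form with slowly varying amplitude, so caustics are excluded. The paper's operator identity applies, at least formally, to any regular solution microlocalized near the characteristic set. Since the paper's argument is itself only sketched and tacitly relies on that localization, your restriction narrows the scope rather than introducing an error.
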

\begin{proof}
   The eigenvalue problem \eqref{eq: mild-slop orig} imposes the dispersion relation on $\psi$; for $u \in L^2$ satisfying $ {g\Opw(g_b -\omega/g)u = 0}$, $u$ is localized in phase-space in the characteristic set \[\{(X,\xi) : g|\xi|\tanh(b(X)|\xi|) - \omega^2 = 0\}, \]  (cf. Ch. 2.9 in \cite{martinez2002introduction}). 
Therefore, and in line with the wavenumber shift property for wave packets in  \Cref{prop: w-expansion}, we want to Taylor expand the symbol $g_b$ around a spatially varying wavenumber $\kappa_0(X) = |\xi_0(X)|$ satisfying the dispersion relation ${\omega^2 = \Omega^2(X,\kappa_0(X))}$. We write $\kappa^2 = \xi^2$ and $g_b(X,\kappa) = g\sqrt{\kappa^2}\tanh(b(X)\sqrt{\kappa^2})$, and compute \[ g_b(X,\kappa) = g_b(X,\kappa_0) + (\kappa^2 - \kappa_0^2)\left(\partial_\tau g\sqrt{\tau}\tanh(b(X)\sqrt{\tau})\right)\big|_{\tau = \kappa_0^2} + \mathcal{O}((\kappa^2 - \kappa_0^2)^2).\]
Next, we have that \[\left(\partial_\tau g\sqrt{\tau}\tanh(b(X)\sqrt{\tau})\right)\big|_{\tau = \kappa_0^2} = \partial_\tau \Omega^2(X,\sqrt{\tau})\big|_{\tau = k_0^2} = \frac{\Omega(X,k_0)}{k_0}\partial_k \Omega(X,k_0) = c,\]
Inserting the truncated symbol in the Weyl operator, we find by the same method as that in the proof of  \Cref{prop: w-expansion} that
\begin{align*}
     \Opw(g_b(X,\kappa_0(X)) + (\kappa^2 - \kappa_0^2)c)\psi = (\Omega^2(X,\kappa_0) - \kappa_0^2c)\psi  - \nabla_X \cdot c \nabla_X \psi + \mathcal{O}(\delta^2),
\end{align*}
where the $\mathcal{O}(\delta^2)$ term comes from the term $\frac{1}{4}\Delta_X c $ and the assumption $|\Delta_X b| \sim |\nabla_X b|^2 \sim \delta^2$. 
Inserting this into \eqref{eq: mild-slop orig}, we get
\[ g\Gw^\mu(b) \psi - \mu^2\omega^2 \psi = 0 \implies    \nabla_X \cdot c \nabla_X \psi + k_0^2c\psi  = 0. \]
This is the standard form of the mild-slope equation (cf. \cite{mei1989applied}). A complete error analysis is more involved, and we only outline the approach. Assuming $\psi$ solves the mild-slope equation we have $L_1 \psi = \Opw(c(\kappa^2 - \kappa_0^2)) \psi = \mathcal{O}(\delta^2)$. Now write $c_1 = \frac{1}{2}\partial^2_\tau \Omega^2(X,\sqrt{\tau})\big|_{\tau = k_0^2}$, $K = \kappa^2 - \kappa_0^2$ and $c_2 = c_1/c$. By the composition formula, the next term in the expansion of $g_b$ is
\begin{align*}
R_1 &= \Opw(c_1 K^2) = \Opw(c_2K cK) = \Opw(c_2K) L_1 + \frac{1}{2i}\Opw( \{c_2K,cK\} ) + \Opw(r_1),   
\end{align*}
where $\{c_2K,cK\} = 2\xi \cdot(c_2\nabla_X c - c\nabla_X c_2)K $. We therefore get $\Opw(\{c_3K,c_1K\} ) = \Opw(c_3\xi)L_1 + \Opw(r_2)$. The reminder terms consist of spatial derivatives of order $n\geq 2$ of functions composed with $b(X)$, and so $\Opw(r_1) \sim \Opw(r_2) = \mathcal{O}(\delta^2)$. Therefore we have  $R_1\psi =\mathcal{O}(\delta^2)$. This argument may now be recursively expanded to $R_2 \approx \Opw(K^3)$ in terms of $R_1$, so that $R_2 \psi = \mathcal{O}(\delta^3)$ and so forth. 
 \end{proof}

For a detailed analysis of the bathymetry scattering problem for the mild-slope equation, we refer to \cite{kirkeby2025imaging}.
The mild-slope equation is an elliptic wave equation, and may be considered as a time-averaged wave equation, and it retains its spatial oscillatory wave nature. This, together with the ellipticity, makes the equation challenging to solve numerically \cite{radder1979parabolic}. To overcome this, a further approximation is often imposed; by assuming a constant wavenumber $\bm{k}_0 = (k_0,0)$, i.e., unidirectional propagation, the simplest parabolic approximation of the mild-slope equation is 
\[ \partial_{x_1} A_\varphi = \frac{i}{2k_0}\left((k^2(X) - k_0^2) + \partial_{x_2}^2\right) A_\varphi.\]
We refer to (Ch. 5, \cite{Hunt1997}) or the papers \cite{radder1979parabolic,kirby1986higher} for details of the derivation. The parabolic approximation is the simplest phase-averaged model that includes the effects of diffraction, and it allows for effective numerical approximations \cite{kirby1986higher}. Moreover, the equation can be expanded to yield the non-linear Schrödinger equation, a fundamental equation for the amplitude dynamics of weakly non-linear, dispersive waves \cite{ablowitz2011nonlinear,kirby1983parabolic}. 
\begin{center}
\vspace{2mm}
    \underline{\textbf{The linear Schrödinger equation}}
    \vspace{2mm}
\end{center}
Using our formalism, we now derive a parabolic approximation for \eqref{eq: linerized2d}. After rearrangement, $\varphi$ satisfies the second order equation  
\[(\partial_t^\mu + \bar{U}\cdot \nabla_X^\mu)^2 \varphi + g\Gw^\mu \varphi - (\nabla_X^\mu \cdot \bar{U})(\partial_t^\mu + \bar{U}\cdot \nabla_X^\mu) \varphi = 0. \]
Obtaining a Schrödinger equation for the amplitude $A_\varphi$ in $\varphi = A_\varphi e^{iS/\mu}$ is now straight forward. Writing $  \mu D_{\bar{U}} = \partial_t^\mu + \bar{U}\cdot \nabla_X^\mu$, we 
have \[\mu^2 D_{\bar{U}} \varphi = e^{iS/\mu}\left( -(D_{\bar{U}} S)^2 A_\varphi - i \mu( 2(D_{\bar{U}} S) D_{\bar{U}} A_\varphi + AD_{\bar{U}}^2 S ) +\mu^2 D_{\bar{U}}^2 A_\varphi \right)\]
Defining $ D(X,\bm{k}) = \left[ \frac{1}{2}\partial^2_{k_i,k_j} \sigma(X,\bm{k}) \right]_{i,j = 1}^2$, we follow  \Cref{prop: w-expansion}, and include $\mu^2$ differential terms in the WKB expansion of $g\Gw^{\mu}$. We get 
\begin{align*}
g\Gw^\mu(b) \eta &= e^{iS/\mu}\left(\sigma^2 A - i\mu  2\sigma C_g \cdot \nabla_X A - i\mu \left(\nabla_X \cdot \sigma C_g \right) A  \right) \\
&- \mu^2 e^{iS/\mu} \sigma \left( \sum_{i,j=1}^2 D_{i,j}\partial^2_{x_i,x_j} A + \sum_{i,j=1}^2 (\partial_{x_i} D_{i,j}) \partial_{x_j} A \right) + \mathcal{O}(\mu^2),  
\end{align*}
which  contracts into the conservative divergence form:
\begin{align*}
g\Gw^\mu(b) \eta &= e^{iS/\mu}\left(\sigma^2 A - i\mu  2\sigma C_g \cdot \nabla_X A - i\mu \left(\nabla_X \cdot \sigma C_g \right) A  \right) \\
&- \mu^2 e^{iS/\mu} \sigma \nabla_X \cdot (D \nabla_X A) + \mathcal{O}(\mu^2).
\end{align*}
Using that $D_{\bar{U}}S = -\omega + \bar{U}\cdot \bm{k}$ we get at order $\mu^0$ the dispersion relation $(-\omega + \bar{U}\cdot \bm{k})^2 = \sigma^2 $. At order $\mu^1$ (but keeping the diffraction term), we get after some rearrangement the  Schrödinger equation 
\begin{equation}
\partial_t A_\varphi + (\bar{U} + C_g)\cdot \nabla_X A_\varphi + \frac{1}{2}\left(\nabla_X \cdot(\bar{U} + C_g)\right) A_\varphi + \frac{D_{\bar{U}+ C_g}\sigma }{2\sigma}A_\varphi  +\frac{i\mu}{2}\nabla_X \cdot (D \nabla_X A_\varphi) = 0. 
\label{eq: eta-schrödinger}
\end{equation} 
The above equation appears to be the natural extension of the wave action equation. An immediate consequence of the above formulation is that it recovers the stationary phase approximation in the setting of flat bottom and constant currents; for initial condition $A_\eta = A_0(X)$ we apply the Fourier transform and find
{\small 
\begin{align}
    A_\eta(t,X) &= \frac{1}{(2\pi)^2}\int_{\R^2}e^{i \bm{k}\cdot (X - (\bar{U} + C_g)t) + \frac{it}{2}\bm{k}^\top D\bm{k} } \hat{A}_0(\bm{k}) \mathrm{d}\bm{k}  = \int_{\R^2} K_{D,V}(X-Y,t) A_0(Y) \mathrm{d}Y
    \label{eq: schrödinger solution op}
\end{align}
}
where \[K_{D,V}(X-Y,t) = \frac{1}{4\pi i t \sqrt{\operatorname{det} D}}  e^{\frac{i}{4t}(X-Vt -Y)^\top D^{-1} (X-Vt -Y)}, \quad V = \bar{U} + C_g,\]
is the fundamental solution to the anisotropic Schrödinger equation with transport $V$ (cf. \cite{ortner2015fundamental}). This is the solution found by applying the method of stationary phase to the the linear Cauchy problem (cf. \cite{buhler2014waves,ablowitz2011nonlinear}). 
The Schrödinger propagator conserves the $L^2$-norm of the amplitude, i.e., $\|A_\eta(t,\cdot)\|_{L^2} = \|A_0\|_{L^2} $, which, in the light of  \Cref{sect: energy and action}, is the conservation of total phase-averaged energy.  
In addition, the above solution formula shows that the inclusion of the Schrödinger term gives the correct pointwise decay in time, i.e., the asymptotic equation now satisfies a so-called dispersive estimate:
\[ \|\eta\|_{L^\infty} \leq Ct^{-1}\|\eta_0\|_{L^1}, \quad t > 0. \]
This follows from Young's inequality applied to the convolution operator in \eqref{eq: schrödinger solution op}, and the pointwise temporal rate of decay rate $t^{-1}$ is true in general for linear water waves \cite{deneke2023dispersive}. It is, however, not true in general for continuity equations like the wave action equation. For constant current and depth there is pure transport and no pointwise decay. This indicates that the inclusion of diffraction via the Schrödinger term significantly changes the of the nature of the asymptotic approximation, even at the $\mu^1$ accuracy.

However, to use equation \eqref{eq: eta-schrödinger} together with the ray tracing equations still faces problems: the first is the unavoidable problem of caustics, i.e., points where rays emancipating from different locations collide (cf. \cite{buhler2014waves} for a very readable account of geometric optics, ray tracing and the problem of caustics). An accurate numerical solution to \eqref{eq: eta-schrödinger} requires knowing the wavenumber $\bm{k}$ in the domain of propagation. Moreover, diffraction is really a geometric effect, and it becomes significant whenever the propagating wave encounters edges, i.e., when the waves propagates in a domain with boundaries, or when wave fronts are curved. For water waves, this could be islands, fjords, harbors etc. Extending the ray tracing equations to domains with boundaries is non-trivial, but can be achieved by the method of diffractive geometric optics \cite{buhler2014waves,mcnamara1990introduction}, and we will not pursue this direction further in the current paper.

\subsubsection{Numerical examples}

We now illustrate the effect of diffraction due to a curved wavefront for waves propagating over a variable bathymetry and current. To do this, we compute the true energy density $\mathcal{E}$ from the full system \eqref{eq: linerized2d}, the phase averaged energy density $E$ from equation \eqref{eq: wave action 1}, and the diffractive energy density $E_S = \frac{g}{2}|A_\varphi|^2$, where the complex amplitude $A_\varphi$ is obtained by solving the Schrödinger equation \eqref{eq: eta-schrödinger}. 

Let $\Omega_c = [0,L_{x_1}] \times [0, L_{x_2}]$ with $L_{x_1} = 1500 \ m$ and  $L_{x_2} = 800 \ m$ . The bathymetry is given by the function 
\[ b(X) = 24 - 18B(x_1;L_{x_1},L_{x_1}/4)B(x_2;L_{x_2},L_{x_2}/4) - 10B(x_1;L_{x_1}/2,0.06L_{x_1}) \]
Inspired by the model in \cite{rypina2007lagrangian}, we set the surface current to be a type of meandering jet 
\[\bar{U} = (\mathrm{sech}^2\left(\frac{x_2 - 0.3\sigma_U\sin(3\pi x_1/L_{x_2})}{\sigma_U}\right), \frac{x_1}{2L_{x_2}})^\top, \quad \sigma_U = 0.7 L_{x_2}.\]
As before, $b$ and $\bar{U}$ has units $m$ and $m/s$, respectively. For initial conditions, we set $\varphi_0 = 0$ and 
\[\eta_0(X) = \frac{1}{2}B(x_1;L_{x_1}/5,0.04L_{x_1})B(x_2;L_{x_2}/2,0.04L_{x_1})\cos(k_0(x_1 -L_{x_1}/5)),  \]
with $k_0 = 2 \pi/(L_{x_1}/36)$. A snapshot of the propagating wave together with the current and the bathymetry (suitably rescaled for illustrative purposes) is shown in \Cref{fig: model}. Again, we track the right-moving wave energy density $\mathcal{E}$ and compare it with the asymptotic energy densities $E$ and $E_S$ on the subdomain $\mathcal{D}_m$. Initial condition for $E$ used in equation \eqref{eq: wave action energy form} is (as in \Cref{sect: num energy}) $E_0 = \frac{1}{2}g(\frac{1}{2}B(x_1;L_{x_1}/5,0.04L_{x_1})B(x_2;L_{x_2}/2,0.04L_{x_1}))^2 $, and the initial condition for $A_\varphi$ in equation \eqref{eq: eta-schrödinger} is $A_0 = \frac{1}{4}B(x_1;L_{x_1}/5,0.04L_{x_1})B(x_2;L_{x_2}/2,0.04L_{x_1})$.
Pointwise plots of the differences $\mathcal{E} - E$ and $\mathcal{E} - E_S$ are shown in \Cref{fig: dispersion}. The numerical results shown in the figure shows that $E_S$ is a significantly better approximation to $\mathcal{E}$ when dispersive effects are present; while the maximum difference $\|\mathcal{E} - E\|_{L^\infty(\mathcal{D}_m)}$ steadily increases, the corresponding $\|\mathcal{E} - E_S\|_{L^\infty(\mathcal{D}_m)}$ stays approximately a factor $10^{-1}$ smaller. This is in stark contrast to the example in \Cref{sect: num energy}, where diffractive effects are negligible and $E$ remains precise approximation to $\mathcal{E}$ throughout the simulation.

\begin{figure}[htbp]
    \centering
    \includegraphics[width=1\linewidth]{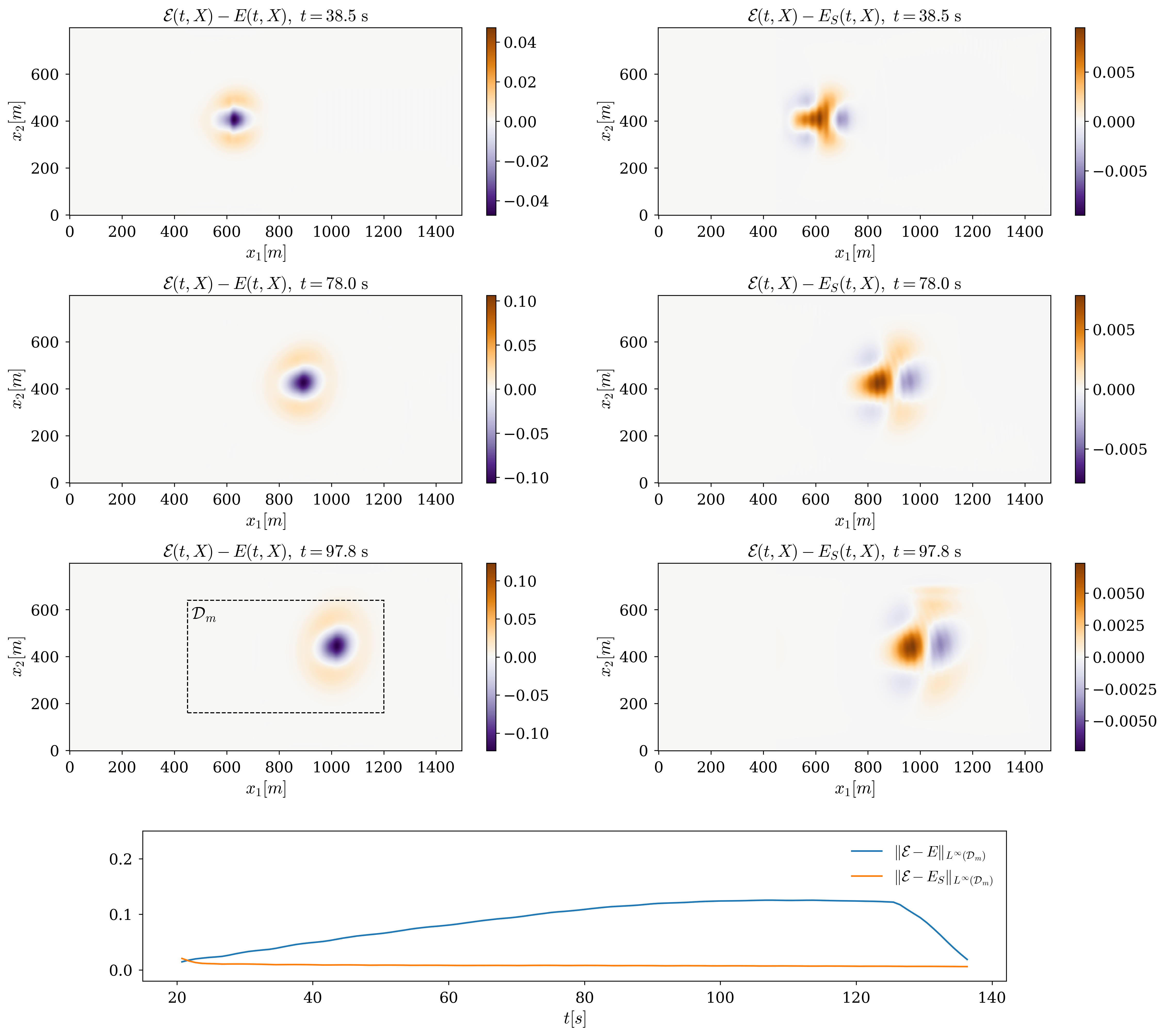}
    \caption{The left panels show the pointwise difference $\mathcal{E}(t,X) -E(t,X)$ at three different times. The lack of diffraction in $E$ manifests itself as overestimation of $\mathcal{E}$ around the peak, while underestimating it away from the peak. The pointwise difference $\mathcal{E}(t,X) -E_S(t,X)$ for the Schrödinger energy density shown in the right panels does not have a clear interpretation, but is about a factor $10^{-1}$ smaller. The lower panel shows the maximum evolution of approximation error as the wave propagates through $\mathcal{D}_m$ for the two densities. }
    \label{fig: dispersion}
\end{figure}

\subsection{Beyond WKB: phase-space dynamics}
Although WKB analysis is a powerful tool for analyzing wave propagation, it is limited by the presence of caustics in the ray tracing equations and the lack of generality in the WKB ansatz.  One approach to overcoming these limitations is to extend the object of study from the energy of a single-phase WKB solution to a more general phase-space wave energy. A way to achieve this generalization is through the introduction of the Wigner distribution, and we now apply this approach to our water waves system. For a comprehensive account of WKB analysis and Wigner distributions in the context of the Schrödinger equation we refer to \cite{jin2011mathematical}, and for an application to water waves similar in spirit to our approach, see \cite{bal2002capillary}.  \\

To measure the spectral energy density of the water waves system, we introduce the Wigner distribution $W^\mu(u,v)(X,\xi)$. For $u,v \in L^2$, the Wigner distribution is given by 
\begin{equation}
    W^\mu(u,v)(X,\xi) = \frac{1}{(2\pi \mu)^2} \int_{\R^2} e^{-iY\cdot \xi/\mu}u(X + Y/2)\overline{v}(X - Y/2)\mathrm{d}Y.
    \label{eq: wigner}
\end{equation}
When $u = v$ we write $W^\mu(u) = W^\mu(u,u) $. 
To see why the Wigner distribution is a suitable spectral energy measure for water waves, we note the following properties, valid for $\mu > 0$:
\vspace{2mm}
\begin{itemize}
    \item[$\circ$] For\footnote{Here $\hat{u}_\mu = (2\pi \mu)^{-n/2}\int_{\R^n} e^{-i X\cdot \xi/\mu} u(X) \mathrm{d}X$ denotes the semiclassical Fourier transform.} $u, \hat{u}_\mu  \in L^1 \cap L^2$ the Wigner distribution $W^\mu(u)(X,\xi)$ is real-valued and it holds that 
    \begin{equation*}
        \int_{\R^2} W^\mu(u)(X,\xi)\mathrm{d}\xi = |u(X)|^2 \quad \text{and}\quad  \int_{\R^2} W^\mu(u)(X,\xi)\mathrm{d}X = |\hat{u}_\mu(\xi)|^2.
    \end{equation*}
    For a closed, convex set $U$ such that $\text{supp}(u)  \subset U $, it holds that $W^\mu(u)(X,\xi) = 0$ when $X \notin U$. Conversely, if $\text{supp}(\hat{u}_\mu)  \subset U $, then $W^\mu(u)(X,\xi) = 0$ for  $\xi \notin U$.
    \item[$\circ$] For a WKB solution $u = Ae^{iS/\mu}$ we have (in the sense of distributions) that \[W^\mu(u)(X,\xi) = |A(X)|^2\delta(\xi -\nabla_X S(X)) \quad \text{as} \quad \mu \to 0. \]    
\end{itemize}
Together, these properties show how the Wigner distribution generalizes the WKB approach and yields a spectral energy measure: if a wave $\eta$ is of WKB type, the Wigner distribution of $\eta$ is concentrated around the phase-space location $(X,\bm{k}) = (X,\nabla_X S) \in \R^{2}\times \R^2$, where $W^\mu(u)(X,\bm{k})|_{\bm{k} = \nabla_X S(X)} \approx E(X)$. More generally, the marginalization and support formulas show that $W^\mu(u)$ provides a phase space representation such that marginals recover the spatial and spectral densities.

To apply the Wigner distribution to the water waves system, we will transform our asymptotic system and work with the energy variables $\psi = \frac{1}{2}(\sqrt{g}\eta + i \sqrt{\Gw^\mu}(b) \varphi)$, where $\sqrt{\Gw^\mu}(b) = \Opw^\mu(g_b^{1/2})$. As we will soon verify, applying $W^\mu$ to $\psi$ gives the decomposition 
\begin{equation}
    W^\mu(\psi) = \frac{1}{4}gW^\mu(\eta) + \frac{1}{4}W^\mu(\varphi,\Gw^\mu \varphi) +\frac{\sqrt{g}}{2}\text{Im} W^\mu(\eta,\sqrt{\Gw^\mu} \varphi) + \mathcal{O}(\mu).
\end{equation}
For $\eta, \varphi$ on WKB form we use the expansion of $\sqrt{\Gw^\mu}(b)$ together with the marginalization property and WKB localization of $W^\mu$ to find that $\int_{\R^2} W^\mu(\psi) \mathrm{d}\xi = \int_{\R^2} E \mathrm{d}X + \mathcal{O}(\mu)$ and that $W^\mu(\psi) = E(X)\delta(\bm{k} -\nabla_X S(X)) $ as $\mu \to 0$. 
The main drawback of the Wigner distribution is that it may take on negative values, and for this reason it is often referred to as a pseudo-energy density. We refer to \cite{grochenig2001foundations, bal2006lecture, gerard1997homogenization, jin2011mathematical} for more details on the Wigner distribution and proofs of the properties listed above.
Still, based on these considerations, we define the spectral energy density and spectral action density as, respectively, 
\begin{equation}
    E_W(X,\bm{k}) = W^{\mu}(\psi)(X,\bm{k}) \quad \text{and} \quad \mathcal{A}_W(X,\bm{k}) = \frac{E_W(X,\bm{k})}{\sigma}.
\end{equation} 
The following proposition gives the asymptotic phase-space dynamics of $E_W$ and $\mathcal{A}_W$ governed by the water waves system \eqref{eq: linerized2d}. We recall that the phase-space Poisson bracket is defined by $\{u,v\} = \nabla_{\bm{k}}u \cdot \nabla_X v - \nabla_X u \cdot \nabla_{\bm{k}} v$ and that $\sigma = \sqrt{g |\bm{k}|\tanh(b(X)|\bm{k}|)}$ and $\omega = \sigma + \bar{U}\cdot \bm{k}$.
\begin{proposition}
The spectral energy density $E_W$ of the water waves system \eqref{eq: linerized2d} satisfies the phase-space evolution equation 
\begin{equation}
    \partial_t E_W + \big\{\omega,E_W \big\} = \frac{ \partial_t \sigma + \{\omega, \sigma\}}{\sigma} E_W + \mathcal{O}(\mu),
    \label{eq: spectral energy}
\end{equation}
Consequently, the spectral action density $\mathcal{A}_W$ satisfies the equation 
\begin{equation}
    \partial_t \mathcal{A}_W + \big\{\omega,\mathcal{A}_W \big\} = \mathcal{O}(\mu). 
\label{eq: action balance}
\end{equation}
\label{prop: action balance}
\end{proposition}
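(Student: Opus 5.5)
The plan is to diagonalize the semiclassical system, derive a Wigner transport equation for the positively oriented energy variable, and use the bookkeeping of \Cref{sect: energy and action} for the source term. We start from the semiclassical system with $\mathcal{G}$ replaced by $\Gw^\mu(b)$, written in the scaled variables $\partial_t^\mu,\nabla_X^\mu$. Let $\Lambda = \sqrt{\Gw^\mu}(b) = \Opw^\mu(g_b^{1/2})$. By the Weyl composition formula, $\Lambda^2 = \Gw^\mu + \mathcal{O}_{S^{-1}}(\mu^2)$, since the $\mu^1$ term in the Moyal product of a symbol with itself vanishes. Define $\psi = \frac12(\sqrt g\eta + i\Lambda\varphi)$ and its conjugate partner $\psi_- = \frac12(\sqrt g\eta - i\Lambda\varphi)$, mirroring the diagonalization of \Cref{sect: well-posed analysis}. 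Using $L=\bar U\cdot\nabla_X^\mu$ and $D=\nabla_X^\mu\cdot\bar U$, we compute $\partial_t^\mu\psi$ from the system:
\[
\partial_t^\mu\psi = -i\sqrt g\,\Lambda\psi - L\psi - \tfrac12 D\sqrt g\,\eta + \tfrac{i}{2}[\Lambda,L]\varphi + \mathcal{O}(\mu^2).
\]
Here $[\Lambda,L]=\mathcal{O}(\mu)$ is a Weyl operator with symbol $\frac{\mu}{i}\{g_b^{1/2}, i\bar U\cdot\xi\}$ to leading order. Both $D$ and the commutator are $\mathcal{O}(\mu)$ operators, and they couple $\psi$ and $\psi_-$ through $\eta=(\psi+\psi_-)/\sqrt g$ and $\Lambda\varphi = -i(\psi-\psi_-)$. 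The result is a system
\[
\mu\partial_t\psi = \Opw^\mu(-i\omega)\psi + \mu P\psi + \mu Q\psi_- + \mathcal{O}(\mu^2),
\]
with $\omega = \sqrt g g_b^{1/2} + \bar U\cdot\xi = \sigma + \bar U\cdot\bm{k}$, where $P,Q$ are order-zero Weyl operators with explicit principal symbols $p,q$. To make the principal part exactly the Weyl quantization of $-i\omega$, we write $L = \Opw^\mu(i\bar U\cdot\xi) - \frac12 D$. This symmetrization shifts $-\frac12 D$ into $P$.

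Next we derive the Wigner equation. From the definition \eqref{eq: wigner}, for a real symbol $a$ one has the standard identity
\[
W^\mu(\Opw^\mu(a)u,u) - W^\mu(u,\Opw^\mu(a)u) = \frac{\mu}{i}\{a,W^\mu(u)\} + \mathcal{O}(\mu^3),
\]
the Moyal bracket, together with $W^\mu(\Opw^\mu(b)u,u) = bW^\mu(u) + \mathcal{O}(\mu)$. Applying these with $a=\omega$ gives
\[
\partial_t E_W + \{\omega,E_W\} = 2\,\mathrm{Re}(p)\,E_W + 2\,\mathrm{Re}\,W^\mu(Q\psi_-,\psi) + \mathcal{O}(\mu).
\]
The cross term $W^\mu(\psi_-,\psi)$ is negligible: $\psi$ is concentrated on the branch oscillating with frequency $+\omega$ and $\psi_-$ on the branch with frequency $\bar U\cdot\xi - \sigma$. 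Their cross Wigner function therefore oscillates in time at rate $2\sigma/\mu$ and tends weakly to zero (time-averaged, or in the sense of Wigner measures). I would justify this by an integration by parts in time, using $\sigma\ge c>0$ on the support away from $\xi=0$.

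The main obstacle is identifying $2\,\mathrm{Re}(p)$ with $(\partial_t\sigma + \{\omega,\sigma\})/\sigma$. I would compute $p$ explicitly from the $\frac12 D$ terms and the commutator symbol $\{g_b^{1/2},\bar U\cdot\xi\}$. The $D$ contributions cancel in the real part, exactly as the $\nabla_X\cdot\bar U(\cdot)$ term in \eqref{eq: energy density} was $\mathcal{O}(\mu^2)$ by equipartition. What remains is
\[
\frac{1}{\sigma}\left(\bar U\cdot\nabla_X\sigma - \nabla_{\bm k}\sigma\cdot\nabla_X(\bar U\cdot\bm k)\right),
\]
the phase-space analogue of the right side of \eqref{eq: alt energy}. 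Evaluated on the characteristic set, and using $\partial_t b=0$ with $\{\omega,\sigma\} = C_g\cdot\nabla_X\sigma + \bar U\cdot\nabla_X\sigma - \nabla_X(\bar U\cdot\bm k)\cdot C_g - C_g\cdot\nabla_X\sigma$, this agrees with $\{\omega,\sigma\}/\sigma$; any time dependence of $\bar U$ supplies the $\partial_t\sigma$ contribution. This establishes \eqref{eq: spectral energy}.

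Finally, \eqref{eq: action balance} follows from \eqref{eq: spectral energy} by the Leibniz rule for the transport operator $\partial_t + \{\omega,\cdot\}$ applied to $E_W/\sigma$:
\[
(\partial_t+\{\omega,\cdot\})\mathcal A_W = \frac{1}{\sigma}(\partial_t+\{\omega,\cdot\})E_W - \frac{E_W}{\sigma^2}(\partial_t\sigma+\{\omega,\sigma\}) = \mathcal{O}(\mu).
\]
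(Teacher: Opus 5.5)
Your argument is correct in substance, but it handles the $\mathcal{O}(\mu)$ coupling between the two branches differently from the paper.

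\textbf{Shared part.} Both proofs start from the same change of variables: your $\psi,\psi_-$ is the paper's $M=\Opw^\mu(m)$. Both also use the same Wigner expansions.

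\textbf{Where the routes split.} The paper removes the off-diagonal $\mathcal{O}(\mu)$ terms at the operator level. It uses a second near-identity conjugation $K=\Opw^\mu(I+\mu k)$, with $k_{12},k_{21}$ proportional to $1/(2\sigma)$. The forward component $\tilde\psi$ then obeys a scalar equation with $\mathcal{O}(\mu^2)$ error, and only then is the Wigner transform taken. You keep the coupling and dispose of the cross term $W^\mu(Q\psi_-,\psi)$ by non-stationary phase in time. Since $\mu\partial_t W^\mu(\psi_-,\psi)=2i\sigma W^\mu(\psi_-,\psi)+\mathcal{O}(\mu)$, pairing with a test function in $t$ and integrating by parts makes the cross term $\mathcal{O}(\mu)$. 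This is the standard Wigner-measure argument (cf.\ \cite{gerard1997homogenization}). The same spectral gap $2\sigma$ drives both approaches: it is the denominator of $k$ in one and the oscillation frequency in the other.

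\textbf{What each buys.}
\begin{itemize}
\item The paper gets a decoupled equation valid pointwise in time. However, that equation is for the corrected variable $\tilde\psi$, and it requires matrix-symbol bookkeeping such as $\{m,m^{-1}\}=0$.
\item Your route works directly with $E_W=W^\mu(\psi)$ as defined, at the price that the remainder is only $\mathcal{O}(\mu)$ weakly in $t$.
\item That price is not a real loss. The paper's closing replacement of $W^\mu(\tilde\psi)$ by $W^\mu(\psi)$ inside $\partial_t$ is also only harmless after time averaging. Its $\mathcal{O}(\mu)$ correction is $\mu$ times a cross-branch Wigner term oscillating at frequency $2\sigma/\mu$, so its time derivative is $\mathcal{O}(1)$ pointwise.
\end{itemize}

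\textbf{Small fixes.}
\begin{itemize}
\item Commutator sign: from $\partial_t^\mu\varphi=-L\varphi-g\eta$ you get $-\tfrac{i}{2}[\Lambda,L]\varphi$ in $\partial_t^\mu\psi$, not $+\tfrac{i}{2}[\Lambda,L]\varphi$. With your sign the source would come out as $-\{\omega,\sigma\}/\sigma$.
\item With the correct sign, the $\tfrac12 D$ terms cancel exactly in $p$ and survive only in $q$. You then get $p=-\{\sigma,\bar{U}\cdot\xi\}/(2\sigma)$.
\item It follows that $2\,\mathrm{Re}\,p=\{\omega,\sigma\}/\sigma=\sigma^{-1}\big(\bar{U}\cdot\nabla_X\sigma-C_g\cdot\nabla_X(\bar{U}\cdot\bm{k})\big)$. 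This holds identically on phase space, so no restriction to the characteristic set is needed.
\item Finally, $\partial_t\sigma\equiv 0$. Here $\sigma(X,\bm{k})$ depends only on the stationary $b$, so a time-dependent $\bar{U}$ does not enter it.
\end{itemize}
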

Equation \eqref{eq: action balance} is often referred to as the action balance equation in oceanography, and equipped with suitable source terms, it is the main model used in wave forecasting \cite{komen1994dynamics}. A recent article discussing its origin, interpretation and applications (together with a formal derivation using the Wigner distribution) is \cite{akrish2023mild}.

\subsubsection{Numerical examples}
Using \eqref{eq: action balance}, we expand the ray tracing approach to phase-space $(X,\bm{k})$. The characteristics of \eqref{eq: spectral energy} are given by 
\[ \dot{X} = \nabla_{\bm{k}} \omega(X,\bm{k}), \quad \dot{\bm{k}} = -\nabla_X \omega(X,\bm{k}).  \]
As $\nabla_{\bm{k}} \omega$ and $\nabla_X \omega$ are smooth vector fields, the characteristic curve $(X(t),\bm{k}(t))$ is a smooth path in $\R^{2} \times \R^2$, and we now apply the phase space formulation to study wave blocking.  
\begin{center}
\vspace{2mm}
    \underline{\textbf{Wave blocking}}
    \vspace{2mm}
\end{center}
When waves propagate on an opposing, accelerating current, an interesting phenomenon known as \emph{wave blocking} occurs: at the point where the opposing current velocity becomes equal in magnitude to the waves group velocity, the ray tracing equations predicts a stationary point $\dot{X}(t) = \bar{U} + C_g = 0$, i.e., a point which the wave cannot pass. Consequently, the ray tracing equations predict an unbounded increase of energy/wave action at this point \cite{peregrine1976interaction,phillips1977upperocean}. This behavior is then interpreted as causing wave breaking (and therefore being far outside the scope of linear wave theory) \cite{phillips1977upperocean}, or analyzed further using asymptotic matching methods \cite{peregrine1976interaction}. The latter analysis shows that under certain scenarios, the wave increases in steepness and eventually turns around and propagates in the same direction as the current, while continually getting steeper. It is claimed that such behavior is realistic only for incoming waves with very small initial steepness. \\

We now examine this phenomena numerically, using the wave system \eqref{eq: linerized2d} and the spectral energy equation. 
We consider the following setup: let $\Omega_c = [0,L]$ with $L= 2000 \ m $ m, a constant depth $b = 20 \ m $, and set the current to be 
\[\bar{U}_1(x_1) = -5\frac{x_1^2}{L^2},\]
with units $m/s$. 
As in the wave action simulation, we take as initial conditions for the wave simulation $\varphi_0 = 0$ and 
\[\eta_0(x_1) = \frac{1}{2}B(x_1; x_0,0.04L)\cos(k_0(x_1-x_0))  \text{ with }  k_0 = \frac{2\pi}{(L/60)}, \quad  x_0 = 3L/10.  \]
Moreover, we solve phase-space flow for the spectral action equation, 
\begin{equation}
    \dot{x} = \partial_k \omega(x,k), \quad x(0) = x_0,  \quad \dot{k} = -\partial_x \omega(x,k), \quad k(0) = k_0.  
\end{equation}
The initial conditions are set so that we trace the evolution of the peak energy density $\mathcal{E}$ propagating to the left. Along the trajectory $(x(t),k(t))$ we compute the phase averaged energy by $E(t) = \sigma(k(t),x(t))\mathcal{A}_0$. Throughout the propagation, we compute the Wigner distribution $W(\psi)$ with $\psi = \sqrt{g}\eta + i\sqrt{\Gw}\varphi $ numerically using the trapezoidal rule and a truncated version of \eqref{eq: wigner}. At each time-step, we extract the position $(x,k)$ of the maximum of $W(\psi)(x,k)$. The numerical results are shown in  \Cref{fig:phase space}. The numerical simulation shows that the waves eventually turns and starts propagating backwards, while undergoing a dramatic increase in both wavenumber and amplitude both right before and after the turning point, and that this behavior is well represented by the phase space evolution. The dramatic wave transformation indicates that the physics is well beyond the limitations of linear theory. \\
\newline 
\begin{figure}[htbp]
    \centering
    \includegraphics[width=1\linewidth]{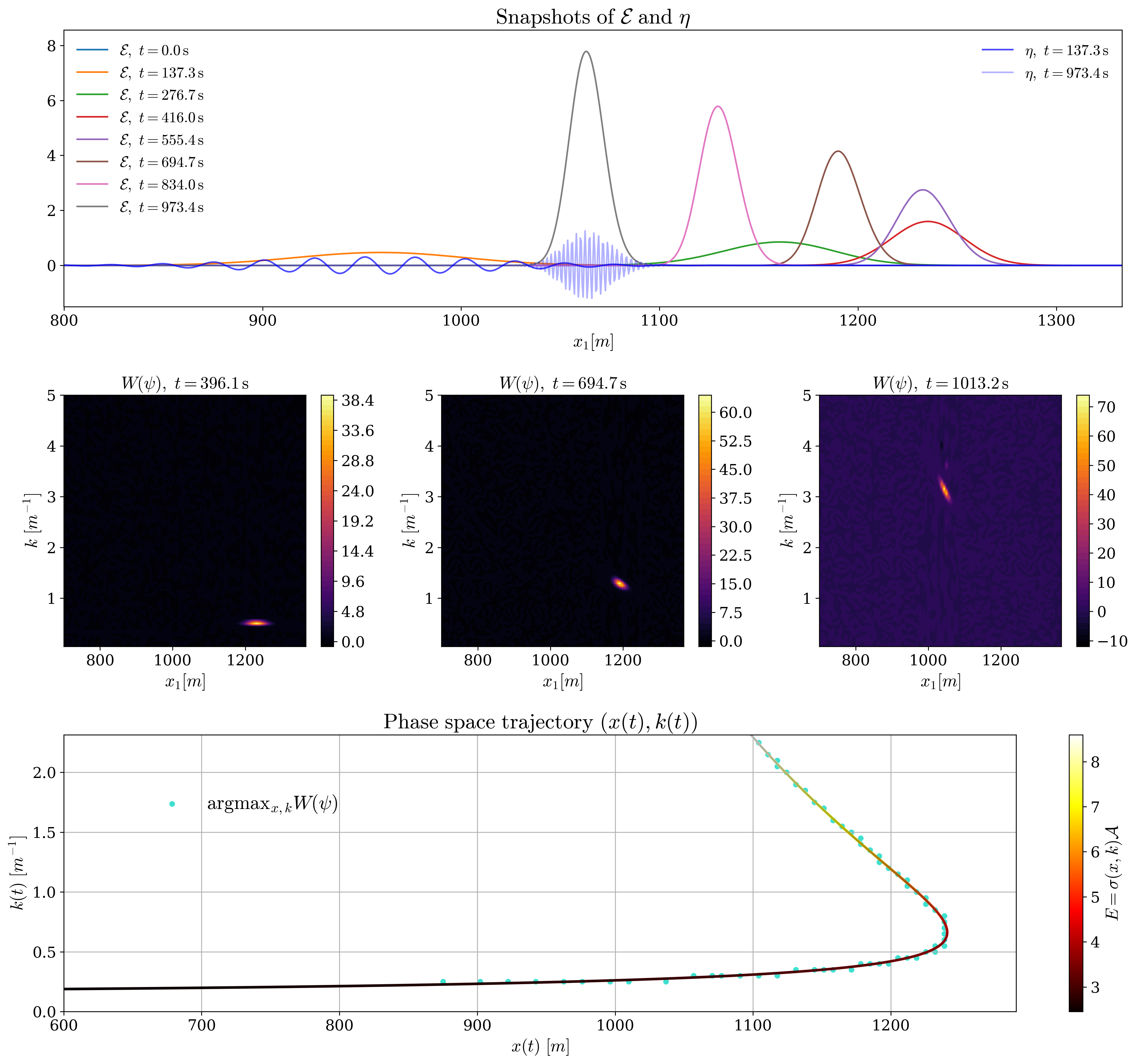}
    \caption{The top panel shows several snapshots of the evolution of $\mathcal{E}$. Initially, the wave propagates to the right, before it stops and eventually starts propagating backwards at $x_1 \approx 1240 \ m$. The energy density $\mathcal{E}$ grows rapidly around the stopping point, and continues to grow as the wave starts propagating to the left. In the lower panel, the behavior is seen in phase space; the trajectory $(x(t),k(t))$ changes direction at the stopping point, and the wavenumber keeps increasing as the wave propagates against the current. The color represents the value of $E$ along the trajectory, and matches the simulated $\mathcal{E}$ quite well. The middle panels shows the Wigner distribution at three different times; one can clearly see how it is concentrated around the phase space coordinates $(x(t),k(t))$ from the characteristics, and this becomes even more apparent in the lower panel, where we have plotted the coordinates of $\text{argmax}_{x,k} W(\psi)$.   }
    \label{fig:phase space}
\end{figure}

We give a rigorous derivation of \Cref{prop: action balance} from the asymptotic water waves system. 
\begin{proof}
The semiclassical Weyl quantization of the symbol  $i\bar{U} \cdot \xi$ is $\Opw^\mu(i\bar{U} \cdot \xi) = \bar{U} \cdot \nabla_X^\mu + \frac{1}{2}\nabla_X^\mu \cdot \bar{U}$. Therefore, we may write \eqref{eq: linerized2d} asymptotically as 
\begin{equation}
    \partial_t^\mu V + \Opw^\mu(s) V = 0,  \quad s(X,\xi,t) = \begin{bmatrix}
        i\bar{U}\cdot \xi + \frac{\mu}{2}\nabla_X \cdot \bar{U} & -g_b(X,\xi) \\
        g & i\bar{U}\cdot \xi - \frac{\mu}{2}\nabla_X \cdot \bar{U}
    \end{bmatrix}
\end{equation}
We now want to diagonalize the above system to order $\mathcal{O}(\mu)$. In the following, we will work with powers of $g_b(X,\xi)$. By the same reasoning as in \Cref{prop: G_pert}, this can be made rigorous by adding to $g_b(X,\xi)$ a bump function supported on a fixed, arbitrarily small ball around $\xi = 0$. To avoid additional notation, we assume such a modification but keep the notation $g_b$. The symbol then becomes positive and $\Opw^\mu(g_b^{\alpha})$ is elliptic and invertible  for $\alpha \geq 0$ (Thm. 29, \cite{zworski2012semiclassical}). Moreover, the composition formula for semiclassical Weyl operators is \[\Opw^\mu(a)\Opw^\mu(b) = \Opw^\mu(a\#b) \quad \text{with} \quad  a \# b = ab + \frac{\mu}{2i}\{ a,b\} + \mathcal{O}(\mu^2).\] It  follows that  $\Opw^\mu(g_b^{\alpha})\Opw^\mu(g_b^{-\alpha}) = I + \mathcal{O}(\mu^2)$ and that $\Opw^\mu(g_b^{1/2})\Opw^\mu(g_b^{1/2}) = \Gw^{\mu}(b)  + \mathcal{O}(\mu^2)$. We will use the notation $\sqrt{\Gw^\mu}(b) = \Op_{\text{\tiny{W}}}^\mu(\sqrt{g_b(X,\xi)})$

We first want to transform our equation into the energy variables $\psi_\pm = \sqrt{g}\eta \mp i \sqrt{\Gw^\mu}(b) \varphi$.
We therefore introduce the operator
\begin{equation}
     M = \Opw^\mu(m), \quad m(X,\xi) = \begin{bmatrix}
        \sqrt{g} & -i\sqrt{g_b} \\
        \sqrt{g} & i\sqrt{g_b}
    \end{bmatrix}, \quad M^{-1} = \Opw^\mu(m^{-1}), 
\end{equation}
and note that  $M^{-1}M = I + \mathcal{O}(\mu^2)$.
Defining $Q = \Opw^\mu(m) V$, we get 
\[\partial_t^\mu Q  + M\Opw^\mu(s) M^{-1} Q - (\partial_t^\mu M)M^{-1}Q = \mathcal{O}(\mu^2).\]
We now make the decomposition
\[s = i\bar{U}\cdot \xi I + \frac{\mu}{2} \nabla_X \cdot \bar{U}\begin{bmatrix}
    1 & 0 \\
    0 & -1
\end{bmatrix} + \begin{bmatrix}
    0 & -g_b \\g & 0  
\end{bmatrix} = s_1 + \mu s_0 + s_g, \text{ (respectively)}. \]
Using the composition estimates for $\sqrt{\Gw^\mu}$ and $\Gw^\mu$, one finds that 
\[M \Opw^\mu(s_g)M^{-1} = \Opw^\mu(\text{diag}(i\sqrt{gg_b},-i\sqrt{g g_b})) + \mathcal{O}(\mu^2)\]
and that 
\[\mu M \Opw^\mu(s_0)M^{-1} =  \mu\frac{1}{2}\nabla_X \cdot \bar{U} J + \mathcal{O}(\mu^2), \]
where $J$ is the $2\times 2$ anti-diagonal identity matrix.
We therefore get 
\[ M \Opw^\mu(s)M^{-1} = \Opw^\mu(D_1 + \mu R) + \mathcal{O}(\mu^2), \quad  D_1 = \begin{bmatrix}
    i(\bar{U}\cdot \xi - \sqrt{gg_b}) & 0 \\ 0 & i(\bar{U}\cdot \xi + \sqrt{gg_b})
\end{bmatrix}, \]
and \[R =  \frac{1}{2}\nabla_X \cdot \bar{U} J+ \frac{1}{2i}\left( \{m,s_1\}m^{-1} + \{ms_1,m^{-1}\} \right). \]
For $2\times 2$ matrix symbols we have $\{a,b\}_{i,j} = \sum_{k = 1}^2 \{a_{i,k},b_{k,j} \}$. 
Hence $\{m,m^{-1}\}_{i,j} = 0$ for $i,j = 1,2$. Consequently, $\{m s_1,m^{-1}\} = m\{s_1,m^{-1}\} +\{m,m^{-1}\}s_1 = m\{s_1,m^{-1}\}$. Moreover, {\small \[\{s_1,mm^{-1}\} = 0 \quad \implies \quad  \{s_1,m\}m^{-1} = - m \{s_1,m^{-1}\} \quad \implies \quad \{m,s_1\}m^{-1} = m\{s_1,m^{-1}\}.  \]  }
All this now allows for a brief calculation:
\begin{align*}
\frac{1}{2i}\left( \{m,s_1\}m^{-1} + \{ms_1,m^{-1}\} \right) &= \frac{1}{i} \{m,s_1\}m^{-1} = \frac{\{ \sqrt{g_b}, \bar{U}\cdot \xi \}}{2\sqrt{g_b}}\begin{bmatrix}
    1 & -1 \\ -1 & 1
\end{bmatrix}.
\end{align*}
Similarly, 
\begin{equation*}
(\partial_t M)M^{-1} = \Opw^{\mu}\left(\frac{\partial_t \sqrt{g_b} }{2\sqrt{g_b} }\right) \begin{bmatrix}
    1 & -1 \\ -1 & 1
\end{bmatrix} + \mathcal{O}(\mu^2).
\end{equation*}
Setting $\gamma =  \frac{1}{2\sqrt{g_b}}(\{ \sqrt{g_b}, \bar{U}\cdot \xi \} - \partial_t \sqrt{g_b}) $, we have now the system 
\[\partial_t^\mu Q + \Opw^\mu( D_1 + \mu \gamma I + \mu(\nabla_X \cdot \bar{U} - \gamma)J ) Q = \mathcal{O}(\mu^2). \]
To get rid of the $\mathcal{O}(\mu)$ off-diagonal terms, we make a second diagonalization. For the diagonalizer, we set $K = \Opw^{\mu}(I + \mu k) $ for some off-diagonal matrix symbol $k$ to be determined, and note that $K^{-1} = \Opw^{\mu}(I - \mu k) + \mathcal{O}(\mu^2).$  With $\tilde{Q} = KQ$, we get the conjugated system
\[ \partial_t \tilde{Q} + K \Opw^\mu(D_1 + \mu (\gamma I + (\nabla_X \cdot \bar{U} - \gamma)J) )K^{-1} \tilde{Q} = \mathcal{O}(\mu^2).\]
Since all non-trivial commutator terms above are $\mathcal{O}(\mu^2)$, the $\mathcal{O}(\mu)$ symbol of the above operator becomes
\[ D_1 + \mu (\gamma I + (\nabla_X \cdot \bar{U} - \gamma)J) + \mu(kD_1 - D_1k). \]
We now want to choose $k$ such that the off-diagonal terms in this symbol vanish. This is achieved by requiring that 
\[ (kD_1 - D_1k) + (\nabla_X \cdot \bar{U} - \gamma) J = 0 \implies \begin{cases}
    -2ik_{1,2}\sqrt{g g_b} = -\nabla_X \cdot \bar{U} + \gamma, \\
    2ik_{2,1}\sqrt{g g_b} = -\nabla_X \cdot \bar{U} + \gamma. 
\end{cases} 
    \]
Choosing $k$ accordingly, our $\mathcal{O}(\mu)$ diagonalized equation becomes 
\begin{equation}
    \partial_t^\mu \tilde{Q}_\pm = \Opw^\mu(d_\pm) \tilde{Q}_\pm +\mathcal{O}(\mu^2), \quad d_\pm = -i(\bar{U}\cdot \xi \mp \sqrt{gg_b}) - \mu\gamma.
\end{equation}
We now pick the forward branch $\tilde{\psi} = \tilde{Q}_-$ and consider $W^\mu(\tilde{\psi})$. Differentiating in time gives
\begin{equation*}
    \partial_t^\mu W^\mu(\tilde{\psi}) = W^\mu(\partial_t\tilde{\psi},\tilde{\psi}) + W^\mu(\tilde{\psi},\partial_t\tilde{\psi}) = W^\mu(\Opw^\mu(d_-) \tilde{\psi},\tilde{\psi}) + W^\mu(\tilde{\psi}, \Opw^\mu(d_-) \tilde{\psi}) +\mathcal{O}(\mu^2).
\end{equation*}
Using the expansions (cf. \cite{gerard1997homogenization}) 
\begin{align*}
    W^\mu(\Opw^\mu(a)u,v) &= a W^\mu(u,v) + \frac{\mu}{2i}\{a,W^\mu(u,v) \} + \mathcal{O}(\mu^2), \\
    W^\mu(u,\Opw^\mu(a)v) &=  W^\mu(u,v)\overline{a} + \frac{\mu}{2i}\{W^\mu(u,v),\overline{a}\} + \mathcal{O}(\mu^2), 
\end{align*}
we find that 
\begin{equation*}
    \partial_t^\mu W^\mu(\tilde{\psi}) = -\mu \{ (\bar{U}\cdot \xi + \sqrt{gg_b}),W^\mu(\tilde{\psi}) \} -  2\mu\gamma W^\mu(\tilde{\psi}) + \mathcal{O}(\mu)
\end{equation*}
Next, we note that $W^\mu(\tilde{\psi}) = W^\mu(\psi) + \mathcal{O}(\mu)$. Omitting the constant scaling, and using again the above expansions on $W^\mu(\sqrt{\Gw^\mu}\varphi,\sqrt{\Gw^\mu} \varphi)$ we therefore get
\begin{equation}
   E_W =  W^\mu(\tilde{\psi}) + \mathcal{O}(\mu) = gW^\mu(\eta) + W^\mu(\varphi,\Gw^\mu \varphi) + 2\sqrt{g}\text{Im}W^\mu(\eta,\sqrt{\Gw^\mu} \varphi) + \mathcal{O}(\mu).
\end{equation}
Recalling that $\sigma = \sqrt{gg_b}$ and  $\omega = \sigma + \bar{U}\cdot \bm{k}$, we note that \[ \{\omega,\sigma\} = \{\sigma,\sigma\} + \{\bar{U}\cdot \bm{k},\sigma\} = - \{\sigma,\bar{U}\cdot \bm{k}\}.\]
We therefore write $\gamma = -\frac{\partial_t \sigma + \{\omega,\sigma\} }{2\sigma}$, and get after dividing by $\mu$ the following evolution equation for $E_W$ 
\[
\partial_t E_W + \big\{ \omega, E_W \big \} =  \frac{\partial_t \sigma + \{\omega,\sigma\} }{\sigma}E_W + \mathcal{O}(\mu). 
\]
Last, the spectral action equation follows by rearrangement, 
since  \[\partial_t \frac{E_W}{\sigma} = \frac{\sigma \partial_t E_W - \partial_t \sigma E_W }{\sigma^2 } \quad\text{and}\quad  \big\{ \omega, \frac{E_W}{\sigma}\big \} = \frac{\sigma \{\omega,E_W\} - E_W\{\omega,\sigma\} }{\sigma^2}. \]
\end{proof}

\section{Conclusion}
The initial question that led us to writing this article was a desire to clearly understand the route from the full Euler equations for surface waves to the conservation law for wave action. In the pursuit of an answer, we have established a rigorous framework for linear surface gravity waves propagating over variable currents and bathymetry. Starting from the Zakharov-Craig-Sulem surface formulation justified for background currents with weak, localized vorticity, we demonstrated the well-posedness of the governing equations using the theory of hyperbolic systems of pseudo-differential operators. The central mathematical mechanism enabling our subsequent asymptotic analysis is the semiclassical Weyl quantization of the DN operator. We showed that this quantization is asymptotically accurate for slowly varying environments and, crucially, preserves the self-adjoint structure on $L^2$ required for consistent energy dynamics. 

Using this exact pseudo-differential framework, we derived a novel equation for the evolution of the total wave energy. This non-asymptotic formulation explicitly isolates the production term, $\mathcal{P}= -\bm{u}\cdot S(\bar{\bm{U}})\bm{u}$ which governs the physical energy exchange between the wave velocity and the background flow's strain tensor. We then systematically recovered various classical wave models as specific asymptotic limits of our leading-order system. By expanding the asymptotic DN operator, we derived an evolution equation for the phase-averaged energy density that directly corresponds to the classical wave action equation and its geometric ray-tracing characteristics.

Further analysis of the asymptotic DN operator allowed us to rigorously incorporate diffractive effects into the phase-averaged models. From this, we derived the mild-slope equation for time-harmonic waves and a linear Schrödinger-type equation that captures the dispersive spreading of wave packets. Finally, to analyze phase-space dynamics, we applied the Wigner transform to the diagonalized system and derived the action balance equation. These theoretical derivations were supported by numerical experiments. The simulations quantitatively verified the exact total energy evolution, demonstrated the precision of the Schrödinger model in capturing dispersive decay over variable bathymetry, and illustrated the phase-space resolution of wave turning against accelerating currents.\\
As the derived models are central to linear wave-current-bathymetry interaction, we believe our asymptotic surface variable model constitutes a truly unified approach to the topic.\\
Building on this framework, a natural, but highly non-trivial, next step is to incorporate weakly non-linear wave interactions, while trying to retain a simple and useful model. Moreover, extending the diffractive phase-space models to domains with physical boundaries using diffractive geometric optics would significantly increase the applicability of the Schrödinger model. Also, allowing for background currents with strong vorticity and considering rotational wave perturbations gets very complex also in the linear setting \cite{liu2025spectra}, and if the fluid is viscous, even initially irrotational waves may develop vorticity due to interaction with the bottom \cite{riquier2026irrotational}.

\section*{Acknowledgments}
A.K. acknowledges the financial support from internal grant Nr. 102158 at Simula Research Laboratory. T.H. acknowledges the financial support from the Norwegian Space Agency through the CoastCurr project (Nb. 74CO2501)

\section*{Appendix: Wave simulation}
To support this article, we have made available the Python/Jupyter Notebook code for numerical study of wave-current-bathymetry interaction, available at \href{https://github.com/jfkirkeby/WaCuBa}{https://github.com/jfkirkeby/WaCuBa}. The code has the following features: 
\vspace{2mm}
\begin{itemize}
    \item[$\circ$] Solves the Cauchy problem for \eqref{eq: cauchy} with prescribed initial conditions $(\eta_0,\varphi_0)$ and variable bathymetry $b(X)$ and current $\bar{U}(X)$ on a rectangular domain. Returns $\eta(t,X),\varphi(t,X) $ and $\mathcal{E}(t,X)$ and additional wave features. 
    \item[$\circ$] Uses ray tracing to compute wavenumber fields $\bm{k}(X)$ for given bathymetry $b(X)$ and current $\bar{U}(X)$.
    \item[$\circ$] Computes all intrinsic wave properties and solves the wave action equation \eqref{eq: wave action 1} and the Schrödinger equation \eqref{eq: eta-schrödinger}. 
    \item[$\circ$] Allows for flexible and easy comparison of results from different models. 
\end{itemize}
\vspace{2mm}
We solve both the wave system and the energy and Schrödinger equations using a standard Fourier pseudo-spectral method (cf. \cite{trefethen2000spectral}); we express our unknowns in a truncated Fourier basis, e.g., $\eta_N(t,X) = \sum_{|\bm{k}| \leq N} \eta_{\bm{k}}(t)e^{i\bm{k}\cdot X}$, compute spatial derivatives in the $\bm{k}$-domain, and transform back to physical space for multiplication by vector fields and time stepping. As the current $\bar{U}$ is assumed to be smooth (and there are no non-linear terms), we do not enforce de-aliasing. For the DN operator, we use the truncated Fourier-Galerkin method developed in \cite{andrade2018three}. We precompute the bathymetry dependent part of the $\mathcal{G}(b)$, and we also implement absorbing boundary conditions \cite{bodony2006analysis}. For time integration of the PDEs we use the standard Runge-Kutta 4 scheme and for wavenumber computation, we incorporate the open source ray tracing module \cite{halsne2023ocean}. The solver has been verified numerically by considering convergence as a function of grid size/Fourier modes. Although the wave simulation is the backbone in our numerical experiments, the publicly available code does not, for reasons of readability, support all functionality used to produce the results in the paper. For more details, we refer to the user guide accompanying the code. 
\section*{Appendix: Proofs}
\begin{proof}{\Cref{prop: potpert}}
We first show that $\bm{\omega} = \mathcal{O}(\varepsilon \delta)$ and that $\mathrm{supp}(\bm{\omega})$ is compact for any finite time. By assumption $\bm{u} = \mathcal{O}(\varepsilon)$ and the linearized vorticity equation is 
\begin{equation}
(\partial_t  + \bar{\bm{U}}\cdot \nabla_{X,z} )\bm{\omega} - (\bm{\omega} \cdot \nabla_{X,z}) \bar{\bm{U}} = r(\bm{u},\bar{\bm{U}},\bar{\bm{\omega}}) + \mathcal{O}(\varepsilon^2),
\label{eq: vort evo}
\end{equation}
with $  r = (\bar{\bm{\omega}}\cdot \nabla_{X,z})\bm{u} - (\bm{u}\cdot \nabla_{X,z})\bar{\bm{\omega}} = \mathcal{O}(\varepsilon \delta)$  and $ \mathrm{supp}(r) \subset \{(X,z): |X| < R\}. $ Next, let $\chi(t,(X_0,z_0))$ be the characteristic vector field of $\bar{\bm{U}}$ given by $\dot{\chi(t;(X,z))}=\bar{\bm{U}}(t,\chi(t;(X,z))), \chi(0;(X,z)) = (X,z)$. 
Writing $\bm{\omega}(t) = \bm{\omega}(t,\chi(t,(X,z)))$ etc., we have   \[\frac{\mathrm{d}}{\mathrm{d}t}\bm{\omega}(t) = (\bm{\omega}(t) \cdot \nabla_{X,z}) \bar{\bm{U}}(t) + r(t). \]
 Since $r$ is compactly supported and  $\bm{\omega}|_{t=0} = 0$ it follows that \[\mathrm{supp}(\bm{\omega}(t,\cdot)) \subset V(t) = \{ (X,z) \subset \Omega(0,b) : |X| < R + \|\bm{U}\|_{L^\infty} t \}.\]
Moreover, with $M_1 = \sup_{t \leq T}\|\nabla_{X,z} \bar{\bm{U}}(t,\cdot)\|_{L^\infty} $ and $M_2 = \sup_{t \leq T}\|r(t,\cdot)\|_{L^\infty}$ we get 
\[ \frac{\mathrm{d}}{\mathrm{d}t}|\bm{\omega}(t)| \leq M_1 |\bm{\omega}(t)| + M_2 \quad \implies \quad  |\bm{\omega}(t)| \leq \frac{M_2}{M_1}\left( e^{M_1t} -1\right). \]
By assumption $T = \mathcal{O}(1)$ and so $\sup_{t\leq T} \|\bm{\omega}(t,\cdot)\|_{L^\infty} \leq \frac{M_2}{M_1}\left( e^{M_1t} -1\right) = \mathcal{O}( \varepsilon \delta) $.
Next, we differentiate \eqref{eq: vort evo}, and find $D^\alpha \bm{\omega}$ ($|\alpha| = 1)$ satisfies \[(\partial_t  + \bar{\bm{U}}\cdot \nabla_{X,z} )D^\alpha\bm{\omega} + (D^\alpha\bar{\bm{U}}\cdot \nabla_{X,z} )\bm{\omega} -  (D^\alpha\bm{\omega} \cdot \nabla_{X,z} \bar{\bm{U}}) =  r^1(\bm{\omega},\bm{u},\bar{\bm{U}},\bar{\bm{\omega}}). \]
Since $D^\alpha \bm{u} = \mathcal{O}(\varepsilon)$ and $\bm{\omega} = \mathcal{O}(\varepsilon \delta)$, $r^1$ contains only bounded terms, and one can check that $  r^1 = \mathcal{O}(\varepsilon \delta)$ and $ \mathrm{supp}(r^1) \subset \{(X,z): |X| < R\}.$ Since $|(D^\alpha\bar{\bm{U}}\cdot \nabla_{X,z} )\bm{\omega}| \leq C\delta \max_{|\alpha| = 1}|D^\alpha \bm{\omega}|$,  taking the maximum over indices $\alpha$ and applying the same argument as above therefore shows that $D^\alpha \bm{\omega} = \mathcal{O}(\varepsilon \delta)$, and repeating one more time gives $D^\alpha \omega = \mathcal{O}(\varepsilon \delta)$ for $|\alpha| = 2$.

We now consider the so-called div-curl problem to reconstruct $\bm{u}$ from $\omega$ in $\Omega(0,b)$, following the works \cite{melinand2017coriolis, castro2015well}. We first decompose the horizontal component of $\bm{u}$ at the surface. Writing $\bm{u}_h = (u_1,u_2)_0$, then by the Helmholtz-Hodge decomposition there exists a unique decomposition 
\[ \bm{u}_h = \nabla_X \varphi + \bm{v}, \quad \nabla_X \cdot \bm{v} = 0,\]
where $\varphi$ and $\bm{v}$ are given by 
\[ \varphi = \Delta_{X}^{-1}( \nabla_X \cdot \bm{u}_h ) \quad \text{and} \quad   \bm{v} = \nabla_X^\perp\Delta_{X}^{-1} (\nabla_X^\perp \cdot \bm{u}_h).    \]
Above, $\Delta_{X}^{-1}$ denotes the fundamental solution to the Laplace operator, and $\nabla_X^\perp = (-\partial_{x_2},\partial_{x_1})^\top$. Note that $\nabla_X^\perp  \cdot \bm{u}_h = (\omega_3)_0$.
We now introduce a harmonic extension $\phi$ of $\varphi$ by 
\[ \Delta_{X,z} \phi = 0 \quad \text{in} \quad \Omega(0,b), \quad \partial_\nu \phi|_{z= -b} = 0, \quad  \phi|_{z=0} = \varphi,\]
and set $\tilde{\bm{u}} = \bm{u} - \nabla_{X,z} \phi$. Hence $\tilde{\bm{u}}$ is the rotational component of $\bm{u}$.  At the surface we have $\nabla_X \cdot \tilde{\bm{u}}_h = \nabla_X \cdot ( \bm{u}_h - (\nabla_{X} \phi)_0 )= 0$, and $\tilde{\bm{u}}_h = \bm{v}$. In terms of $\bm{\omega}$, we have
\[ \tilde{\bm{u}}_h= \nabla_X^\perp\Delta_{X}^{-1} (\omega_3)_0  = \int_{\R^2} \frac{(X-Y)^\perp}{|X-Y|^2} \omega_3(Y,0) \mathrm{d}Y, \quad \text{with} \quad  (x_1,x_2)^\perp =(-x_2,x_1).\]
Since $\bm{\omega} $ is supported in $V(t)$ we obtain the bound $\|\tilde{\bm{u}}_h\|_{L^\infty(\R^2)} \leq c(V(t)) \|\omega\|_{L^\infty}$ and 
$|\tilde{\bm{u}}_h| = \mathcal{O}(|X|^{-1}) $ as $|X| \to \infty$. 
With the tangential boundary, we get the following div-curl system for $\tilde{\bm{u}}$:
\begin{equation}
    \begin{dcases}
        \nabla_{X,z} \cdot \tilde{\bm{u}} = 0 & \quad \text{in} \quad \Omega(0,b),  \\
        \nabla_{X,z} \times \tilde{\bm{u}} = \bm{\omega} & \quad \text{in} \quad \Omega(0,b),\\
        \tilde{\bm{u}}_h= \nabla_X^\perp\Delta_{X}^{-1} (\omega_3)_0 &\quad \text{at} \quad z = 0, \\
        \tilde{\bm{u}} \cdot \nu = 0 &\quad \text{at} \quad z = - b.
    \end{dcases}
\end{equation}
Since $\bm{\omega}$ is in $H^1$, the existence and uniqueness (in $H^1$) of a solution $\tilde{\bm{u}}$ to the above equation follows from Theorem 2.8 in \cite{melinand2017coriolis}. 

We now establish a pointwise bound on $\tilde{\bm{u}}$ by decomposing the velocity field into a whole-space potential and a harmonic correction. Let $\bm{u}_\omega$ be given by 
\[ \bm{u}_\omega(X,z) = \frac{1}{4\pi} \int_{\R^3} \frac{\bm{\omega}((Y,z')) \times ((X,z)-(Y,z'))}{|(X,z)-(Y,z')|^3} \mathrm{d}Y\mathrm{d}z',  \]
where $\bm{\omega}$ is smoothly extended to zero on $\R^3 \setminus \overline{\Omega(0,b)}$.
Since $\bm{\omega} \in L^\infty(\R^3)$ and $\mathrm{supp}(\bm{\omega}) \subset V(t)$ we get immediately that  $\|\bm{u}_\omega\|_{L^\infty(\Omega(0,b))} \leq C(V) \|\omega\|_{L^\infty(\Omega)}$  and that  $ |\bm{u}_\omega| = O(|X|^{-2})$ as $|X| \to \infty$. Differentiating the convolution operator and moving the derivative onto $\bm{\omega}$ shows that the same bounds hold for $D^\alpha \bm{u}_\omega$ with $|\alpha| \leq 2$. 
We define the correction $\bm{u}^* = \tilde{\bm{u}} - \bm{u}_\omega$. Since $\nabla_{X,z} \times \bm{u}^* = 0$ and $\nabla_{X,z} \cdot \bm{u}^* = 0$ and $\Omega(0,b)$ is simply connected, there exists a potential $\Phi^*$ such that $\bm{u}^* = \nabla_{X,z}\Phi^*$. The boundary conditions for $\bm{u}^*$ are determined by the mismatch between the boundary data:
\begin{equation}
    \begin{cases}
    \bm{u}_h^* = \tilde{\bm{u}}_h - (\bm{u}_\omega)_h & \text{at }  z = 0, \\
    \bm{u}^* \cdot \nu = (\tilde{\bm{u}} - \bm{u}_\omega) \cdot \nu = - \bm{u}_\omega \cdot \nu &  \text{at }  z = -b.
\end{cases}
\end{equation}
Since \[\nabla_X \Phi^*_0 =\tilde{\bm{u}}_h - (\bm{u}_\omega)_h \implies \Delta_X^2 \Phi^*_0 = \nabla_X \cdot (\tilde{\bm{u}}_h - (\bm{u}_\omega)_h) = - \nabla_X \cdot (\bm{u}_\omega)_h, \]
we set $\varphi^* = -\Delta^{-1}(\nabla_X \cdot (\bm{u}_\omega)_h)$ and $\psi^* = - (\bm{u}_\omega \cdot \nu)|_{z=-b} $, and  consider the boundary value problem 
\[ \Delta_{X,z} \Phi^* = 0 \quad \text{in } \Omega(0,b), \quad \Phi^*|_{z=0} = \varphi^*, \quad \partial_\nu \Phi^*|_{z=-b} = \psi^*.  \]
Since both $\varphi^*, \psi^* \in H^1$, it follows by an application of the Lax-Milgram theorem that there exists a unique solution $\Phi^* \in H^1$ to the above problem (cf. Ch. 2, \cite{waterwavesprob}).  We now obtain pointwise estimates on $\Phi^*$ from the boundary data using boundary integral equations. 
We now seek a Green's function $G$ satisfying \begin{equation}
    \begin{cases} \Delta_{X,z} G(X,z,Y',z') = -\delta((X,z) - (Y',z')),  \quad \forall (X,z),(Y',z') \in \mathbb{R}^2 \times (-b_{\text{\tiny{max}}},0), \\
 G(X,z,Y',z')|_{z=0} = 0, \quad \partial_z G(X,z,Y',z')|_{z=-b_{\text{\tiny{max}}}} = 0,
\end{cases}
\label{eq: greens}
\end{equation}
with decay $ G = \mathcal{O}(1/|X - Y|)$ as $ |X|,|Y| \to \infty.$ We know that $G = \frac{1}{4\pi|(X,z)-(Y,z')|} + H$, where $H$ is some harmonic function enforcing the boundary conditions. However, due to the simple geometry, we can use separation of variables and obtain 
\[G(X,z,Y',z') = \frac{1}{\pi b_{\text{\tiny{max}}}}\sum_{n=0}^\infty\cos(k_n(z+b_{\text{\tiny{max}}}))\cos(k_n(z'+b_{\text{\tiny{max}}}))K_0(k_n|X -Y|), \]
with $k_n = (n+1/2)\pi/b_{\text{\tiny{max}}}$. Here $K_0$ is the modified Bessel function of the second kind, and it satisfies $K_0(r) \leq C e^{-r} $ as $r \to \infty$.
Hence, the infinite strip Green's function actually decays like $G = \mathcal{O}(e^{-k_0|X - Y|})$ as $|Y| \to \infty$. 

We define the total bottom and variable bottom as $\Gamma_b = \{(X,-b(X)): X \in \R^2\}$ and $\tilde{\Gamma}_b = \{ (X,-b(X)): X \in \mathrm{supp}(b - b_{\text{\tiny{max}}})\}$, respectively. By Green's identity,  
\begin{align*}
    \Phi^*(X,z) &= \int_{\R^2 \cup \Gamma_b} G(X,z,Y',z') \partial_{\nu(Y')} \Phi(Y',z')  - \partial_{\nu(Y')} G(X,z,Y',z') \Phi(Y',z') \mathrm{d}S(Y') \\
    & = -\int_{\R^2} \partial_{z'} G(X,z,Y',0) \varphi^* \mathrm{d}X + \int_{\Gamma_b} G(X,z,Y',-b(Y')) \psi^*(Y') \mathrm{d}S(Y') \\
    &  -\int_{\tilde{\Gamma}_b} \partial_{\nu(Y')} G(X,z,Y',-b(Y')) \Phi^*(Y',-b(Y')) \mathrm{d}S(Y')
    \label{eq: greens}
\end{align*}
We now introduce, respectively, the single- and double-layer potentials for harmonic functions (Our use of layer potentials follows Ch. 6, \cite{kress1989linear}, and these potentials will be reused in later proofs.).
\begin{align}
    S\rho(X,z) &= \int_{\Gamma_b} G(X,z,Y',-b(Y')) \rho(Y') \mathrm{d}S(Y'), \\
    D \rho(X,z) & =\int_{\tilde{\Gamma}_b}  \partial_{\nu(Y')}G(X,z,Y',-b(Y'))  \rho(Y') \mathrm{d}S(Y'), \\
    D_0 \rho(X,z) & =\int_{\R^2}  \partial_{z'} G(X,z,Y',0)  \rho(Y') \mathrm{d}Y'.
    \label{eq: potential}
\end{align}
In terms of the above operators we may therefore write 
\begin{equation}
    \Phi^*(X,z) = - D_0 \varphi^*(X,z) + S \psi^*(X,z) - D \rho(X,z), \quad \rho = \Phi^*(X,z)|_{z = -b}.
    \label{eq: phi pot}
\end{equation}
Due to the decay of $G$ we find that 
\[ \|D_0 \varphi^*(X,z)\|_{L^\infty} \leq C\|\varphi^*\|_{L^\infty}, \quad  (X,z) \in \R^2 \times [-b(X),0),  \]
and for $S$, the singularity of $G$ is integrable on $\Gamma_b$, and therefore 
\[ \|S \psi^*(X,z)\|_{L^\infty} \leq C \|\psi^*\|_{L^\infty}, \quad (X,z) \in \R^2 \times [-b(X),0]. \]
Using now the jump relation for $D$ (Theorem 6.18, Ch. 6, \cite{kress1989linear}), we find that $\rho$ satisfies the Fredholm equation 
\begin{equation}
    \rho(X) - 2 D\rho (X) = - 2g(X), \quad X \in \tilde{\Gamma}_b,  
    \label{eq: fredholm}
\end{equation}
with $ g(X) = - D_0 \varphi^*(X,-b(X)) + S \psi^*(X,-b(X))$. Since $\tilde{\Gamma}_b$ is compact, $D: C(\tilde{\Gamma}_b) \to C(\tilde{\Gamma}_b)$ is compact (Ch. 2, \cite{kress1989linear}). As the nullspace of \eqref{eq: fredholm} is trivial  (Theorem 6.21, \cite{kress1989linear}), there is a unique $\rho$ satisfying  $\|\rho\|_{L^\infty} \leq C \|g\|_{L^\infty} \leq C(\|\varphi^*\|_{L^\infty} + \|\psi^*\|_{L^\infty})$. Consequently, equation \eqref{eq: phi pot} and the above estimates show that \[\|\Phi^*\|_{L^\infty} \leq C(\|\varphi^*\|_{L^\infty} + \|\psi^*\|_{L^\infty}).\] 
The bound on $\bm{u}^* = \nabla_{X,z} \Phi^*$ now follows from Schauder estimates. Let $\Omega_R \subset \overline{\Omega(0,b)}$ be a smooth, bounded domain, possibly intersecting the boundary, and define $\Gamma_B$ and $\Gamma_0$ to be the parts of $\partial \Omega_R$ intersecting with the bottom and $z= 0$, respectively.
Then for $0< \gamma < 1 $ $\Phi^*$ satisfies (Theorem 6.6, \cite{gilbarg1977elliptic}) 
\[\|\Phi^*\|_{C^{2,\gamma}(\Omega_R)} = C(\Omega_R)\left(\|\Phi^*\|_{L^\infty(\Omega_R)} + \|\varphi^*\|_{C^{2,\gamma}(\Gamma_0)} + \|\psi^*\|_{C^{1,\gamma}(\Gamma_B)} \right).  \   \]
The constant $C(\Omega_R)$ depends on $b$ but since $b$ is smooth and constant outside some compact set, $C(\Omega_R)$ is bounded. Since $\varphi^*$ satisfies $ \Delta_X \varphi^* = - \nabla_X \cdot  (\bm{u}_{\omega})_h $ it follows that $\|\varphi^*\|_{C^{3}(\Gamma_0)} \leq C \|(\bm{u}_{\omega})_h\|_{C^{2}(\Gamma_0)} $, and similarly $\|\psi^*\|_{C^{2}(\Gamma_B)} \leq C \|(\bm{u}_{\omega})_h\|_{C^{1}(\Gamma_0)}$. As have already shown that $ \|D^\alpha \bm{u}_\omega \|_{L^\infty} = \mathcal{O}(\varepsilon \delta)$ it follows that 
\[\|\varphi^*\|_{C^{2,\gamma}(\Gamma_0)} \leq \|\varphi^*\|_{C^{3}(\Gamma_0)} = \mathcal{O}(\varepsilon \delta), \quad  \|\psi^*\|_{C^{1,\alpha}(\Gamma_B)} \leq \|\psi^*\|_{C^{2}(\Gamma_B)} = \mathcal{O}(\varepsilon \delta).\]
Consequently, we invoke the global upper bound on $\|\Phi^*\|_{L^\infty} $ and conclude that 
\[
\|\Phi^*\|_{C^{2}} \leq C\left( \|\varphi^*\|_{L^\infty} + \|\psi^*\|_{L^\infty} +\|\varphi^*\|_{C^{3}} + \|\psi^*\|_{C^{2}} \right) = \mathcal{O}(\varepsilon \delta).
\]
Finally, we have \[|\tilde{\bm{u}} | \leq |\bm{u}^*| + |\bm{u}_\omega| = \mathcal{O}(\varepsilon \delta), \quad |D\tilde{\bm{u}} | \leq |D\bm{u}^*| + |D\bm{u}_\omega| = \mathcal{O}(\varepsilon \delta).  \]
For $\partial_t \tilde{\bm{u}}$, we note that $\partial_t \bm{\omega} = \mathcal{O}(\varepsilon \delta)$ (from \eqref{eq: vort evo}), and therefore $ \partial_t \bm{u}_\omega = \mathcal{O}(\varepsilon \delta)$. Carrying out the exact same analysis with $\partial_t \Phi^*$ then yields $\partial_t \tilde{\bm{u}} = \mathcal{O}(\varepsilon \delta)$.    
\end{proof}

\begin{proof}(\Cref{prop: G_pert})
Since $\tanh(|t|) \leq 1$, we have that  
\begin{equation}
    C(1 + \tau) \leq \gamma(\tau) \leq (1+\tau), \quad C = \min_{\tau \geq 0} \frac{\gamma(\tau)}{1+ \tau} > 0.
    \label{eq: bounds}
\end{equation} 
Moreover, $\tanh(z)$ is holomorphic on the strip $\{z \in \mathbb{C}: |\text{Im} z| \leq \pi/3 \}$, and so it follows that for any 
$\tau \in \mathbb{R}$, we there is a constant $M$ such that 
\[ \left|\frac{\mathrm{d}^k}{\mathrm{d}\tau^k} \tau\tanh(b_{\text{\tiny{max}}}\tau) \right| = Mk!.\]
Hence, for $k \in \mathbb{N}_0 $, $|\partial_\tau^k \gamma(\tau)| \leq c_k$ for some constant $c_k$. For $p\in \mathbb{R}$, it now follows by recursion that  \[\left|\partial_\tau^k(\gamma^p(\tau) \right| \leq (\gamma^{p-k}(\tau)C_k \leq \Tilde{C}_k (1+\tau)^{p-k}. \]
The last inequality holds since if $p-k < 0$ we substitute the lower bound in \eqref{eq: bounds}, while if $p-k \geq 0$ we may use the upper bound. 
Since $\mathcal{G}^p= \text{Op}(\gamma^p)$ is a PDO of order $p$, the Sobolev mapping properties is then standard (cf. \cite{alinhac2007pseudo}). The fact that $\mathcal{G}^p$ is invertible with inverse $\mathcal{G}^{-p}$ is also readily established. Since $\gamma^p(|\xi|) \geq c (1+|\xi|)^p$ for all $\xi$, it is clear that $\mathcal{G}^p$ is one-to-one. To see that $\mathcal{G}^p$ is onto, let $f \in H^{s-p}$. Assume there is some $h \in \mathcal{S}'$ such that $g^p(|\xi|) \hat{h}(\xi) = \hat{f}(\xi)$. Taking $\hat{h}(\xi) = g^{-p}(|\xi|)\hat{f}(\xi)$, we find \[\|h\|_{H^s}^2 = \int_{\mathbb{R}^2}(1+|\xi|)^{2s}|\gamma^{-p}(|\xi|)\hat{f}(\xi)|^2 \mathrm{d}\xi \leq C\int_{\mathbb{R}^2}(1+|\xi|)^{2(s-p)}|\hat{f}(\xi)|^2 \mathrm{d}\xi \leq C \|f\|_{H^{s-p}}^2,\]
and by construction $\mathcal{G}^ph = f$. Last, \[\mathcal{G}^\alpha \mathcal{G}^pf = \mathcal{F}^{-1}\left(\gamma^\alpha(|\xi|)\mathcal{F}(\mathcal{G}^pf)(\xi)\right)  = \mathcal{F}^{-1}\left(\gamma^{\alpha+ p}(|\xi|)\hat{f}(\xi)\right) = \mathcal{G}^{\alpha +p} f. \]
\end{proof}

\begin{proof}( \Cref{prop: G + K})
The solution to \eqref{eq: DN-def} with constant depth $b_{\text{\tiny{max}}}$ is given by
 \[ \Phi_{b_{\text{\tiny{max}}}}(X,z) = \int_{\mathbb{R}^2} e^{iX\cdot \xi} \frac{\cosh(|\xi|(b_{\text{\tiny{max}}} + z))}{\cosh(b_{\text{\tiny{max}}}|\xi|)} \widehat{\varphi}(\xi)\mathrm{d}\xi.\]
Hence the the difference $\phi  = \Phi - \Phi_{b_{\text{\tiny{max}}}}$ satisfies 
\begin{equation}
    \Delta \phi = 0 \quad \text{in } \R^2 \times (-b(X),0), \quad \phi|_{z= 0} = 0, \quad \partial_\nu \phi|_{z = - b(X)} = g(X),
    \label{eq: harmonic diff}
\end{equation} 
where\footnote{We disregard the normalization in $\partial_\nu$ as it is only a scaling by a smooth, positive function.} $g(X) = \partial_z \Phi_{b_{\text{\tiny{max}}}}|_{z = -b(X)} + \nabla_X b(X) \cdot \nabla_X \Phi_{b_{\text{\tiny{max}}}}|_{z = -b(X)}$ . Clearly, $\text{supp }(g) = \tilde{\Gamma}_b$. Defining 
{\small
\[ m_z(X,\xi) = \frac{|\xi|\sinh(|\xi|(b_{\text{\tiny{max}}} - b(X)))}{\cosh(|\xi|b_{\text{\tiny{max}}}) } \quad \text{and} \quad  m_X(X,\xi) = i\xi \cdot \nabla_X b(X)  \frac{\cosh(|\xi|(b_{\text{\tiny{max}}} - b(X)))}{\cosh(|\xi|b_{\text{\tiny{max}}})}, \]}
we write $ g(X) = \text{Op}(m_z)\varphi(X) + \text{Op}(m_X)\varphi(X)$. We have that \[ |m_z(X,\xi)| \leq C_1 |\xi|e^{-|\xi|(b_{\text{\tiny{max}}} - b(X))} \quad \text{and} \quad |m_(X,\xi)| \leq C_2 |\nabla_X b(X) \cdot \xi| e^{-|\xi|(b_{\text{\tiny{max}}} - b(X))}. \]
Hence $\text{Op}(m_z)\varphi(X) + \text{Op}(m_X)\varphi(X)$ is smooth whenever $b(X) < b_{\text{\tiny{max}}}$. Next, since $\nabla_X b = 0$ at $b(X) = b_{\text{\tiny{max}}}$, we have   $ \text{Op}(m_X)\varphi(X) = \text{Op}(m_z)\varphi(X) = 0 $ when $b(X) = b_{\text{\tiny{max}}}$, and since both $m_z$ and $m_X$ depend continuously on $b(X)$, we conclude that $g(X)$ is continuous. 
We now construct a solution $\phi$. Let again $G$ be the Green's function satisfying \eqref{eq: greens}, and let the double layer potential $D$ be as before.  Moreover, we restrict the domain the single layer potential $S$ to $\tilde{\Gamma}_b$
\begin{equation}
    S\rho(X,z) = \int_{\tilde{\Gamma}_b} G(X,z,X',-b(X')) \rho(X') \mathrm{d}S(X'), \\
    \label{eq: potential}
\end{equation}
Using $S$, a unique solution $\phi$ to \eqref{eq: harmonic diff} such that $ \phi = o(1)$ as $|X| \to \infty$ is given by
\[ \phi(X,z) = S \rho (X,z)  \quad \text{in } \Omega, \]
where $\rho$ is the unique solution to the Fredholm equation 
\begin{equation}
    \rho(X) - 2 D\rho (X) = - 2g(X) , \quad X \in \tilde{\Gamma}_b. 
\end{equation}
In addition, we have $\|\rho\|_{L^\infty} \leq C \|g\|_{L^\infty} \leq C \|b\|_{C^1}\|\varphi\|_{L^2}$. For a proof of these results, cf. Theorems 6.28-6.30 in \cite{kress1989linear}. We now want to estimate $\partial_z \phi|_{z = 0}$. 
As $G$ is the sum of a harmonic function an the Newtonian potential, we have that $K(X,X',z,z') = \partial_z G \sim |(X-X',z-z')|^{-2}$ and $D^\alpha_X K \sim |(X-X',z-z')|^{-{2+|\alpha|}} $, which is $C^\infty$ for $z \neq z'$. Consequently, 
\[ D^\alpha_X \partial_z \phi|_{z=0}(X) = \int_{\tilde{\Gamma}_b} D^\alpha_X K(X,0,X',-b(X')) \rho(X') \mathrm{d}S(X').\] 
It now follows Young's inequality for integral operators (cf. Theorem 0.3.1 in  \cite{sogge2017fourier}) that 

\[ \|D^\alpha_X \partial_z \phi|_{z=0}\|_{L^2} \leq C \|\rho\|_{L^2} \leq C  \|b\|_{C^1}\|\varphi\|_{L^2},\quad \text{for any multi-index} \quad  |\alpha| \geq 0. \]
Hence $\partial_z \phi|_{z=0} \in H^k$ for any $k \in \N_0$ and hence for any $s \in \R$. Setting $\mathcal{K}(b)\varphi = \partial_z \phi|_{z=0}$
we therefore conclude that $\mathcal{K}(b) \in OPS^{-\infty}$. 
\end{proof}

\begin{lemma}
    For $m \in \R $ let  $a \in S^m$ and $\operatorname{Op}(a)$ be a PDO (Weyl or Kohn-Nirenberg quantization). Assume $k \in \N_0$ such that $k -m > 0$ and that $u \in C^k_b(\R^n)$. Then there is some constant $C > 0$ such that 
    \[ \|\operatorname{Op}(a) u\|_{L^\infty} \leq C \|u\|_{C^k}.\]
    \label{prop: pointwise}
\end{lemma}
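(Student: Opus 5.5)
The plan is to write $\operatorname{Op}(a)u$ through its Schwartz kernel and split the symbol dyadically in frequency. This gives an $L^\infty$ bound on a low-frequency part plus a high-frequency tail whose order has been pushed below $-n$.

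\textbf{Step 1 (reduction to Kohn--Nirenberg).} Since $a\in S^m$, the Weyl quantization of $a$ equals the Kohn--Nirenberg quantization of a symbol in the same class $S^m$. The two are related by $e^{-\frac{i}{2}(\nabla_X,\nabla_\xi)}$, which maps $S^m$ to itself. It therefore suffices to prove the estimate for $\operatorname{Op}(a)$ in standard quantization.

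\textbf{Step 2 (integration by parts).} Fix $k$ with $k>m$. Take a cutoff $\chi\in C_0^\infty$ with $\chi=1$ near $\xi=0$, and write $a=\chi a+(1-\chi)a$.

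The low-frequency part $\chi a$ lies in $S^{-\infty}$. Its kernel $K_0(X,X-Y)$ is Schwartz in $X-Y$, uniformly in $X$, so
\[
|\operatorname{Op}(\chi a)u(X)|\le \|u\|_{L^\infty}\sup_X\|K_0(X,\cdot)\|_{L^1}.
\]

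For the high-frequency part, write formally $(1-\chi)a=\sum_{|\alpha|=k} b_\alpha(X,\xi)\,\xi^\alpha$, with
\[
b_\alpha=(1-\chi)\,a\,\frac{\xi^\alpha}{|\xi|^{2k}}\cdot c_\alpha \in S^{m-k}.
\]
Here the $c_\alpha$ are chosen so that $\sum_{|\alpha|=k} c_\alpha\,\xi^{2\alpha}=|\xi|^{2k}$, which holds with positive multinomial coefficients. Then
\[
\operatorname{Op}((1-\chi)a)u=\sum_{|\alpha|=k}\operatorname{Op}(b_\alpha)(D^\alpha u),
\]
and $\|D^\alpha u\|_{L^\infty}\le\|u\|_{C^k}$.

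\textbf{Step 3 (kernel bound for negative order).} It remains to show that a symbol $b\in S^{-\varepsilon}$, with $\varepsilon=k-m>0$, gives an operator bounded on $L^\infty$. The Kohn--Nirenberg kernel is
\[
K(X,Z)=(2\pi)^{-n}\int e^{iZ\cdot\xi}\,b(X,\xi)\,\mathrm{d}\xi .
\]
Decompose $b$ dyadically, $b=\sum_j b\,\psi(2^{-j}\xi)$. Integrating by parts $N$ times in $\xi$ gives, for each piece, the kernel bound
\[
|K_j(X,Z)|\le C_N\,2^{j(n-\varepsilon)}\,(1+2^j|Z|)^{-N}.
\]
Hence $\|K_j(X,\cdot)\|_{L^1}\le C\,2^{-j\varepsilon}$, and the sum over $j$ is uniformly bounded in $X$. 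Young's (Schur's) inequality then gives $L^\infty$ boundedness.

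\textbf{Main obstacle.} The difficulty is that $u\in C^k_b$ has no decay, so $\operatorname{Op}(a)u$ must be understood through the kernel, or as an oscillatory integral. The key point is the $L^1$ summability of the kernel, which is exactly where the strict inequality $k-m>0$ is needed. I would handle this by the dyadic Littlewood--Paley argument above. The uniformity in $X$ comes from the symbol estimates holding uniformly in $X$.
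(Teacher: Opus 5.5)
Your argument is correct, but it takes a different route from the paper.

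The paper never works with kernels. It quotes the H\"older--Zygmund mapping theorem $\|\Opkn(a)u\|_{C^s_*}\le C\|u\|_{C^{s+m}_*}$ and chooses a non-integer $s$ with $0<s\le k-m$. It then closes the chain with the embeddings $C^s_*\hookrightarrow L^\infty$ (this is where $s>0$, i.e.\ $k-m>0$, is used) and $C^k_b\hookrightarrow C^{s+m}_*$. The Weyl case is reduced to Kohn--Nirenberg as in your Step~1, via $\Opw(a)=\Opkn(a)+\Opkn(r)$ with $r\in S^{m-1}$.

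You instead strip off exactly $k$ derivatives. You use the multinomial identity $\sum_{|\alpha|=k}\frac{k!}{\alpha!}\xi^{2\alpha}=|\xi|^{2k}$ together with the exact relation $\Opkn(b\,\xi^\alpha)=\Opkn(b)\,(-i\partial_X)^\alpha$. You then prove directly, by dyadic decomposition, that a symbol of order $-\varepsilon<0$ has a kernel in $L^1$ uniformly in $X$. This is the Littlewood--Paley mechanism behind the Zygmund-space theorem, specialised to the target $L^\infty$.

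The paper's version is shorter given the black box, and it gives H\"older regularity of $\operatorname{Op}(a)u$ for free. Yours is self-contained and only involves $\|u\|_{L^\infty}$ and the top-order derivatives of $u$. It also shows why the inequality $k>m$ must be strict: at $\varepsilon=0$ the series $\sum_j 2^{-j\varepsilon}$ diverges. This is consistent with order-zero operators such as Riesz transforms failing to be bounded on $L^\infty$.

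The only detail left to write out is that, for $f\in L^\infty$, the $\mathcal{S}'$-action of $\Opkn(b)$ coincides with $\int K(X,X-Y)f(Y)\,\mathrm{d}Y$. This follows by truncating $f$ and passing to the limit. In the paper the corresponding point is absorbed into the cited mapping theorem.
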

\begin{proof}
The essential results used in the proof can be found in a suitable presentation in \cite{de2024full}, see Theorem 2.11 and the preceding discussion. For $s \in R$, let $C^s_*$ denote the Hölder-Zygmund space. For any $m \in \R$ and $a \in S^m$, it holds that 

\[ \|\operatorname{Op}_{\text{\tiny{KN}}}(a)\|_{C^{s}_*} \leq C  \|u\|_{C^{s+m}_*}.\]
For $k \in \N_0$ and $\alpha \in (0,1]$, let $C^{k,\alpha}$ be the standard Hölder space of $k$ times differentiable Hölder continuous functions. For $s = k + \alpha \neq \N_0$, the Hölder-Zygmund space and Hölder space agree, i.e., $C^s = C^{k,\alpha}$. For $u \in C^0_b$ we therefore have $\|u\|_{L^\infty} \leq C\|u\|_{C^s_*}$ for $s > 0$. Moreover, $C^{l+1}_b$ is continuously embedded in $C^{l,\beta}$ for $\beta \in (0,1]$. Now take $k \in \N_0$ and  $s \notin \N_0$ such that  $ k-m \geq s > 0$. Then 
\[\| \operatorname{Op}_{\text{\tiny{KN}}}(a) u\|_{L^\infty} \leq C \|\operatorname{Op}_{\text{\tiny{KN}}}(a) u \|_{C^s_*} \leq  C \|u \|_{C^{s+m}_*} \leq C\|u \|_{C^{k}_b} \quad \text{for } u \in C^k_b. \]
The results also holds for $\operatorname{Op}_{\text{\tiny{W}}}(a)$, since $\operatorname{Op}_{\text{\tiny{W}}}(a) = \operatorname{Op}_{\text{\tiny{KN}}}(a) + \operatorname{Op}_{\text{\tiny{KN}}}(r)$ with $r \in S^{m-1}$ (Ch. 4 in \cite{zworski2012semiclassical}).   
\end{proof}

\begin{proof}{(\Cref{prop: G est})}
We first show the approximation result for $\Gkn$ and then show that this implies that it also holds for $\Gw$. We start by defining the approximate solution to \eqref{eq: DN-def} by
\[ \Phi_a^\mu(X,z) = \frac{1}{(2\pi \mu)^2}\int_{\mathbb{R}^2} e^{iX\cdot \xi/\mu} \frac{\cosh(|\xi|(z + b(X))}{\cosh(|\xi|b(X))} \widehat{\varphi}_\mu(\xi)\mathrm{d}\xi,\]
where $\widehat{\varphi}_\mu(\xi) = \int_{\R^2}e^{-i\cdot \xi/\mu} \varphi(Y)\mathrm{d}Y$ Note that $\Gkn^\mu(b) \varphi  = \partial_z \Phi_a^\mu |_{z=0}.$
We find that 
\[ \Delta_{X,z}^\mu \Phi_a(X,z) = \frac{1}{(2\pi \mu)^2}\int_{\mathbb{R}^2} e^{iX\cdot \xi/\mu} R(X,z,\xi)\widehat{\varphi}_\mu(\xi)\mathrm{d}\xi, \]
where 
\[ R(X,z,\xi) = |\xi|\sinh(|\xi|z)\text{sech}^2(b|\xi|)\left( \mu^2\Delta_X b -2\mu|\xi| |\nabla_X b|^2\tanh(b|\xi|)  + \mu2i \xi\cdot \nabla_X b  \right). \]
The approximate solution satisfies 
\[ \Delta_{X,z}^\mu \Phi_a = f, \quad \Phi_a|_{z=0} = \varphi, \quad \partial_\nu^\mu \Phi_a|_{z = -b} = \nabla_X^\mu \Phi_a|_{z=-b}\cdot \nabla_X^\mu b, \]
where $f$ is given by
\[ f(X,z) = \frac{1}{(2\pi \mu)^2}\int_{\mathbb{R}^2} e^{iX\cdot \xi/\mu}R(X,z,\xi) \widehat{\varphi}_\mu(\xi)\mathrm{d}\xi. \]
We now consider the symbol $R$. First, note that since derivatives of $b$ vanish for $X \in R^2\setminus \tilde{\Gamma}_b$, we have $f(X,z) = 0$ for $X \in R^2\setminus \tilde{\Gamma}_b$, and so $f$ has compact support. Next, we have that $|\sinh(|\xi|z)|\text{sech}^2(b|\xi|) \leq \frac{1}{\cosh(b|\xi|)} \leq  C e^{-b|\xi|}$  for all $(X,z)$ and therefore $R \in S^{-\infty}$ and so $f \in C^\infty$. 
We now estimate $f$ pointwise. Set $r_{b,\alpha}(\xi) = \sinh(\alpha b |\xi|)\operatorname{sech}^2(b|\xi|)|\xi|^{-1}$ with $\alpha = z/b \in [-1,0]$ With the change of variables $\zeta = \xi/\mu$, we write 
{\small
\begin{equation*}
    f_\alpha(X) = \frac{\mu^3}{(2\pi)^2}\int_{\mathbb{R}^2} e^{iX\cdot \zeta} \left( \Delta_X b  - 2 |\nabla_X b|^2 |\zeta| \tanh(\mu b|\zeta|) + i2 (\nabla_X b \cdot \zeta) \right) r_{b,\alpha}(\mu\zeta)\mu\zeta \cdot \zeta  \hat{\varphi} \mathrm{d}\zeta
\end{equation*}
}
We estimate each term in the above expression, starting with the term $f_{\alpha,1}$ proportional to $\Delta_X b$. By the Fourier convolution theorem, we get 
\begin{align*}
f_{\alpha,1}(X,z) &= \frac{\mu^3\Delta_X b }{(2\pi)^2}\int_{\mathbb{R}^2}  e^{iX\cdot \zeta} r_{b,\alpha}(\mu\zeta) \mu \zeta\cdot \zeta \hat{\varphi}(\zeta) \mathrm{d}\zeta \\
&= -i \mu^3 \Delta_X b \left( (R_{b,\alpha,1} \ast \partial_{x_1} \varphi ) (X) + (R_{b,2} \ast \partial_{x_2} \varphi ) (X) \right)
\end{align*}

where \[ R_{b,\alpha,i}(Y) = \frac{1}{(2\pi)^2}\int_{\mathbb{R}^2} e^{iY\cdot \zeta}r_{b,\alpha}(\mu\zeta)\mu\zeta_i \mathrm{d}\zeta, \quad i = 1,2, \]
with $b = b(X)$ fixed. By Young's inequality, we have \[\|(R_{b,\alpha,i} \ast \partial_{x_1} \varphi )\|_{L^\infty} \leq \|R_{b,i}\|_{L^1} \|\partial_{x_i} \varphi \|_{L^\infty},\] and so we need to estimate $\|R_{b,\alpha,i}\|_{L^1}$. Using the fact that $\sinh(|x|)/|x|$ and $\operatorname{sech}^2(|x|)$ are analytic for all $x \in \R$ and that $r_{b,\alpha}(\zeta)$ exponentially decaying, we may conclude that $r_{b,\alpha}(\zeta) \in \mathcal{S}$, i.e., it is a Schwartz function. As $\mathcal{S}$ is closed under multiplication with polynomials, $r_{b,\alpha}(\zeta)\mu\zeta_i $ is also Schwartz function, and since $\mathcal{F}: \mathcal{S} \to \mathcal{S}$  it follows that $R_{b,\alpha,i} \in \mathcal{S}$ (Ch. 7, \cite{rudin1991functional}). Therefore, $R_{b,\alpha,i} \in L^1$. Moreover, using the change of variable $\zeta' = b\mu \zeta $ 
\[R_{b,\alpha,i}(Y) = \frac{1}{(\mu b)^2}G_\alpha(Y/(\mu b)) \quad \implies \quad  \|R_{b,\alpha,i}\|_{L^1} = C_\alpha,\]
i.e., $\|R_{b,\alpha,i}\|_{L^1}$ is independent of $b$ and $\mu$, but depends continuously on $\alpha \in [-1,0]$. 
We proceed similarly with 
\begin{align}
        f_{\alpha,2}(X) = \frac{-2\mu^3|\nabla_X b|^2}{(2\pi)^2}\int_{\mathbb{R}^2} e^{iX\cdot \zeta} |\zeta| \tanh(\mu b|\zeta|) r_{b,\alpha}(\mu\zeta)\zeta \cdot \mu\zeta  \hat{\varphi} \mathrm{d}\zeta.
\end{align}
It is straight forward to check that $|\zeta| \tanh(\mu b|\zeta|) r_{b,\alpha}(\mu\zeta)\zeta_i \in \mathcal{S}$. Overloading notation, we have now that 
\[R_{b,\alpha,i}(Y) = \frac{1}{(2\pi)^2}\int_{\mathbb{R}^2} e^{iY\cdot \zeta} |\zeta| \tanh(\mu b|\zeta|) r_{b,\alpha}(\mu\zeta)\zeta_i \mathrm{d}\zeta = \frac{1}{(\mu b)^3}\tilde{G}_\alpha(Y/(\mu b)).  \]
Hence, $\|R_{b,\alpha,i} \|_{L^1} = \frac{1}{\mu b}\tilde{C}_\alpha$, where is again independent of $b$ and $\mu$. We find $f_{\alpha,3}$ by the exact same procedure. In total we get that 

\[ |f(X,z)| = C_z \left( \mu^3 |\Delta_X b(X)| + \frac{\mu^2|\nabla_X b(X)| }{b} \left(1 + |\nabla_X b(X)| \right)\right)\|\nabla_X \varphi\|_{L^\infty},\]
where we have reintroduced the $z$-dependence through $C_z = C_\alpha = C_{z/b}$. \\
\noindent
A similar computation as above shows that 
At the bottom we have $\partial_z \Phi_a|_{z = -b} = 0$, and we define $ g(X) = \partial_\nu^\mu \Phi_a = \left(\nabla_X^\mu b \cdot \nabla_X^\mu \Phi_a\right)|_{z = -b(X)}.$ We have 
{\small
\[ \left(\nabla_X^\mu b \cdot \nabla_X^\mu \Phi_a\right)|_{z = -b(X)} = \frac{1}{(2\pi\mu)^2}\int_{\mathbb{R}^2} e^{iX\cdot \xi/\mu} \left(\frac{i\mu\nabla_X b \cdot \xi}{\cosh(b|\xi|)} + \mu^2\frac{|\nabla_X b|^2 |\xi| \sinh(b |\xi|)}{\cosh^2(b|\xi|)}\right)\widehat{\varphi}_\mu(\xi)\mathrm{d}\xi,\]
}
and by the same method as above, we get 
\[ |g(X)| \leq C\mu^2 |\nabla_X b(X)|(1 + \mu|\nabla_X b(X)|) \|\nabla_X \varphi\|_{L^\infty}. \]
Clearly, $g(X) = 0$ for $X \notin \tilde{\Gamma}_b$.   
We now consider the difference $\phi = \Phi - \Phi_a$, where $\Phi$ is the exact solution to \eqref{eq: DN-def}. $\phi$ satisfies 
\begin{equation}
    -\Delta_{X,z}^{\mu} \phi = f \quad \text{in } \Omega, \quad \phi|_{z=0}= 0, \quad \partial_\nu^\mu \phi|_{z = -b} = -g. \label{eq: diff_poisson2} 
\end{equation} 
We now follow the same strategy as in the proof of  \Cref{prop: G + K} to obtain pointwise bounds on $\phi$. To avoid working with the anisotropic Green's function, we note that if $\phi$ satisfies \eqref{eq: diff_poisson2} then $ \tilde{\phi}(X,z) = \phi(\mu X,z)$ satisfies  $\eqref{eq: diff_poisson2}$ with $\mu = 1$ and $\tilde{g}(X) = g(\mu X)$, $\tilde{f}(X,z) = f(\mu X,z)$ and $\tilde{b}(X) = b(\mu X)$. 

Let $G$ be the Green's function defined in \eqref{eq: greens}. In addition to the single- and double layer potentials in \eqref{eq: potential} used in proof of \Cref{prop: potpert} and \Cref{prop: G + K}, we introduce the volume potential   
\begin{align*}
    (V\tilde{f})(X,z) &= \int_{\tilde{\Gamma}_b\times (-\tilde{b}(X),0)} G(X,z,X',z') \tilde{f}(X',z') \mathrm{d}V(X',z'). 
\end{align*}
Using $V$ and $S$, a unique solution $\tilde{\phi}$ to \eqref{eq: diff_poisson2} (with $\mu = 1$) such that $ \phi = o(1)$ as $|X| \to \infty$ is given by
\[ \tilde{\phi}(X,z) = (V \tilde{f}) (X,z) + (S \rho) (X,z)  \quad \text{in } \Omega, \]
where $\rho$ is the solution to the Fredholm equation 
\begin{equation}
    \rho(X) - 2 D\rho (X) = - 2\left(\tilde{g}(X) + \partial_\nu (V \tilde{f}) (X,-\tilde{b}(X))\right) , \quad X \in \tilde{\Gamma}_b. 
\end{equation}
By the same consideration as in  \Cref{prop: G + K}, and since $L^\infty$ norms are invariant under scaling, we have $\|\partial_\nu (V \tilde{f})|_{z=-b} \|_{L^\infty} \leq C\|\tilde{f}\|_{L^\infty} \leq C\|f\|_{L^\infty}$, and  $\|\rho\|_{L^\infty} \leq C \left(\|g\|_{L^\infty} + \|f\|_{L^\infty}\right).$
For $\partial_z \tilde{\phi}|_{z = 0}$, Hölder's inequality gives
$\|\partial_z Vf|_{z=0}\|_{L^\infty} \leq C \|f\|_{L^\infty}$. For the bottom term, we also obtain additional dependence on the depth. For the Newtonian potential we have\footnote{Strictly speaking, $G^\mu$ is the sum the Newtonian potential and an harmonic function, but using the method of images it is not hard to show that the harmonic function does not alter the decay in our situation.}  ${\partial_z G \sim |(X-X',z-z')|^{-2} \leq b_{\text{\tiny{min}}}^{-2}}$, and since $\rho$ has compact support it follows that  \[\|\partial_z S \rho\|_{L^\infty} \leq b_{\text{\tiny{min}}}^{-2} C \|\rho \|_{L^\infty} \leq b_{\text{\tiny{min}}}^{-2}\tilde{C} \left(\|g\|_{L^\infty} + \|f\|_{L^\infty}\right).\]
As  $\| \partial_z \phi \|_{L^\infty} = \| \partial_z \tilde{\phi} \|_{L^\infty}$, we may therefore conclude that 
\begin{align*}
 \|\partial_z \phi|_{z = 0}\|_{L^\infty} & \leq  C\left(b_{\text{\tiny{min}}}^{-2}(\|f\|_{L^\infty} + \|g\|_{L^\infty}) + \|f\|_{L^\infty}\right),
\end{align*}
The result for $\Gkn$ now follows, since with $b_{\text{\tiny{min}}} \geq 1$ and $\|\Delta_X b\|_{L^\infty} \leq \delta$, the dominant term is  
\[ \|\partial_z \phi|_{z = 0}\|_{L^\infty} = \|\mathcal{G}^\mu(b)\varphi - \Gkn^\mu(b) \varphi \|_{L^\infty} \leq C\mu^2 \frac{\delta(1+ \delta)}{b_{\text{\tiny{min}}}} \|\nabla_X \varphi \|_{L^\infty} + \mathcal{O}(\mu^3)\] 
We now write $k(X,z,\xi) = \frac{\cosh(|\xi|(z + b(X))}{\cosh(|\xi|b(X+Y))} $, and write the potential corresponding to $\Gw^\mu$ as 
\[\Phi_{a,\text{\tiny{W}}}^\mu(X,z) = \frac{1}{(2\pi \mu)^2}\int_{\mathbb{R}^2} e^{iX\cdot \xi/\mu} k((X+Y)/2,z,\xi) \varphi(Y)\mathrm{d}Y \mathrm{d}\xi. \]
It is straight forward to check that $\Phi_{a,\text{\tiny{W}}}^\mu$ satisfies $\Phi_{a,\text{\tiny{W}}}^\mu|_{z=0} = \varphi$ and $\partial_z \Phi_{a,\text{\tiny{W}}}^\mu|_{z=-b} = 0$. We also have that 
\[ \Delta_X^\mu Op_{\text{\tiny{W}}}^\mu(k) = Op_{\text{\tiny{W}}}^\mu( -|\xi|^2k - i\mu \nabla_X k \cdot \xi + \frac{\mu^2}{4} \Delta_X k).\] 
From this it follows that 
\[ \Delta_{X,z}^\mu \Phi_{a,\text{\tiny{W}}}^\mu = Op_{\text{\tiny{W}}}^\mu(- i\mu \nabla_X k \cdot \xi + \frac{\mu^2}{4} \Delta_X k) \varphi.\]
We now use that for a symbol $a$, the Weyl and Kohn-Nirenberg quantizations are related through the formula  $Op_{\text{\tiny{W}}}^\mu(a) = Op_{\text{\tiny{KN}}}^\mu(e^{\frac{i\mu}{2} \nabla_X \cdot \nabla_\xi} a)$ (Ch. 4,\cite{zworski2012semiclassical}). Writing $R_{\text{\tiny{W}}} = - i\mu \nabla_X k \cdot \xi + \frac{\mu^2}{4} \Delta_X k$, this implies   
\[ \Delta_{X,z}^\mu \Phi_{a,\text{\tiny{W}}}^\mu  = Op_{\text{\tiny{KN}}}^\mu(R_{\text{\tiny{W}}}) \varphi + \frac{i\mu}{2}Op_{\text{\tiny{KN}}}^\mu(\nabla_\xi \cdot \nabla_X R_{\text{\tiny{W}}} ) \varphi + \mathcal{O}_{S^{-\infty}}(\mu^3). \]
where the remainder estimate follows from the fact that differentiation of $R(X,z,\xi)$ with respect to $X$ always results in a $S^{-\infty}$ symbol. We now note that $R_{\text{\tiny{W}}} \sim R $ (i.e., the symbols differ only by multiplicative constants). Moreover, by the same method used to estimate $f$ and a more tedious calculation, one finds that \\ $\|Op_{\text{\tiny{KN}}}^\mu(\nabla_\xi \cdot \nabla_X R_{\text{\tiny{W}}} ) \varphi\|_{L^\infty} = \mathcal{O}(\mu^3 \|\nabla_X \varphi\|_{L^\infty})$, and it follows that 
 \[ \|\Delta_{X,z}^\mu \Phi_{a,\text{\tiny{W}}}^\mu\|_{L^\infty} = \|f\|_{L^\infty} + \mathcal{O}(\mu^3).\]
By the same approach, we also find
\[ \|\nabla_X^\mu b \cdot \nabla_X^\mu \Phi_{a,\text{\tiny{W}}}^\mu |_{z=-b}\|_{L^\infty} = \|g\|_{L^\infty} + \mathcal{O}(\mu^3). \]
Consequently, the same analysis of the difference $\phi_{\text{\tiny{W}}} = \Phi^\mu - \Phi_{a,\text{\tiny{W}}}^\mu $ now results in 
\[ \|\mathcal{G}^\mu \varphi - \Gw^\mu \varphi \|_{L^\infty} = \| \phi_{\text{\tiny{W}}} \|_{L^\infty} \leq C\mu^2 \frac{\delta(1+ \delta)}{b_{\text{\tiny{min}}}} \|\nabla_X \varphi \|_{L^\infty}  + \mathcal{O}(\mu^3). \]

Last, we establish the self-adjointness of $\Gw^\mu(b)$. As $g_b \in S^1$ and real-valued it satisfies the criteria of Theorem 1 in \cite{fulsche2025simple}, and so $\Opw^\mu(b)$ is essentially self-adjoint. Therefore, it has a unique, self-adjoint realization $\Gw^\mu(b)$ on $L^2$ (cf. Ch. 2,\cite{teschl2014mathematical}), with domain 
\[\mathcal{D}(\Gw^\mu(b)) = \{u \in L^2 : \Gw^\mu(b)u \in L^2 \}.\]
As $\Gw^\mu(b) : H^s \to H^{s-1}$, we have $H^1 \subset \mathcal{D}(\Gw^\mu(b)).$  Next, we write $g_b(X,\xi) = \bar{g}(\xi)  +  r(X,\xi)$ with $\bar{g}(\xi) = |\xi|\tanh(b_{\text{\tiny{max}}}|\xi|)$ and $ r(X,\xi) = |\xi|\tanh(b(X)|\xi|)- |\xi|\tanh(b_{\text{\tiny{max}}}|\xi|).$ 
Using that $\tanh(x) = 1 - 2e^{-2x} + \mathcal{O}(e^{-4x})$ we see that  \[r(X,\xi) = -|\xi|2e^{-2b(X)|\xi|}(1 -e^{-2(b_{\text{\tiny{max}}} -b(X))|\xi|}) + \mathcal{O}(e^{-4b(X)|\xi|}) \implies r(X,\xi) \in S^{-\infty}. \] 
For the translation invariant symbol $\bar{g}$ we have $\Opw^\mu(\bar{g}) = \Opkn^\mu(\bar{g})$, and it follows that $\Gw^\mu(b) = \Opkn^\mu(\bar{g}) + R$ with $R \in OPS^{-\infty}$. By the standard theory of elliptic PDOs (cf. Ch. 5.4 in \cite{alinhac2007pseudo}), there exists an approximate inverse $Q: H^s \to H^{s+1}$ such that $ Q\Opkn^\mu(\bar{g}) = I + S$ with $S \in OPS^{-\infty}$. Now, assume $\Gw^\mu(b)u  = v \in L^2$. Then $\Opkn^\mu(\bar{g})u = v - Ru $ and $u = Q v - QRu - Su $, and since $Su, Ru \in H^\infty$ we must have $u \in H^1$. Consequently, $\mathcal{D}(\Gw^\mu(b)) \subset H^1$. 
\end{proof}

\bibliographystyle{plain}
\bibliography{references}

\end{document}